\pdfoutput=1
\documentclass[12pt]{article}
\usepackage{lineno,hyperref}
\usepackage{mathtools}
\usepackage{amssymb}
\usepackage{eqnarray,amsthm}
\usepackage{enumerate}
\usepackage{arydshln}
\usepackage{chngpage}
\usepackage{tikz} 
\usepackage{tikz-cd}
\usepackage{fancyhdr}
\usepackage[short, nodayofweek]{datetime}
\pagestyle{fancyplain}

\fancyhf{}
\lhead{ \fancyplain{}{\rightmark}}
\rhead{ \fancyplain{}{\thepage}}
\rfoot{ \fancyplain{}{\tiny Compiled on \today\ at \currenttime} }
\lfoot{ \fancyplain{}{\tiny Kayvan Nejabati Zenouz \\ \thanks{Knejabat@ed.ac.uk}}}
\makeatletter
\@addtoreset{chapter}{part}
\makeatother

\newenvironment{manualtheorem}[1]{%
	\manualtheoreminner
}{\endmanualtheoreminner}
\newtheorem{theorem}{Theorem}[section]
\newtheorem{lemma}[theorem]{Lemma} 
\newtheorem{proposition}[theorem]{Proposition} 
\newtheorem{corollary}[theorem]{Corollary} 
\theoremstyle{definition}
\newtheorem{definition}[theorem]{Definition}
\theoremstyle{remark}
\newtheorem{remark}[theorem]{Remark}

\DeclareMathOperator{\Ima}{Im}

\DeclareMathOperator{\Ker}{Ker}
\setcounter{tocdepth}{2}









\title{
	Continuous Cocycles Endowed with Point-Open Topology\thanks{The author obtained the results in this paper while studying for a PhD degree at the University of Exeter funded by an EPSRC Doctoral Training Grant. This research was partially supported by the ERC Advanced grant 320974.} 
}
\author{Kayvan Nejabati Zenouz\thanks{School of Mathematics, University of Edinburgh, 5605, JCMB, Edinburgh, EH9 3FD, Email: knejabat@ed.ac.uk, Website: \href{http://www.maths.ed.ac.uk/school-of-mathematics/people?person=644}{http://www.maths.ed.ac.uk/school-of-mathematics/people?person=644}}}
\date{\today}
\begin{document}
	\maketitle
	\begin{abstract}
	Given a topological group $ G $ and a Hausdorff topological group $ A $ on which $ G $ acts continuously and compatibly with the group operation of $ A $, we study the set of continuous cocycles of $ G $ with value in $ A $. This set is a function space and can be endowed with several topologies. By imposing a suitable function space topology on the set of cocycles of $ G $ with value in $ A $, we propose a topological study of this set, and we prove, as our first main result, that if $ A $ is a compact group having a presentation as an inverse limit of compact and Hausdorff topological groups $ A_{r} $, for $ r $ in a directed poset $ R $, on which $ G $ acts continuously and compatibly with the group operation of $ A_{r} $ and equivariantly with respect to the transition maps, then one has a natural identification between the first nonabelian cohomology set of $ G $ with coefficients in the inverse limit $ A $ and the inverse limit of first nonabelian cohomology sets of $ G $ with coefficients in $ A_{r} $. Furthermore, we prove, as our second main result, that if $ G $ is compact and Hausdorff, and $ A $ is abelian - therefore one can define cohomology groups for all $ n\geq1 $ - then under a certain condition on $ A_{r} $ one has a natural identification between cohomology group with coefficients in the inverse limit $ A $ and the inverse limit of cohomology groups with coefficients in $ A_{r} $, for all $ n\geq1 $.
	\end{abstract}
	\tableofcontents{}
	\section{Introduction}\label{S1}
	It is a theorem cf. \cite[p.~141, (2.7.5) Theorem]{MR2392026} that if $ G $ is a profinite group, and $ A $ is a compact $ G $-module having a presentation $ A=\varprojlim_{n \in \mathbb{N}}A_{n} $ as a countable inverse limit of finite discrete $ G $-modules $ A_{n} $, then there exist a natural exact sequence  \[ 
	\begin{tikzcd}[row sep=1.2em , column sep=1.2em]
	1 \arrow{r}[]{}\arrow{r}{}[swap]{} & \varprojlim_{n \in \mathbb{N}}^{1}H^{i-1}(G,A_{n})   \arrow{r}[]{} & H^{i}_{\text{cts}}(G,A) \arrow{r}[]{} & \varprojlim_{n \in \mathbb{N}}H^{i}(G,A_{n}) \arrow{r}[]{} & 1; 
	\end{tikzcd} \] 
	however, in the investigations presented in \cite[p.~136, \S7]{MR2392026} it has not been taken into consideration the fact that there are several natural ways to make $ H^{i}_{\text{cts}}(G,A) $ into a topological space, and to study some of the topological properties of $ H^{i}_{\text{cts}}(G,A) $; hence the fundamental aim of this paper is to investigate generalisations of the above theorem by introducing a suitable topology on the groups $ H^{i}_{\text{cts}}(G,A) $, and studying their topological properties. However, we first begin by studying cohomology with coefficients in nonabelian groups, and then move onto cohomology with coefficients in abelian groups, and eventually, as a corollary of our results, we give a relationship between $ G^{i-1} $ and $ A_{n} $ for all $ n $ under which one has $ \varprojlim_{n \in \mathbb{N}}^{1}H^{i-1}(G,A_{n})=1 $ on the exact sequence above. 
	
	We start with a continuous action of a topological group $ G $ on a Hausdorff topological group $ A $ compatible with the group operation of $ A $ (in such case we shall refer to $ A $ as a $ G $-group), and we study the properties of the topological space $ Z^{1}_{\text{cts}}(G,A) $, the set of continuous 1-cocycles of $ G $ in $ A $, endowed with the point-open topology (aka the topology of pointwise convergence); we also study the quotient space $ H^{1}_{\text{cts}}(G,A) $, of cohomology classes of continuous cocycles of $ G $ in $ A $, endowed with the quotient topology. In particular, investigating the properties of the quotient space $ H^{1}_{\text{cts}}(G,A) $ we prove, as our first main result the following.
	\begin{manualtheorem}{1}
		Let $ A $ be a compact $ G $-group. Assume $ A $ has a presentation $ A= \varprojlim_{\stackrel{ \ }{r\in R}}A_{r} $. Denote by $ \varphi_{r}: A\longrightarrow A_{r} $ the natural projections. Then there exists a natural continuous bijection \[ \Theta : H_{\text{cts}}^{1}(G,A)_{\text{po}} \longrightarrow \varprojlim_{\stackrel{ \ }{r\in R}} H_{\text{cts}}^{1}(G,A_{r})_{\text{po}} \ \ \text{given by} \ \ [a]\longmapsto ([\varphi_{r\ast}(a)]). \]
		In particular, if $ G $ is a compact (even locally compact) and Hausdorff space, and $ A $ is evenly continuous with respect to $ G $, then $ \Theta $ is a homeomorphism.
	\end{manualtheorem}

    That is if $ A=\varprojlim_{\stackrel{ \ }{r \in R}}A_{r} $ is given as an inverse limit of compact topological $ G $-groups $ A_{r} $, for $ r $ in a directed poset $ R $, then there exists a natural continuous bijection of pointed sets \[ \Theta=\Theta_{1} : H^{1}_{\text{cts}}(G,A)_{\text{po}}\longrightarrow \varprojlim_{\stackrel{ \ }{r \in R}}H^{1}_{\text{cts}}(G,A_{r})_{\text{po}},\] 
	which is a homeomorphism, when $ G $ a is compact (even locally compact) and Hausdorff group and the set of continuous maps from $ G  \longrightarrow A_{r} $ is an evenly continuous set for each $ r \in R $.
	
	Next we apply our methodology to obtain some results in abelian cohomology. In the case where $ A $ is a compact abelian $ G $-group, i.e., $ A $ is a compact $ G $-module, for a fixed $ n\geq 1 $, we study the group of $ n $-cocycles of $ G $ in $ A $ endowed with the point-open topology, and we prove if $ A=\varprojlim_{\stackrel{ \ }{r \in R}}A_{r} $ is given as above, $ G $ is compact and Hausdorff, and when the set of continuous function from $ G^{n-1}\longrightarrow A_{r} $ is evenly continuous for all $ r \in R $, then that there exists a continuous group isomorphism \[ \Theta_{n} : H^{n}_{\text{cts}}(G,A)_{\text{po}}\longrightarrow \varprojlim_{\stackrel{ \ }{r \in R}}H^{n}_{\text{cts}}(G,A_{r})_{\text{po}},\] 
	which is a homeomorphism, when the set of continuous function from $ G^{n}\longrightarrow A_{r} $ is evenly continuous for all $ r \in R $. More precesely we prove as our second main theorem the following.
	\begin{manualtheorem}{2}
		Let $ A $ a compact $ G $-module and fix $ n\geq 1 $. Assume $ A $ has a presentation $ A= \varprojlim_{\stackrel{ \ }{r\in R}}A_{r} $, where $ A_{r} $ is evenly continuous with respect to $ G^{n-1} $ for all $ r\in R $. Denote by $ \varphi_{r}: A\longrightarrow A_{r} $ the natural projections. Then there exists a natural continuous bijection \[ \Theta_{n} : H_{\text{cts}}^{n}(G,A)_{\text{po}} \longrightarrow \varprojlim_{\stackrel{ \ }{r\in R}} H_{\text{cts}}^{n}(G,A_{r})_{\text{po}} \ \ \text{given by} \ \ [a]\longmapsto ([\varphi_{r\ast}(a)]). \]
		In particular, if $ A $ is evenly continuous with respect to $ G^{n} $, then $ \Theta_{n} $ is a homeomoprhism.
	\end{manualtheorem} 
	
	This document is organised as follows. In Section \ref{S2} we provide background information relating to function spaces, and we develop a general theory for studying the topological space of continuous 1-cocycles and the first nonabelian continuous cohomology space endowed with point-open topology. We prove several topological properties of these spaces; we also briefly discuss the, well-known, one-to-one correspondence between the set of continuous 1-cocycles and the set of continuous torsors. In Section \ref{S3} we focus on torsors under a compact and Hausdorff topological group endowed with the point-open topology and provide the proof for our first main theorem, we also we discuss two immediate applications of our first main theorem. Finally, in Section \ref{S4} we investigate the abelian situation, we prove our second main theorem, and provide an application of this theorem.  
	\section{Continuous Cocycles and Torsors}\label{S2}
	\subsection{Function Spaces}\label{SB1}
	Let $ A $ and $ B $ be topological spaces. Denote by $ M(B,A) $ the set of all maps from $ B \longrightarrow A $. As a set we have $ M(B,A)=\prod_{b \in B}A_{b} $ i.e., $ M(B,A) $ is the product of copies of $ A $ one for each $ b \in B $. It is known that the set $ M(B,A) $ can be endowed with several topologies. For example $ M(B,A) $ can be endowed with the compact-open topology, the point-open topology, and if $ A $ is a uniform space; e.g., a metric space or a topological group, then the set $ M(B,A) $ can also be endowed the topology of uniform convergence, or the topology of compact convergence. 
	
	A subbase for compact-open topology on $ M(B,A) $ is given by subsets of the form $ \beta(K,V)\stackrel{\mathrm{def}}{=}\{f\in M(B,A): f(K) \subseteq V \} $, where $ K \subseteq B $ is a compact subspace and $ V \subseteq A $ is an open subset. Therefore, an element of the base for the compact-open topology on $ M(B,A) $ is given by a finite intersection of elements of the subbase cf. \cite[p.~221]{MR0370454}. We shall write $ M(B,A)_{\text{co}} $ for the set $ M(B,A) $ endowed with compact-open topology. 
	
	A coarser/smaller topology than compact-open on $ M(B,A) $ is the point-open topology. The point-open topology on $ M(B,A) $ is obtained by taking as subbase subsets of the form $ \beta(\{b\},V) $, for a point $ b \in B $ and an open set $ V \subseteq A $.  We shall denote the set $  M(B,A) $ endowed with point-open topology by $  M(B,A)_{\text{po}} $. In particular, if we let \[ \varepsilon_{b}  : M(B,A)_{\text{po}} \longrightarrow A \ \  \text{be the map given by} \ \  f \longmapsto f_{b}\stackrel{\mathrm{def}}{=} f(b), \] which is the natural projections, then we have a natural homeomorphism $  M(B,A)_{\text{po}}\cong\prod_{b \in B}A_{b} $, where $ \prod_{b \in B}A_{b} $ is endowed with product (Tychonoff) topology cf. \cite[p.~217, Chapter 7]{MR0370454}. As such the map $ \varepsilon_{b} $ is continuous for all $ b \in B $ since for any open subset $ V \subset A $ we have $ \varepsilon_{b}^{-1}(V)=\beta(\{b\},V) $, which is an open subset of $  M(B,A)_{\text{po}} $.\footnote{When $ A $ is a uniform space see \cite[p.~226]{MR0370454} for the definition of the topology of uniform convergence on $ M(B,A) $, and see \cite[p.~229]{MR0370454} for the definition of the topology of compact convergence on $ M(B,A) $.} 
	
	Since the point-open topology on $ M(B,A) $ is coarser than the compact-open topology, the map  $  \text{id} : M(B,A)_{\text{co}} \longrightarrow M(B,A)_{\text{po}} $ is continuous, and for all $ b \in B $, we have a commutative diagram of continuous maps  \[ 
	\begin{tikzcd}[row sep=1.2em , column sep=1.5em]
	M(B,A)_{\text{co}} \arrow{r}[]{\text{id}}\arrow{dr}{}[swap]{\varepsilon_{b}\text{id}} & M(B,A)_{\text{po}}   \arrow{d}[]{\varepsilon_{b}} \\ \ & A.  
	\end{tikzcd} \]
	
	We denote by $ M_{\text{cts}}(B,A) $ the subset of $ M(B,A) $ containing all the continuous maps from $ B\longrightarrow A $. For any subset $ F=F(B,A) \subseteq M(B,A)  $ we let \[  F_{\text{cts}}=F_{\text{cts}}(B,A)\stackrel{\mathrm{def}}{=} F \cap M_{\text{cts}}(B,A). \] We further denote by $ M_{\text{cts}}(B,A)_{\text{po}} $ the set $ M_{\text{cts}}(B,A) $ together with the subspace topology inherited from $ M(B,A)_{\text{po}} $; similarly, we denote by $ M_{\text{cts}}(B,A)_{\text{co}} $ the set $ M_{\text{cts}}(B,A) $ together with the subspace topology inherited from $ M(B,A)_{\text{co}} $.
	
	For the rest of the document in places we shall use the word ''space'' by which we shall always mean a ''topological space'' when the topology under consideration is understood.
	
	We note that the point-open topology on $ M(B,A) $ is completely determined by the topology of $ A $, so by working with this topology we are often ignoring the information offered by the topology of $ B $ i.e., we are assuming $ B $ is a set; therefore if one would like to take into consideration the information offered by the topology of $ B $, then the compact-open topology provides this opportunity. It also happens that these two topologies coincide for many cases; for example, if $ A $ is a Hausdorff space, then one finds that $ M(B,A)_{\text{po}} $ is a Hausdorff space; now in a given situation when $ M(B,A)_{\text{co}} $ is a compact space, then it follows that the map $ \text{id} $ is a homeomorphism, and therefore one has $ M(B,A)_{\text{co}}\cong M(B,A)_{\text{po}} $.
	
	We mainly focus on working with the space $  M(B,A)_{\text{po}} $; however, most of our investigations can also be carried out when replacing the point-open topology with the compact-open topology. We first recall a few basic facts relating to point-open topology.    
	\begin{lemma}\label{L1}
		Let $ B $ be a topological space. Assume $ \varphi : A \longrightarrow A' $ is a continuous map between topological spaces. Then the map \[ \varphi_{\ast} : M(B,A)_{\text{po}} \longrightarrow M(B,A')_{\text{po}} \ \ \text{given by} \ \ f\longmapsto \varphi f \stackrel{\mathrm{def}}{=} \varphi\circ f \ \ \text{is continuous.}\]  In particular, the map $ \varphi_{\ast} $ restricts to a continuous map (also denoted by $ \varphi_{\ast} $) \[ \varphi_{\ast}: M_{\text{cts}}(B,A)_{\text{po}} \longrightarrow M_{\text{cts}} (B,A')_{\text{po}}. \] 
	\end{lemma}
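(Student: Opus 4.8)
The plan is to use the identification $M(B,A)_{\text{po}}\cong\prod_{b\in B}A_b$ discussed just before the statement, which reduces the assertion about $\varphi_\ast$ to a statement about maps of product spaces. First I would observe that under the homeomorphisms $M(B,A)_{\text{po}}\cong\prod_{b\in B}A_b$ and $M(B,A')_{\text{po}}\cong\prod_{b\in B}A'_b$, the map $\varphi_\ast$ is precisely the product map $\prod_{b\in B}\varphi$, i.e. the map sending $(a_b)_{b\in B}$ to $(\varphi(a_b))_{b\in B}$. Indeed, $\varepsilon_b(\varphi_\ast f)=\varphi(f(b))=\varphi(\varepsilon_b f)$, so $\varepsilon_b\circ\varphi_\ast=\varphi\circ\varepsilon_b$ for every $b\in B$. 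Since the product topology is initial with respect to the projections $\varepsilon_b$, to prove $\varphi_\ast$ continuous it suffices to check that each composite $\varepsilon_b\circ\varphi_\ast$ is continuous; but $\varepsilon_b\circ\varphi_\ast=\varphi\circ\varepsilon_b$ is a composite of the continuous projection $\varepsilon_b$ with the continuous map $\varphi$, hence continuous. This gives continuity of $\varphi_\ast$ on $M(B,A)_{\text{po}}$.

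Alternatively — and this is the more self-contained route I would actually write out — I would verify continuity directly on the subbasis for the point-open topology on the target. A subbasic open set of $M(B,A')_{\text{po}}$ has the form $\beta(\{b\},V')=\{g\in M(B,A'):g(b)\in V'\}$ for $b\in B$ and $V'\subseteq A'$ open. Its preimage under $\varphi_\ast$ is $\{f\in M(B,A):\varphi(f(b))\in V'\}=\{f:f(b)\in\varphi^{-1}(V')\}=\beta(\{b\},\varphi^{-1}(V'))$, which is subbasic open in $M(B,A)_{\text{po}}$ because $\varphi^{-1}(V')$ is open in $A$ by continuity of $\varphi$. Since preimages of subbasic opens are open, $\varphi_\ast$ is continuous.

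For the "in particular" clause, I note that if $f:B\to A$ is continuous then $\varphi\circ f:B\to A'$ is continuous as a composite of continuous maps, so $\varphi_\ast$ maps $M_{\text{cts}}(B,A)$ into $M_{\text{cts}}(B,A')$. Since $M_{\text{cts}}(B,A)_{\text{po}}$ and $M_{\text{cts}}(B,A')_{\text{po}}$ carry the subspace topologies from $M(B,A)_{\text{po}}$ and $M(B,A')_{\text{po}}$ respectively, the restriction of a continuous map to a subspace, with codomain corestricted to a subspace containing the image, is again continuous; this yields the stated restricted map $\varphi_\ast:M_{\text{cts}}(B,A)_{\text{po}}\to M_{\text{cts}}(B,A')_{\text{po}}$.

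There is no real obstacle here: the only point requiring any care is the bookkeeping identity $\varepsilon_b\circ\varphi_\ast=\varphi\circ\varepsilon_b$ (equivalently, the explicit preimage computation $\varphi_\ast^{-1}(\beta(\{b\},V'))=\beta(\{b\},\varphi^{-1}(V'))$), and the observation that the point-open topology is generated by the subbasis $\{\beta(\{b\},V)\}$, so that checking continuity on subbasic opens suffices. Everything else is a routine composition-of-continuous-maps argument.
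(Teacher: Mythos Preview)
Your proposal is correct and your ``self-contained route'' is exactly the paper's argument: compute $\varphi_\ast^{-1}(\beta(\{b\},V'))=\beta(\{b\},\varphi^{-1}(V'))$ on subbasic opens, then handle the restriction to continuous maps by composition and the subspace property. The product-space/initial-topology framing you give first is a pleasant alternative viewpoint, but it is not used in the paper; the subbasis computation alone suffices.
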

	\begin{proof}
		To show the continuity of $ \varphi_{\ast} $, it suffices to check that the inverse image of an element of the subbase for the point-open topology on $ M (B,A') $ is an open subset of $ M(B,A)_{\text{po}} $. Fix a point $ b \in B $ and an open subset $ V \subseteq A' $. Note since $ \varphi $ is continuous $ \varphi^{-1}(V) \subseteq A $ is an open subset. Let $ \beta(\{b\},V) \subseteq M(B,A')_{\text{po}}  $ be an element of the subbase for the point-open topology on $ M(B,A') $. Then we have \[\varphi_{\ast}^{-1}(\beta(\{b\},V))= \{f \in M(B,A):  \varphi (f(b)) \in V \}; \]
		it follows that \[\varphi_{\ast}^{-1}(\beta(\{b\},V))=\beta(\{b\},\varphi^{-1}(V))\]
		since if $ f \in \varphi_{\ast}^{-1}(\beta(\{b\},V)) $, then $  \varphi (f(b)) \in V $ so $ f(b) \in \varphi^{-1}(V)  $ i.e., $ f \in \beta(\{b\},\varphi^{-1}(V))  $. Conversely, if $ g \in \beta(\{b\},\varphi^{-1}(V)) $, then $ g(b ) \in \varphi^{-1}(V) $, and so \[ \varphi (g(b ))=\varphi g(b) \in \varphi(\varphi^{-1}(V)) \subseteq V  \] i.e., $ \varphi g=\varphi_{\ast}(g)\in \beta(\{b\},V)  $ so $ g \in \varphi_{\ast}^{-1}(\beta(\{b\},V))  $. 
		
		Now since $ \beta(\{b\},\varphi^{-1}(V)) \subseteq M(B,A)_{\text{po}} $ is an element of the subbase for the topology, which follows from continuity of $ \varphi $, we conclude that $ \varphi_{\ast} $ is a continuous map. In particular, since the composition of two continuous maps is a continuous map; if $ f \in M_{\text{cts}}(B,A)_{\text{po}} $, then $ \varphi f \in M_{\text{cts}} (B,A')_{\text{po}} $; now the second statement follows since restriction of a continuous map to a subspace is a continuous map cf. \cite[p.~85]{MR0370454}, and this proves the lemma. 
	\end{proof}
	\begin{lemma}\label{L2}
		Let $ A $ be a topological space. Assume $ \psi: B\longrightarrow B' $ is a continuous map between topological spaces. Then the map \[ \psi^{\ast}: M(B',A)_{\text{po}} \longrightarrow M (B,A)_{\text{po}} \ \ \text{given by} \ \ f\longmapsto f\psi \stackrel{\mathrm{def}}{=} f\circ \psi  \ \ \text{is continuous.} \]  In particular, the map $ \psi^{\ast} $ restricts to a continuous map (also denoted by $ \psi^{\ast} $) \[ \psi^{\ast}: M_{\text{cts}}(B',A)_{\text{po}} \longrightarrow M_{\text{cts}} (B,A)_{\text{po}}. \]   
	\end{lemma}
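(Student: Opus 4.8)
The plan is to mirror the proof of Lemma~\ref{L1}, working with the subbase for the point-open topology. To verify that $\psi^{\ast}$ is continuous it suffices to show that the preimage under $\psi^{\ast}$ of each element of the subbase for the point-open topology on $M(B,A)$ is open in $M(B',A)_{\text{po}}$. So I would fix a point $b\in B$ and an open subset $V\subseteq A$, and consider the subbasic set $\beta(\{b\},V)\subseteq M(B,A)_{\text{po}}$.

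The key (and essentially only) computation is the identity
\[ (\psi^{\ast})^{-1}\bigl(\beta(\{b\},V)\bigr)=\beta(\{\psi(b)\},V). \]
Unwinding the definitions: $f\in(\psi^{\ast})^{-1}(\beta(\{b\},V))$ means $\psi^{\ast}(f)=f\circ\psi\in\beta(\{b\},V)$, i.e.\ $(f\circ\psi)(b)=f(\psi(b))\in V$, i.e.\ $f\in\beta(\{\psi(b)\},V)$; and every step is reversible. Since $\psi(b)$ is a point of $B'$ and $V$ is open in $A$, the set $\beta(\{\psi(b)\},V)$ is by definition an element of the subbase for the point-open topology on $M(B',A)$, hence open. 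This establishes continuity of $\psi^{\ast}:M(B',A)_{\text{po}}\longrightarrow M(B,A)_{\text{po}}$. (Alternatively, one could invoke the homeomorphism $M(B,A)_{\text{po}}\cong\prod_{b\in B}A_{b}$ and note that $\psi^{\ast}$ is precisely the map of product spaces induced by the reindexing $b\mapsto\psi(b)$, which is continuous because each of its coordinates is a projection.)

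For the second statement, I would observe that if $f\in M_{\text{cts}}(B',A)_{\text{po}}$, then, $\psi$ being continuous, the composite $f\circ\psi$ is continuous, so $\psi^{\ast}$ carries $M_{\text{cts}}(B',A)$ into $M_{\text{cts}}(B,A)$; the restriction of the continuous map $\psi^{\ast}$ to the subspace $M_{\text{cts}}(B',A)_{\text{po}}$, with codomain corestricted to the subspace $M_{\text{cts}}(B,A)_{\text{po}}$, is then continuous by the standard fact that a restriction of a continuous map to a subspace is continuous cf.~\cite[p.~85]{MR0370454}. This completes the argument.

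There is no real obstacle in this lemma; it is a direct unwinding of definitions, strictly dual to Lemma~\ref{L1}. The only point worth flagging is that continuity of $\psi$ is not needed for the first statement — that holds for an arbitrary set map $\psi$ — and is used solely to guarantee that precomposition with $\psi$ preserves continuity of functions in the ``in particular'' part.
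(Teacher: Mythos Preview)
Your proof is correct and follows essentially the same approach as the paper: both establish the identity $(\psi^{\ast})^{-1}(\beta(\{b\},V))=\beta(\{\psi(b)\},V)$ and then invoke that restrictions of continuous maps to subspaces are continuous. Your additional observations (the product-space alternative and the remark that continuity of $\psi$ is only used for the ``in particular'' clause) are correct and go slightly beyond what the paper records.
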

	\begin{proof}
		Similarly to Lemma \ref{L1}, we check the continuity of $ \psi^{\ast} $ on the elements of the subbase for the topology of $ M(B,A)_{\text{po}} $. Fix a point $ b \in B $ and an open subset $ V \subseteq A $. Let $ \beta(\{b\},V) \subseteq M(B,A)_{\text{po}}  $ be an element of the subbase for the point-open topology on $ M(B,A) $. Then we have \[ \psi^{\ast} \ ^{-1}(\beta(\{b\},V))= \{f \in M(B',A): f(\psi(b))\in V \};  \]
		it follows that
		\[ \psi^{\ast} \ ^{-1}(\beta(\{b\},V))= \beta(\{\psi(b)\},V) \]
		since if $ f \in \psi^{\ast} \ ^{-1}(\beta(\{b\},V))  $, then $ f\psi(b) \in V $ so $ f \in \beta(\{\psi(b)\},V) $. Conversely, if $ g \in \beta(\{\psi(b)\},V) $, then $ g\psi(b) \in V $, and so $ g\psi=\psi^{\ast}(g) \in \beta(\{b\},V)  $; hence, $ g \in \psi^{\ast} \ ^{-1}(\beta(\{b\},V)) $. 
		
		Now since $ \beta(\{\psi(b)\},V) \subseteq M(B',A)_{\text{po}} $ is an element of the subbase for the topology, we conclude that $ \psi^{\ast} $ is a continuous map. In particular, since the composition two continuous maps is a continuous map, if $ f \in M_{\text{cts}}(B',A)_{\text{po}} $, then $ f\psi \in M_{\text{cts}} (B,A)_{\text{po}} $. Now the second statement follows since restriction of a continuous map to a subspace is a continuous map cf. \cite[p.~85]{MR0370454}, and this proves the lemma.
	\end{proof}
	\begin{lemma}\label{L3}
		Suppose $ A $ and $ B $ are topological spaces. 
		\begin{enumerate}[i)]
			\item If $ A $ is a Hausdorff space, then the spaces $ M(B,A)_{\text{po}} $ and $ M_{\text{cts}}(B,A)_{\text{po}} $ are Hausdorff.
			\item If $ A $ is a compact space, then $ M(B,A)_{\text{po}} $ is a compact topological space.
		\end{enumerate} 
	\end{lemma}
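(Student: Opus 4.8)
The plan is to reduce both statements to the homeomorphism $M(B,A)_{\text{po}}\cong\prod_{b\in B}A_{b}$ recorded above, after which each part becomes an application of a classical theorem about product spaces.

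For part (i), I would first recall that an arbitrary product of Hausdorff spaces is Hausdorff: given $f\neq g$ in $M(B,A)$, there is a point $b\in B$ with $f(b)\neq g(b)$; since $A$ is Hausdorff, choose disjoint open sets $U\ni f(b)$ and $V\ni g(b)$ in $A$. Then $\varepsilon_{b}^{-1}(U)=\beta(\{b\},U)$ and $\varepsilon_{b}^{-1}(V)=\beta(\{b\},V)$ are disjoint open subsets of $M(B,A)_{\text{po}}$ separating $f$ and $g$, so $M(B,A)_{\text{po}}$ is Hausdorff. Since $M_{\text{cts}}(B,A)_{\text{po}}$ carries the subspace topology inherited from $M(B,A)_{\text{po}}$, and any subspace of a Hausdorff space is Hausdorff, the second assertion of (i) follows at once.

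For part (ii), I would invoke Tychonoff's theorem: an arbitrary product of compact spaces is compact. Since $M(B,A)_{\text{po}}\cong\prod_{b\in B}A_{b}$ with each factor $A_{b}$ a homeomorphic copy of the compact space $A$, the product, and hence $M(B,A)_{\text{po}}$, is compact. It is worth noting explicitly that Tychonoff's theorem for an index set $B$ of arbitrary cardinality rests on the axiom of choice; this is the only nontrivial ingredient anywhere in the lemma.

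There is no genuine obstacle here: once the identification with the Tychonoff product is in hand, both parts are standard. The only points deserving care are that the subbasic open sets $\beta(\{b\},V)$ of the point-open topology match the subbasic open sets $\varepsilon_{b}^{-1}(V)$ of the product topology — which is exactly the content of the homeomorphism already established — and that the proof of (ii) depends on Tychonoff's theorem. Note also that (ii) requires no separation hypothesis on $A$, and correspondingly no compactness is claimed for the subspace $M_{\text{cts}}(B,A)_{\text{po}}$, which in general need not be closed in $M(B,A)_{\text{po}}$.
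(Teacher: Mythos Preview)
Your proposal is correct and follows essentially the same approach as the paper: both reduce to the identification $M(B,A)_{\text{po}}\cong\prod_{b\in B}A_{b}$, then invoke that products of Hausdorff spaces are Hausdorff (and subspaces inherit this) for part (i), and Tychonoff's theorem for part (ii). The only difference is that you spell out the separating-neighbourhoods argument for the Hausdorff product explicitly, whereas the paper simply cites the result; your additional remarks on the axiom of choice and on why no compactness claim is made for $M_{\text{cts}}(B,A)_{\text{po}}$ are helpful context but not part of the paper's proof.
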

	\begin{proof}
		Note we have a natural identification $  M(B,A)_{\text{po}}\cong\prod_{b \in B}A_{b} $, and so $ M(B,A)_{\text{po}} $ is a product of Hausdorff spaces; therefore it is a Hausdorff space cf. \cite[p.~ 87, 13.8 Theorem. b)]{MR2048350}. Now since $ M_{\text{cts}}(B,A)_{\text{po}} \subseteq M(B,A)_{\text{po}} $ is a subspace of a Hausdorff space, we conclude that $ M_{\text{cts}}(B,A)_{\text{po}} $ is a Hausdorff space cf. \cite[p.~87, 13.8 Theorem. a)]{MR2048350}, and this proves i). 
		
		If $ A $ is a compact space, then since $  M(B,A)_{\text{po}}\cong\prod_{b \in B}A_{b} $ is a product of compact spaces, we conclude that $  M(B,A)_{\text{po}} $ is a compact space cf. \cite[p.~120, 17.8 Theorem (Tychonoff)]{MR2048350}, and this proves ii). 
	\end{proof}
	
	We shall fix a directed poset $ R $ and assume $ \{A_{r}, \varphi_{rs}, R\} $ is an inverse system of topological spaces see \cite[p.~1, for definition]{MR2599132} for the rest of the article
	\begin{lemma}\label{L4}
		Let $ A $ and $ B $ be topological spaces. Assume $ A $ has a presentation $ A= \varprojlim_{\stackrel{ \ }{r\in R}}A_{r} $. Denote by $ \varphi_{r}: A\longrightarrow A_{r} $ the natural projection. Then there exists a continuous bijection \[ \theta : M_{\text{cts}}(B,A)_{\text{po}} \longrightarrow \varprojlim_{\stackrel{ \ }{r\in R}} M_{\text{cts}}(B,A_{r})_{\text{po}} \ \ \text{given by} \ \ f\longmapsto(\varphi_{r\ast}(f)). \] In particular, if $ A $ is a compact space and $ A_{r} $ is Hausdorff for each $ r \in R $, then $ \theta $ is a homeomorphism. 
	\end{lemma}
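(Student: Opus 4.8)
The plan is to check that $\theta$ is well defined, continuous and bijective, and then to promote it to a homeomorphism under the stated hypotheses. First I would verify well-definedness: for $r\le s$ one has $\varphi_{r}=\varphi_{rs}\circ\varphi_{s}$, so by functoriality of $(-)_{\ast}$ we get $\varphi_{r\ast}(f)=\varphi_{rs\ast}(\varphi_{s\ast}(f))$, which is exactly the coherence condition placing $(\varphi_{r\ast}(f))_{r\in R}$ in $\varprojlim_{r\in R}M_{\text{cts}}(B,A_{r})_{\text{po}}$; moreover each $\varphi_{r\ast}(f)=\varphi_{r}\circ f$ is continuous. Continuity of $\theta$ is then automatic: the inverse limit carries the subspace topology from $\prod_{r\in R}M_{\text{cts}}(B,A_{r})_{\text{po}}$, so it suffices that the composite of $\theta$ with each coordinate projection be continuous, and that composite is $\varphi_{r\ast}$, which is continuous by Lemma \ref{L1}.

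Next I would prove bijectivity. For injectivity, the structure maps $\varphi_{r}\colon A=\varprojlim_{r\in R}A_{r}\to A_{r}$ are jointly injective, since a point of $A$ is a coherent tuple and is therefore determined by its coordinates; hence $\varphi_{r}\circ f=\varphi_{r}\circ g$ for all $r$ forces $f=g$. For surjectivity, given a coherent family $(f_{r})\in\varprojlim_{r\in R}M_{\text{cts}}(B,A_{r})_{\text{po}}$, define $f\colon B\to\prod_{r\in R}A_{r}$ by $f(b)=(f_{r}(b))_{r}$; the coherence relations $\varphi_{rs}\circ f_{s}=f_{r}$ show that each $f(b)$ is a coherent tuple, so $f$ has image in $A$, and since $\varphi_{r}\circ f=f_{r}$ is continuous for every $r$, the universal property of the inverse limit in the category of topological spaces makes $f\colon B\to A$ continuous; by construction $\theta(f)=(f_{r})$.

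For the last assertion the key idea is to apply the classical fact that a continuous bijection from a compact space onto a Hausdorff space is a homeomorphism, but to apply it not to $\theta$ but to the corresponding map on the ambient function spaces of all maps. Running the argument of the previous two paragraphs verbatim with $M$ in place of $M_{\text{cts}}$ produces a continuous bijection $\bar\theta\colon M(B,A)_{\text{po}}\to\varprojlim_{r\in R}M(B,A_{r})_{\text{po}}$, $f\mapsto(\varphi_{r\ast}(f))$. Here $M(B,A)_{\text{po}}$ is compact by Lemma \ref{L3}(ii) because $A$ is compact, while each $M(B,A_{r})_{\text{po}}$ is Hausdorff by Lemma \ref{L3}(i) because $A_{r}$ is Hausdorff, so the product, and hence the inverse-limit subspace $\varprojlim_{r\in R}M(B,A_{r})_{\text{po}}$, is Hausdorff. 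Thus $\bar\theta$ is a homeomorphism. Finally $\theta$ is the restriction of $\bar\theta$ to the subspace $M_{\text{cts}}(B,A)_{\text{po}}$, and, by Lemma \ref{L1} together with the surjectivity already established, its image is exactly $\varprojlim_{r\in R}M_{\text{cts}}(B,A_{r})_{\text{po}}$ carrying the subspace topology (which agrees with its intrinsic inverse-limit topology, since taking subspaces commutes with products and with inverse limits); a restriction of a homeomorphism to a subspace is a homeomorphism onto its image, so $\theta$ is a homeomorphism.

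The two diagram chases for well-definedness and injectivity are routine, as is the universal-property step in surjectivity. The step that needs an actual idea is the last one: one should resist trying to show $M_{\text{cts}}(B,A)_{\text{po}}$ is itself compact, because in general it is not (it need not even be closed in $M(B,A)_{\text{po}}$ when $A$ is compact Hausdorff), and instead invoke compactness at the level of the ``all maps'' spaces, where Tychonoff's theorem applies, before descending to the continuous-map subspaces.
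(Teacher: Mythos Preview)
Your proof is correct and follows essentially the same approach as the paper: both establish well-definedness and continuity of $\theta$ via Lemma~\ref{L1} and the universal property, obtain bijectivity from the universal property of the inverse limit, and then---crucially---prove the homeomorphism claim by passing to the ambient map $\bar\theta$ on the full spaces $M(B,A)_{\text{po}}\to\varprojlim_{r}M(B,A_{r})_{\text{po}}$, applying the compact-to-Hausdorff argument there (via Lemma~\ref{L3}), and restricting. Your explicit warning that $M_{\text{cts}}(B,A)_{\text{po}}$ itself need not be compact, so one must invoke Tychonoff at the level of all maps, is exactly the idea the paper uses (though the paper states it less explicitly).
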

	\begin{proof}
		By Lemma \ref{L1} the continuous transition maps $ \varphi_{rt}: A_{r}\longrightarrow A_{t} $ for $ r,t \in R $ and $ r\geq t $ induce a commutative diagram of continuous maps 
		\[ 
		\begin{tikzcd}[row sep=1.2em , column sep=2em]
		M_{\text{cts}}(B,A_{r})_{\text{po}} \arrow[hook]{d}{}\arrow{r}{\varphi_{rt\ast}}[swap]{} & M_{\text{cts}}(B,A_{t})_{\text{po}} \arrow[hook]{d}{} \\ M(B,A_{r})_{\text{po}} \arrow{r}{\varphi_{rt\ast}}[swap]{} & M(B,A_{t})_{\text{po}};  
		\end{tikzcd} \]
		therefore we see that the sets $ \{M_{\text{cts}}(B,A_{r}), \varphi_{rt\ast}, R \} $ and $ \{M(B,A_{r}), \varphi_{rt\ast}, R \} $ are inverse system of topological spaces; hence the inverse limits exists and is unique up to isomorphism cf. \cite[p.~2, Proposition 1.1.1]{MR2599132}. In particular, by the universal property of the inverse limit, we obtain a unique continuous commutative diagram
		\[ 
		\begin{tikzcd}[row sep=1.2em , column sep=2em]
		M_{\text{cts}}(B,A)_{\text{po}} \arrow[hook]{d}{}\arrow{r}{\theta}[swap]{} & \varprojlim_{\stackrel{ \ }{r\in R}} M_{\text{cts}}(B,A_{r})_{\text{po}} \arrow[hook]{d}{} \\ M(B,A)_{\text{po}} \arrow{r}{\theta}[swap]{} & \varprojlim_{\stackrel{ \ }{r\in R}} M(B,A_{r})_{\text{po}},  
		\end{tikzcd} \]
		it is clear that $ \theta $ on the bottom horizontal arrow is a continuous bijection; also one has $ \theta(M_{\text{cts}}(B,A)_{\text{po}})=\varprojlim_{\stackrel{ \ }{r\in R}} M_{\text{cts}}(B,A_{r})_{\text{po}}  $ since given an element $ (f_{r}) \in \varprojlim_{\stackrel{ \ }{r\in R}} M_{\text{cts}}(B,A_{r})_{\text{po}} $, then by the universal property of the inverse limit $ \varprojlim_{\stackrel{ \ }{r\in R}}A_{r} $ there exists a unique continuous map $ f : B \longrightarrow A $ such that $ \varphi_{r\ast}(f)=f_{r} $ for all $ r \in R $ cf. \cite[p.~2]{MR2599132}.
		
		In particular, if $ A $ is a compact space, then by Lemma \ref{L3}, ii), $  M(B,A)_{\text{po}} $ is a compact space; further if $ A_{r} $ is Hausdorff for each $ r \in R $, then by Lemma \ref{L3}, i), the space $ M_{\text{cts}}(B,A_{r}) $ is Hausdorff for each $ r \in R $; therefore \[ \varprojlim_{\stackrel{ \ }{r\in R}} M(B,A_{r})_{\text{po}} \subseteq \prod_{r \in R} M(B,A_{r})_{\text{po}} \] is a Hausdorff topological space cf. \cite[p.~87, 13.8 Theorem. a) \& b)]{MR2048350}; thus in such case $ \theta $ is a continuous bijection from a compact space to a Hausdorff space, which implies that $ \theta $ is a homeomorphism cf. \cite[p.~123, 17.14 Theorem]{MR2048350}, this proves the lemma.      
	\end{proof}
	\begin{definition}[Evenly Continuous Sets]\label{D1}
		We say a subset $ F \subseteq M(B,A) $ is \textit{evenly continuous} if for each $ b \in B $, each $ a \in A $, and each neighbourhood $ U $ of $ a $ there exists a neighbourhood $ V $ of $ b $ and a neighbourhood $ W $ of $ a $ such that for all $ f \in F $ we have $ f(V) \subseteq U $ whenever $ f(b) \in W $. cf. \cite[p.~235]{MR0370454}. We say $ A $ is \textit{evenly continuous with respect to a topological space $ B $} if the set $ M_{\text{cts}}(B,A) $ is evenly continuous. 
	\end{definition}
	
	Two obvious examples of evenly continuous sets are as follows. For two topological spaces $ A $, and $ B $, denote by $ M(B^{0},A) \subseteq M(B,A) $ the of all \textit{constant maps} from $ B\longrightarrow A $. Then $ M(B^{0},A) $ is an evenly continuous set since let $ b \in B $, $ a \in A $, and $ U $ be a neighbourhood of $ a $. Then $ B $ is a neighbourhood of $ b $ and $ U $ is a neighbourhood of $ a $ such that for all $ f \in M^{0}(B,A) $ we have $ f(B)=a_{f} \in U $ whenever $ f(b)=a_{f} \in U $, and this show that $ M(B^{0},A) $ is evenly continuous; also if $ A $ and $ B $ are discrete spaces, then $ A $ is evenly continuous with respect to $ B $ since let $ b \in B $, $ a \in A $ and $ U $ a neighbourhood of $ a $, then $ \{b\} $ is a neighbourhood of $ b $ and $ U $ is a neighbourhood of $ a $ such that $ f(\{b\})\subseteq U $ whenever $ f(b)\in U $ for all $ f \in M_{\text{cts}}(B,A) $.
	
	\begin{lemma}\label{L5}
		Suppose $ A $ and $ B $ are compact and Hausdorff spaces. If $ A $ is an evenly continuous space with respect to $ B $, then $ M_{\text{cts}}(B,A)_{\text{po}} $ is a compact space. In particular, if $ A $ has a presentation $ A= \varprojlim_{\stackrel{ \ }{r\in R}}A_{r} $, where $ A_{r} $ is a compact, Hausdorff, which is an evenly continuous space with respect to $ B $ for each $ r \in R $, then $ M_{\text{cts}}(B,A)_{\text{po}} $ is a compact space.
	\end{lemma}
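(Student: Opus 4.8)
The statement is an Arzel\`a--Ascoli-type compactness criterion, and I would prove the first assertion by realising $M_{\text{cts}}(B,A)_{\text{po}}$ as a closed subspace of a compact Hausdorff space. Since $A$ is compact, Lemma \ref{L3}, ii), gives that $M(B,A)_{\text{po}}\cong\prod_{b\in B}A_{b}$ is compact; since $A$ is Hausdorff, Lemma \ref{L3}, i), gives that it is Hausdorff. So everything reduces to showing that $F\stackrel{\mathrm{def}}{=}M_{\text{cts}}(B,A)$ is closed in $M(B,A)_{\text{po}}$ --- equivalently, that under our even-continuity hypothesis a pointwise limit of continuous maps $B\longrightarrow A$ is again continuous --- for then $M_{\text{cts}}(B,A)_{\text{po}}$ is a closed subspace of a compact space, hence compact.

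To prove this closedness I would show that the closure $\overline{F}$ of $F$ in $M(B,A)_{\text{po}}$ is still evenly continuous, and then use the elementary remark that every member of an evenly continuous family is continuous (apply Definition \ref{D1} with $a=f(b)$: the neighbourhood $W$ of $f(b)$ then automatically contains $f(b)$, so $f(V)\subseteq U$). Granting that $\overline{F}$ is evenly continuous, we get $\overline{F}\subseteq M_{\text{cts}}(B,A)=F$, whence $F=\overline{F}$ is closed. For the even continuity of $\overline{F}$: fix $b\in B$, $a\in A$, and an open neighbourhood $U$ of $a$; since $A$ is compact Hausdorff, hence regular, choose an open $U_{1}$ with $a\in U_{1}\subseteq\overline{U_{1}}\subseteq U$; even continuity of $F$ yields a neighbourhood $V$ of $b$ and an open neighbourhood $W$ of $a$ (shrink it to its interior if necessary) such that $f(V)\subseteq U_{1}$ for all $f\in F$ with $f(b)\in W$; now take $g\in\overline{F}$ with $g(b)\in W$ and $v\in V$, and suppose $g(v)\notin\overline{U_{1}}$ --- then $\beta(\{b\},W)\cap\beta(\{v\},A\setminus\overline{U_{1}})$ is an open neighbourhood of $g$, so it meets $F$ in some $f$ with $f(b)\in W$ and $f(v)\notin\overline{U_{1}}$; but $f(b)\in W$ gives $f(V)\subseteq U_{1}$, so $f(v)\in U_{1}\subseteq\overline{U_{1}}$, a contradiction; hence $g(v)\in\overline{U_{1}}\subseteq U$, i.e. $g(V)\subseteq U$. (I note in passing that compactness and Hausdorffness of $B$ are not actually used in this first assertion; only $A$ compact Hausdorff enters.)

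For the second assertion I would first note that $A=\varprojlim_{r\in R}A_{r}$ is itself compact and Hausdorff, being an inverse limit of such spaces, so Lemma \ref{L4} applies and $\theta:M_{\text{cts}}(B,A)_{\text{po}}\longrightarrow\varprojlim_{r\in R}M_{\text{cts}}(B,A_{r})_{\text{po}}$ is a homeomorphism. One cannot apply the first assertion to $A$ directly --- even continuity need not be inherited by the inverse limit --- but one can apply it to each $A_{r}$, so every $M_{\text{cts}}(B,A_{r})_{\text{po}}$ is compact, and it is Hausdorff by Lemma \ref{L3}, i). The inverse limit on the right is a closed subspace of $\prod_{r\in R}M_{\text{cts}}(B,A_{r})_{\text{po}}$ (closed as an intersection of equalizers of continuous maps into Hausdorff spaces), and this product is compact by Tychonoff's theorem; hence the inverse limit --- and therefore its homeomorphic image $M_{\text{cts}}(B,A)_{\text{po}}$ --- is compact.

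The one genuinely non-formal step is the even continuity of the pointwise closure $\overline{F}$ in the second paragraph: this is the real Ascoli content, and regularity of $A$ is precisely what forces the pointwise limits to be continuous. Everything else --- the reduction to closedness, Tychonoff, and the passage through Lemma \ref{L4} --- is formal and uses only results already established above.
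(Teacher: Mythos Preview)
Your argument is correct. For the first assertion, however, your route differs from the paper's: the paper invokes the Ascoli theorem from Kelley to deduce that $M_{\text{cts}}(B,A)_{\text{co}}$ is compact, and then uses the continuous bijection $\text{id}:M_{\text{cts}}(B,A)_{\text{co}}\longrightarrow M_{\text{cts}}(B,A)_{\text{po}}$ from a compact space to a Hausdorff space (Lemma~\ref{L3}, i)) to conclude that the two topologies coincide and the point-open space is compact. You instead work entirely inside $M(B,A)_{\text{po}}$ and prove closedness of $M_{\text{cts}}(B,A)$ directly, by showing that the pointwise closure of an evenly continuous family remains evenly continuous (using regularity of $A$) and that members of evenly continuous families are automatically continuous. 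Your approach is more self-contained --- it effectively reproves the relevant half of Ascoli rather than citing it --- and it makes transparent, as you note, that no hypothesis on $B$ is needed for the first assertion; the paper's route is quicker if one is willing to quote Ascoli, but it does require local compactness and regularity of $B$ to feed into Kelley's statement. For the second assertion your argument and the paper's coincide: apply the first part to each $A_{r}$, use Lemma~\ref{L4} for the homeomorphism with the inverse limit, and conclude by Tychonoff and closedness of the inverse limit in the product.
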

	\begin{proof}
		If $ A $ and $ B $ are compact and Hausdorff spaces, then $ A $ and $ B $ are $ T_{4} $-spaces cf. \cite[p.~121, 17.1 Theorem]{MR2048350}; therefore $ A $ and $ B $ are regular spaces; in particular $ A $ and $ B $ are also locally compact cf. \cite[p.~130, 18.2 Theorem]{MR2048350}. Now if $ A $ is evenly continuous with respect to $ B $, then $ M_{\text{cts}}(B,A)_{\text{co}} $ is a compact space cf. \cite[p.~236, 21 ASCOLI THEOREM]{MR0370454}. It also follows from Lemma \ref{L3}, i), that $ M_{\text{cts}}(B,A)_{\text{po}} $ is a Hausdorff space. Now the map $ \text{Id}: M_{\text{cts}}(B,A)_{\text{co}} \longrightarrow M_{\text{cts}}(B,A)_{\text{po}} $ is a continuous bijection from a compact space to a Hausdorff space, and therefore it is a homeomorphism cf. \cite[p.~123, 17.14 Theorem]{MR2048350}, which implies that $ M_{\text{cts}}(B,A)_{\text{po}} $ is a compact space. 
		
		In particular, if $ A $ has a presentation $ A= \varprojlim_{\stackrel{ \ }{r\in R}}A_{r} $, where $ A_{r} $ is a compact, Hausdorff and evenly continuous space with respect to $ B $ for each $ r \in R $, then by above $ M_{\text{cts}}(B,A_{r})_{\text{po}} $ is a compact space and by Lemma \ref{L3}, i), $ M_{\text{cts}}(B,A_{r})_{\text{po}} $ is a Hausdorff space; further by Lemma \ref{L4} we have a homeomorphism  \[ \theta : M_{\text{cts}}(B,A)_{\text{po}} \longrightarrow \varprojlim_{\stackrel{ \ }{r\in R}} M_{\text{cts}}(B,A_{r})_{\text{po}} \ \ \text{given by} \ \ f\longmapsto(\varphi_{r,\ast}(f)). \] Now since $ \varprojlim_{\stackrel{ \ }{r\in R}} M_{\text{cts}}(B,A_{r})_{\text{po}}  $ is a closed subset of $ \prod_{r \in R}M_{\text{cts}}(B,A_{r})_{\text{po}}  $ cf. \cite[p.~3, Lemma 1.1.2]{MR2599132}, and $ \prod_{r \in R}M_{\text{cts}}(B,A_{r})_{\text{po}}  $ is a compact space  cf. \cite[p.~120, 17.8 Theorem (Tychonoff)]{MR2048350}, we have that $ \varprojlim_{\stackrel{ \ }{r\in R}} M_{\text{cts}}(B,A_{r})_{\text{po}}  $ is a compact space cf. \cite[p.~120, 17.5 Theorem. a)]{MR2048350}; therefore we conclude that $ M_{\text{cts}}(B,A)_{\text{po}} $ is a  compact space, which proves the lemma. 
	\end{proof} 
	\subsection{$ G $-Groups}\label{SB2}
	We shall be concerned with a fixed topological group $ G $ for the rest of the document. We denote the continuous inversion and multiplication of a topological group $ H $ by $ \iota_{H} : H\longrightarrow H $ and $ m_{H}: H \times H \longrightarrow H $ respectively.
	\begin{definition}[(Continuous) $ G $-Group, $ G $-Space, and $ G $-Map]\label{D2}\footnote{Compare with \cite[p.~7, (1.7.1) Definition]{MR2392026} \& \cite[p.~45, 5.1]{MR1867431}.} A \textit{(topological) $ G $-group} is a Hausdorff topological group $ A $ on which $ G $ acts (on the left) continuously and compatibly with the group operation of $ A $. In other words, $ A $ is a \textit{$ G $-group} if $ A $ is a Hausdorff topological group together with a continuous map $ ac: G\times A \longrightarrow A $ (where $ G\times A $ is endowed with product topology), and we shall write $ \ ^{s}x \stackrel{\mathrm{def}}{=} ac(s,x) $, such that the following holds:
		\begin{enumerate}[i)]
			\item $ ac(1,x)=x $ and $ ac(m_{G}(s,t),x) =ac(s,ac(t,x)) $ i.e., $ ^{1}x=x $ and $ ^{st}x= \ ^{s}(^{t}x) $, for all $ s,t \in G $ and $ x \in A $.
			\item $ ac(s,m_{A}(x,y))=m_{A}(ac(s,x),ac(s,y)) $ i.e., $ ^{s}(xy)= \ ^{s}x^{s}y $, for all $ s \in G $ and $ x,y \in A $.
		\end{enumerate} 
		
		A Hausdorff topological space $ A' $ together with a continuous map $ ac: G\times A' \longrightarrow A'  $ such that condition i) above holds is known as a \textit{(topological) $ G $-space}.
		
		A \textit{$ G $-group homomorphism} between two $ G $-groups $ A $ and $ A' $ is a continuous group homomorphism $ \varphi : A\longrightarrow A' $ such that $ \varphi $ commutes with the action of $ G $, i.e., $ \varphi(ac(s,x))=ac(s,\varphi(x)) $, for all $ s \in G $ and $ x \in A $. Similarly, a \textit{map of $ G $-spaces} is a continuous map which commutes with the action of $ G $.
	\end{definition}
	\begin{lemma}\label{L6}
		Let $ A $ be a $ G $-group. Then the map \[ \rho : G \longrightarrow M(A,A)_{\text{po}} \ \  \text{given by} \ \  s\longmapsto \rho(s)\stackrel{\mathrm{def}}{=} ac_{\mid_{(\{s\}\times A)}} \] is continuous. In particular, one has $ \Ima \rho \subseteq \text{Aut}_{\text{cts}}(A)_{\text{po}} $ as a subgroup.
	\end{lemma}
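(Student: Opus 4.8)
The plan is to establish continuity of $\rho$ by using the homeomorphism $M(A,A)_{\text{po}}\cong\prod_{a\in A}A_a$ noted in the previous subsection, so that it suffices to check that each coordinate $\varepsilon_a\circ\rho : G\longrightarrow A$ is continuous. First I would unwind the definition: for fixed $a\in A$, the map $\varepsilon_a\circ\rho$ sends $s\longmapsto \rho(s)(a)=ac(s,a)={}^{s}a$. This is precisely the composition $G\xrightarrow{\ s\mapsto (s,a)\} G\times A \xrightarrow{\ ac\ } A$, i.e.\ the restriction of the (continuous) action map $ac$ to the slice $G\times\{a\}$, which is continuous because $ac$ is continuous and $s\mapsto(s,a)$ is continuous into the product. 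Since every coordinate map is continuous, the universal property of the product topology gives that $\rho$ itself is continuous into $M(A,A)_{\text{po}}$.

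Next I would verify that $\rho$ lands inside $\mathrm{Aut}_{\text{cts}}(A)$ and is a group homomorphism there. For each $s\in G$, condition i) of Definition \ref{D2} gives ${}^{1}a=a$ and ${}^{st}a={}^{s}({}^{t}a)$, so $\rho(1)=\mathrm{id}_A$ and $\rho(st)=\rho(s)\circ\rho(t)$; in particular $\rho(s)\circ\rho(s^{-1})=\rho(1)=\mathrm{id}_A=\rho(s^{-1})\circ\rho(s)$, so $\rho(s)$ is a bijection with continuous inverse $\rho(s^{-1})$ — here continuity of $\rho(s^{-1})$ as a map $A\longrightarrow A$ follows again from continuity of $ac$ restricted to a slice, exactly as above. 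Condition ii) gives ${}^{s}(xy)={}^{s}x\,{}^{s}y$, so each $\rho(s)$ is a group homomorphism, hence a topological group automorphism of $A$. Thus $\rho(s)\in\mathrm{Aut}_{\text{cts}}(A)$ for all $s$, the image $\Ima\rho$ is a subgroup since $\rho$ is a homomorphism into the group $\mathrm{Aut}_{\text{cts}}(A)$, and the topology on $\mathrm{Aut}_{\text{cts}}(A)_{\text{po}}$ is the subspace topology from $M(A,A)_{\text{po}}$, so the corestriction $G\longrightarrow\mathrm{Aut}_{\text{cts}}(A)_{\text{po}}$ remains continuous.

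I do not expect any genuine obstacle here: the statement is essentially a repackaging of the definition of a continuous $G$-action through the identification of $M(A,A)_{\text{po}}$ with a product space. The only point requiring a little care is that the ``point-open'' continuity of $\rho$ does \emph{not} see the topology of $A$ as the domain of the functions $\rho(s)$ — it only checks evaluation at points — which is exactly why reducing to the coordinate maps $\varepsilon_a\circ\rho$ works so cleanly, and why the argument would fail (or need strengthening, e.g.\ joint continuity / local compactness hypotheses) if one replaced $\text{po}$ by $\text{co}$. Showing $\Ima\rho\subseteq\mathrm{Aut}_{\text{cts}}(A)$ uses only the algebraic axioms i) and ii), so no topological subtlety arises there beyond the one already noted.
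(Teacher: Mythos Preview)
Your proposal is correct and takes essentially the same approach as the paper: both reduce continuity of $\rho$ into $M(A,A)_{\text{po}}$ to the continuity of the evaluations $s\mapsto ac(s,a)$ for each fixed $a\in A$, and both deduce $\Ima\rho\subseteq\mathrm{Aut}_{\text{cts}}(A)$ from conditions i) and ii) of Definition~\ref{D2}. The only cosmetic difference is that you invoke the universal property of the product $M(A,A)_{\text{po}}\cong\prod_{a\in A}A_a$, whereas the paper unwinds this by directly checking that $\rho^{-1}(\beta(\{x\},V))$ is open for each subbasic set $\beta(\{x\},V)$; since $\beta(\{x\},V)=\varepsilon_x^{-1}(V)$, these are the same computation.
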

	\begin{proof}
		To show $ \rho  $ is continuous, fix a point $ x \in A $ and an open subset $ V \subseteq A $. Let $ \beta(\{x\},V) \subseteq M(A,A)_{\text{po}}  $ be an element of the subbase for the point-open topology on $ M(A,A) $. Let $ s \in \rho^{-1}(\beta(\{x\},V)) $. Then $ ac_{\mid_{(\{s\}\times A)}}(x)=ac(s,x)  \in V  $, so $ (s,x) \in ac^{-1}(V)  $. By continuity of $ ac $ the set $ ac^{-1}(V) $ is an open subset of $ G \times A $; therefore there exist open subsets $ U_{1} \subseteq G $ and $ V_{1} \subset A $ with $ (s , x) \subseteq U_{1} \times V_{1} \subseteq  ac^{-1}(V)  $. Now $ s \in U_{1}   $, and if $ t \in U_{1}  $, then $ \rho(t)(x)=ac(t,x) \in V $, so $\rho(t) \in \beta(\{x\},V)  $ i.e., $ t \in \rho^{-1}(\beta(\{x\},V)) $, therefore $  U_{1} \subseteq \rho^{-1}(\beta(\{x\},V))  $. This shows $ \rho^{-1}(\beta(\{x\},V))  $ is an open subset of $ G $; hence $ \rho $ is a continuous map. In particular, since the action of $ G $ respects the multiplication of $ A $ i.e., since $ ac_{\mid_{(\{s\}\times A)}} $ is a continuous group automorphism of $ A $ (upon composing with the homeomorphism $ \{s\}\times A\cong A $) for every $ s \in G $, and $ ac_{\mid_{(\{st\}\times A)}}=ac_{\mid_{(\{s\}\times A)}} ac_{\mid_{(\{t\}\times A)}}  $, we must also have $ \Ima \rho \subseteq \text{Aut}_{\text{cts}}(A)_{\text{po}} $. This shows that $ \Ima \rho \subseteq \text{Aut}_{\text{cts}}(A)_{\text{po}} $ as a subgroup, which proves the lemma.
	\end{proof}		
	\subsection{Continuous Cocycles and Torsors}\label{SB3}
	 We define the set of continuous cocycles endowed with point-open topology and investigate its topological properties. We also define the set of continuous torsors and briefly discuss their well-known relationship to continuous cocycles.	
	\begin{definition}[Cocycles and Continuous Cocycle Pointed Spaces]\label{D3}\footnote{Compare with \cite[p.~45, 5.1]{MR1867431}} Let $ A $ be a $ G $-group. Define the set of \textit{(1-)cocycles of $ G $ with values in} $ A $ by 
		\[Z^{1}(G,A)_{\text{po}}\stackrel{\mathrm{def}}{=}\{a \in M(G,A)_{\text{po}}: a_{st}=a_{s} \ ^{s}a_{t} \ \text{for all} \ s,t \in G \}.\]
		We Let $ Z^{1}_{\text{cts}}(G,A)_{\text{po}}\stackrel{\mathrm{def}}{=} Z^{1}(G,A)_{\text{po}}\cap M_{\text{cts}}(G,A)_{\text{po}} $.
	\end{definition}
	The sets $ Z^{1}(G,A)_{\text{po}} $, and $ Z^{1}_{\text{cts}}(G,A)_{\text{po}} $ are topological pointed sets, whose topologies are the subspace topology inherited from $ M(G,A)_{\text{po}} $, and their distinguished point is given by the trivial cocycle $ a_{1} $, which is the constant map $ a_{1,s}=1 $ for all $ s \in G $.  
	\begin{remark}\label{R1}
		Given a $ G $-group homomorphism $ \varphi : A\longrightarrow A' $ between $ G $-groups $ A $ and $ A' $ as defined in Definition \ref{D3}, it follows from Lemma \ref{L1}, and the fact that $ \varphi $ commutes with the action of $ G $ i.e., $ \varphi(\ ^{s}x)= \ ^{s}\varphi(x) $ for all $ s \in G $ and $ x \in A $, that $ \varphi $ induces a continuous maps \[ \varphi_{\ast} : Z^{1}(G,A)_{\text{po}}\longrightarrow Z^{1}(G,A')_{\text{po}},\] which restricts to a continuous map (also denoted by $ \varphi_{\ast} $) \[ \varphi_{\ast} : Z^{1}_{\text{cts}}(G,A)_{\text{po}}\longrightarrow Z^{1}_{\text{cts}}(G,A')_{\text{po}}.\] 
		
		In addition, if $ \psi : H\longrightarrow G $ is a continuous group homomorphism and $ A $ is a $ G $-group, then $ A $ can be regraded as a $ H $-group, and using Lemma \ref{L2}, $ \psi $ induces a continuous map \[  \psi^{\ast} : Z^{1}(G,A)_{\text{po}}\longrightarrow Z^{1}(H,A)_{\text{po}},\] which restricts to a continuous map (also denoted by $ \psi^{\ast} $) \[ \psi^{\ast} : Z^{1}_{\text{cts}}(G,A)_{\text{po}}\longrightarrow Z^{1}_{\text{cts}}(H,A)_{\text{po}}.\]      
	\end{remark}
	
	Let  $ A $ be a $ G $-group. We briefly recall the well-known relationship between continuous cocycles of $ G $ in $ A $ and continuous $ G $-torsors under $ A $. 
	\begin{definition}[Continuous $ G $-Torsor]\label{D4}\footnote{Compare with \cite[p.~46, 5.2]{MR1867431}.} Let $ A $ be a $ G $-group. Then a \textit{continuous $ G $-torsor under}  $ A $ is a topological space $ P $, which is a left $ G $-space with action \[ ac_{G}:G\times P\longrightarrow P \ \ \text{given by} \ \ (s,p)\longmapsto ac_{G}(s,p)\stackrel{\mathrm{def}}{=} \ ^{s}p \] and a right $ A $-space with action \[ ac_{A}:P\times A\longrightarrow P \ \ \text{given by} \ \ (p,x)\longmapsto ac_{A}(p,x) \stackrel{\mathrm{def}}{=} p.x  \] such that the map \[\phi_{A}: P\times A\longrightarrow  P \times P \ \ \text{given by} \ \ (p,x)\longmapsto (p,p.x) \]
		is a homeomorphism of $ G $-spaces.
		
		We denote by $ \text{Tors}^{G}_{\text{cts}}(A) $ the set of continuous $ G $-torsors under $ A $. A \textit{map of continuous $ G $-torsors under} $ A $  is a continuous map which commutes with the actions of $ G $ and $ A $.  
	\end{definition}
	
	Note if $ P $ is a continuous $ G $-torsor under $ A $ as in Definition \ref{D4}, then the map $ \phi_{A} $ is a $ G $-space map if and only if $ ac_{A} $ is a map of $ G $-spaces; since we have  \[ \phi_{A}(^{s}p, \ ^{s}x)=(^{s}p, \ ^{s}p.^{s}x)=(^{s}p, \ ^{s}(p.x)) \] for all $ s \in G $, $ p \in P $ and $ x \in A $ if and only if $ ^{s}p. ^{s}x= \ ^{s}(p.x) $ for all $ s \in G $, $ p \in P $ and $ x \in A $.
	\begin{remark}[Correspondence between continuous cocycles and torsors]\label{R2}
		Let $ a \in Z^{1}_{\text{cts}}(G,A)_{\text{po}} $. Then one can construct a continuous $ G $-torsor under $ A $ denoted by $ _{a}P $ as follows. The underlying set of $ _{a}P $ is $ A $ on which $ G $ acts on the left by \[ (s,p)\longmapsto \ ^{s_{'}}p\stackrel{\mathrm{def}}{=} a_{s} \ ^{s}p \stackrel{\mathrm{def}}{=} m_{A}(a_{s}, \ ^{s}p),\] and $ A $ acts on the right by \[ (p,x)\longmapsto p.x\stackrel{\mathrm{def}}{=} m_{A}(p,x).\]
		Therefore, we have $ ^{s_{'}}(p.x)=a_{s} \ ^{s}(px)=a_{s} \ ^{s}p \ ^{s}x= \ ^{s_{'}}p. \ ^{s}x $, i.e., the action of $ A $ is a $ G $-space map. One verifies immediately that $ _{a}P $ is a continuous $ G $-torsor. In particular, for $ a,b \in Z^{1}_{\text{cts}}(G,A)_{\text{po}} $ we have $ _{a}P= \  _{b}P$ if and only if $ a=b $. This gives a well-defined injective map of sets \[\mathcal{T}: Z^{1}_{\text{cts}}(G,A)_{\text{po}}\longrightarrow \text{Tors}^{G}_{\text{cts}}(A). \] 
		
		Now let $ P $ be a $ G $-torsor under $ A $ as defined in Definition \ref{D4}. Choose a point $ p_{0} \in P $. Then we obtain a continuous map $ a: G \longrightarrow A $, which makes the following diagram commutative
		\[ 
		\begin{tikzcd}[row sep=1.1em , column sep=5em]
		G\times P  \arrow[]{r}[]{(s,p)\mapsto (p,\ ^{s}p)} & P\times P \arrow{r}[]{\phi_{A}^{-1}}  & P\times A\arrow{d}[]{} \\ G\cong G\times \{p_{0}\}\arrow{u}[]{} \arrow{rr}[]{a} && A,
		\end{tikzcd} \]
		where the left vertical arrow is the natural inclusion and the right vertical arrow is the projection to the second factor. Therefore, for any $ s \in G $, $ a_{s} \in A $ is the unique element satisfying $ ^{s}p_{0}=p_{0}.a_{s} $ (this is possible since $ \phi_{A} $ is a bijection). In particular, since $ \phi_{A} $ is a map of $ G $-spaces and for each $ s,t \in G $, we have that $ a_{st} \in A $ is the unique element satisfying $ ^{st}p_{0}=p_{0}.a_{st} $, we find \[ ^{st}p_{0}= \ ^{s}(^{t}p_{0})= \ ^{s}(p_{0}.a_{t})= \ ^{s}p_{0}. ^{s}a_{t}=p_{0}. a_{s}. \ ^{s}a_{t}=p_{0}.(a_{s} \ ^{s}a_{t});\]
		since $ \phi $ is injective, we have $ a_{st}=a_{s} \ ^{s}a_{t} $ for all $ s,t \in G $; from this it follows that $ a $ is a continuous cocycle.
		
		Furthermore, by restriction of $ \phi_{A} $ to the subset $ \{p_{0}\}\times A $, we obtain a canonical homeomorphism of $ G $,$ A $-spaces 
		\[ A\cong \{p_{0}\}\times A\longrightarrow  \{p_{0}\} \times P\cong P \ \ \text{given by} \ \ x\longmapsto p_{0}.x, \] 
		where $ A $ acts on itself by right multiplication, and $ G $ acts on $ A $ by $ ^{s_{'}}x= a_{s} \ ^{s}x $ i.e., $ P \cong  \ _{a} P $ as $ G $,$ A $-spaces. This shows that the map $ \mathcal{T} $ is a bijection of sets. Therefore, we may introduce a topology on $ \text{Tors}^{G}_{\text{cts}}(A) $ by declaring a subset to be open if it is the image of an open subset of $ Z^{1}_{\text{cts}}(G,A)_{\text{po}} $ under $ \mathcal{T} $. In such case  $ \mathcal{T} $ will be a homeomorphism, and we shall denote by $ \text{Tors}^{G}_{\text{cts}}(A)_{\text{po}} $ the set $ \text{Tors}^{G}_{\text{cts}}(A) $ with the topology imposed by the bijection $  \mathcal{T} $. 
	\end{remark}
	\begin{lemma}\label{L7}
		Let $ A $ be a $ G $-group. Then $ Z^{1}(G,A)_{\text{po}} \subseteq M(G,A)_{\text{po}} $ is a closed subset; so $  Z_{\text{cts}}^{1}(G,A)_{\text{po}} \subseteq M_{\text{cts}}(G,A)_{\text{po}}  $ is a closed subset. In particular, if $ G $ is a compact and Hausdorff space, and $ A $ is a compact $ G $-group evenly continuous with respect to $ G $, then $  Z_{\text{cts}}^{1}(G,A)_{\text{po}} $ is a compact and Hausdorff space. 
	\end{lemma}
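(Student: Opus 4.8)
The plan is to exhibit $Z^{1}(G,A)_{\text{po}}$ as an intersection of closed subsets of $M(G,A)_{\text{po}}$ indexed by pairs $(s,t)\in G\times G$, and then to read off the remaining assertions from the function-space lemmas already established.

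First I would fix $s,t\in G$ and observe that the cocycle condition at $(s,t)$ cuts out a closed set. Recall that each evaluation map $\varepsilon_{b}:M(G,A)_{\text{po}}\longrightarrow A$, $a\longmapsto a_{b}$, is continuous, and that by Lemma \ref{L6} the map $\rho(s)=ac_{\mid_{(\{s\}\times A)}}$ is a continuous automorphism of $A$. Hence the assignment
\[ a\longmapsto \bigl(a_{st},\ m_{A}(a_{s},\rho(s)(a_{t}))\bigr)=\bigl(a_{st},\ a_{s}\ {}^{s}a_{t}\bigr) \]
is a continuous map $\Phi_{s,t}:M(G,A)_{\text{po}}\longrightarrow A\times A$, being assembled from $\varepsilon_{st}$, $\varepsilon_{s}$, $\varepsilon_{t}$, $\rho(s)$ and the continuous multiplication $m_{A}$. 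Since $A$ is Hausdorff, the diagonal $\Delta_{A}\subseteq A\times A$ is closed, so $\Phi_{s,t}^{-1}(\Delta_{A})=\{a\in M(G,A):a_{st}=a_{s}\ {}^{s}a_{t}\}$ is closed in $M(G,A)_{\text{po}}$.

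Next I would intersect over all parameters. By Definition \ref{D3},
\[ Z^{1}(G,A)_{\text{po}}=\bigcap_{(s,t)\in G\times G}\Phi_{s,t}^{-1}(\Delta_{A}), \]
an intersection of closed sets, hence closed in $M(G,A)_{\text{po}}$. Consequently $Z^{1}_{\text{cts}}(G,A)_{\text{po}}=Z^{1}(G,A)_{\text{po}}\cap M_{\text{cts}}(G,A)_{\text{po}}$ is the trace of a closed subset on the subspace $M_{\text{cts}}(G,A)_{\text{po}}$, hence closed there.

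Finally, for the ``in particular'' clause: if $G$ is compact and Hausdorff and $A$ is a compact $G$-group evenly continuous with respect to $G$, then $A$ is in particular a compact Hausdorff space (Hausdorffness being part of the definition of a $G$-group), so Lemma \ref{L5} gives that $M_{\text{cts}}(G,A)_{\text{po}}$ is compact, while Lemma \ref{L3}, i), gives that it is Hausdorff. Then $Z^{1}_{\text{cts}}(G,A)_{\text{po}}$, being a closed subspace of a compact Hausdorff space, is itself compact and Hausdorff. I do not expect a genuine obstacle here; the only point requiring care is the continuity of $a\longmapsto {}^{s}a_{t}$, which is precisely what Lemma \ref{L6} supplies, together with the use of the Hausdorff property of $A$ to ensure $\Delta_{A}$ is closed.
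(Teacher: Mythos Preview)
Your proof is correct and follows essentially the same approach as the paper: both exhibit $Z^{1}(G,A)_{\text{po}}$ as $\bigcap_{s,t\in G}$ of preimages of a closed set under continuous maps testing the cocycle condition at $(s,t)$, then invoke Lemmas \ref{L3} and \ref{L5} for the compact--Hausdorff conclusion. The only cosmetic difference is that the paper uses the map $F_{s,t}(a)=a_{s}\,{}^{s}a_{t}\,a_{st}^{-1}$ and the closed point $\{1\}\subseteq A$ (verifying continuity of $F_{s,t}$ by a direct open-set argument), whereas you use $\Phi_{s,t}(a)=(a_{st},a_{s}\,{}^{s}a_{t})$ and the closed diagonal $\Delta_{A}\subseteq A\times A$, obtaining continuity more modularly from the evaluation maps and Lemma \ref{L6}.
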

	\begin{proof}
		To show $ Z^{1}(G,A)_{\text{po}} \subseteq M(G,A)_{\text{po}} $ is a closed subset, let $ s,t \in G $ and define a map $ F_{s,t}: M(G,A)_{\text{po}}\longrightarrow A  $ by \[ F_{s,t}(f)\stackrel{\mathrm{def}}{=} f_{s} \ ^{s}f_{t}f_{st}^{-1}\stackrel{\mathrm{def}}{=} m_{A}(f_{s},m_{A}(ac(s,f_{t}),\iota_{A}(f_{st}))).  \]
		We claim that the map $ F_{s,t} $ is continuous. We shall check the continuity of $ F_{s,t}  $ directly. 
		
		Let $ V \subseteq A $ be an open subset and $ f \in F_{s,t}^{-1}(V) $. Then \[ m_{A}(f_{s},m_{A}(ac(s,f_{t}),\iota_{A}(f_{st}))) \in V,\] so $  (f_{s},m_{A}(ac(s,f_{t}),\iota_{A}(f_{st}))) \in m_{A}^{-1}(V) $, and by continuity of $ m_{A} $ the subset $ m_{A}^{-1}(V) $ is open in $ A\times A $; therefore there exist open subsets $ U_{1} ,V_{1} \subseteq A $ such that \[(f_{s}, m_{A}(ac(s,f_{t}),\iota_{A}(f_{st})))  \in U_{1}\times V_{1} \subseteq m_{A}^{-1}(V),  \]
		so $ f \in \beta(\{s\},U_{1}) $, and $ m_{A}(ac(s,f_{t}),\iota_{A}(f_{st})) \in  V_{1}  $, hence $  (ac(s,f_{t}),\iota_{A}(f_{st})) \in m_{A}^{-1}(V_{1}) $; since $ m_{A} $ is continuous, and $ V_{1} \subseteq A $ is an open subset, we can find open subsets $ U_{2} ,V_{2} \subseteq A $ such that \[ (ac(s,f_{t}),\iota_{A}(f_{st}))   \in U_{2}\times V_{2} \subseteq m_{A}^{-1}(V_{1}),  \]
		i.e., $ (s,f_{t}) \in  ac^{-1}(U_{2})  $ and $ f_{st} \in \iota_{A} (V_{2})  $; therefore $ (s,f_{t}) \in  ac^{-1}(U_{2})  $ and $ f \in \beta(\{st\},\iota_{A} (V_{2})) $. 
		
		Now since $ ac $ is a continuous map, $ ac^{-1}(U_{2}) $ is an open subset of $ G\times A $, so there exist open subsets $ U_{3} \subseteq G $ and $ V_{3}\subseteq A $ such that \[(s,f_{t}) \in U_{3} \times V_{3}  \subseteq  ac^{-1}(U_{2}) \ , \] 
		hence $ s \in U_{3} $ and $ f \in \beta(\{t\},V_{3}) $. 
		Now \[ f \in \beta(\{s\},U_{1})\cap  \beta(\{st\},\iota_{A} (V_{2})) \cap \beta(\{t\},V_{3}), \] which is an open subset of $ M(G,A)_{\text{po}} $, and if \[ g \in \beta(\{s\},U_{1})\cap\beta(\{st\},\iota_{A} (V_{2}))  \cap \beta(\{t\},V_{3}),  \] then $ g_{s} \in U_{1} $, $ g_{st}^{-1} \in V_{2}  $, and $ g_{t} \in V_{3}  $, so $ ^{s}g_{t} \in U_{2} $ and so $ ^{s}g_{t} g_{st}^{-1} \in V_{1}  $ and $ g_{s} \ ^{s}g_{t} g_{st}^{-1} \in V $, this implies that $ F_{s,t}(g) \in V $, hence $ g \in F_{s,t}^{-1}(V) $. From this it follows that $ F_{s,t}^{-1}(V) $ is an open subset of $ M(G,A)_{\text{po}} $, and so $ F_{s,t} $ is a continuous map; since $ s,t $ where arbitrary, $ F_{s,t} $ is a continuous map for all $ s,t \in G $.  
		
		Now since $ A $ is a Hausdorff space, the subset $ \{1\} \subseteq A $ is closed, so $ F_{s,t}^{-1}(\{1\}) \subseteq M(G,A)_{\text{po}}  $ is a closed subset by the continuity of $ F_{s,t} $. In particular, we have 
		\[Z^{1}(G,A)_{\text{po}}=\bigcap_{s,t \in G}F_{s,t}^{-1}(\{1\}),\] 
		which is an intersection of closed subsets of $ M(G,A)_{\text{po}} $, which implies that $ Z^{1}(G,A)_{\text{po}} $ is a closed subset of $ M(G,A)_{\text{po}} $; so $ Z^{1}_{\text{cts}}(G,A)_{\text{po}}\stackrel{\mathrm{def}}{=} Z^{1}(G,A)_{\text{po}}\cap M_{\text{cts}}(G,A)_{\text{po}}  $ is a closed subset of $ M_{\text{cts}}(G,A)_{\text{po}} $. Furthermore, recall $ A $ is a Hausdorff space by Definition \ref{D2}, so by Lemma \ref{L3} i), $ M_{\text{cts}}(G,A)_{\text{po}} $ is a Hausdorff space, which implies that $ Z^{1}_{\text{cts}}(G,A)_{\text{po}} $ is a Hausdorff space cf. \cite[p.~87, 13.8 Theorem. a)]{MR2048350}. 
		
		Now if $ G $ is compact and Hausdorff space, and $ A $ is a compact $ G $-group, which is evenly continuous with respect to $ G $, then $ M_{\text{cts}}(G,A)_{\text{po}} $ is a compact space by Lemma \ref{L5}, and since $ Z^{1}_{\text{cts}}(G,A)_{\text{po}} $ is closed subset of a compact space, we conclude that $ Z^{1}_{\text{cts}}(G,A)_{\text{po}} $ is also a compact space cf. \cite[p.~119, 17.14 Theorem. a)]{MR2048350}, which proves the lemma.   
	\end{proof}
	\begin{lemma}\label{L8}
		Let $ A $ be a $ G $-group and $ B $ any topological space. Then $  M_{\text{cts}} (B,A)_{\text{po}} $ is a $ G $-group.
	\end{lemma}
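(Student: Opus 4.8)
The plan is to put the pointwise group structure on $M_{\text{cts}}(B,A)$ and then verify the three requirements of Definition \ref{D2}: that it is a topological group, that it is Hausdorff, and that $G$ acts on it continuously and compatibly. Concretely I would define multiplication by $(fg)(b)\stackrel{\mathrm{def}}{=}m_{A}(f(b),g(b))$, inversion by $(f^{-1})(b)\stackrel{\mathrm{def}}{=}\iota_{A}(f(b))$, and take the distinguished point to be the constant map $b\mapsto 1$. Since $m_{A}$ and $\iota_{A}$ are continuous and $f,g$ are continuous, the maps $fg=m_{A}\circ(f,g)$ and $f^{-1}=\iota_{A}\circ f$ are again continuous, so these operations really do land in $M_{\text{cts}}(B,A)$; the group axioms hold because they hold pointwise in $A$.

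The key observation for the topological statements is the natural homeomorphism $M(B,A)_{\text{po}}\cong\prod_{b\in B}A_{b}$ whose projections are the evaluations $\varepsilon_{b}$: since $M_{\text{cts}}(B,A)_{\text{po}}$ carries the subspace topology, a map from a space $X$ into $M_{\text{cts}}(B,A)_{\text{po}}$ is continuous precisely when $\varepsilon_{b}$ composed with it is continuous for every $b\in B$. Applying this to multiplication $m\colon M_{\text{cts}}(B,A)_{\text{po}}\times M_{\text{cts}}(B,A)_{\text{po}}\to M_{\text{cts}}(B,A)_{\text{po}}$, the composite $\varepsilon_{b}\circ m$ sends $(f,g)$ to $m_{A}(f(b),g(b))=m_{A}\circ(\varepsilon_{b}\times\varepsilon_{b})(f,g)$, which is continuous because $\varepsilon_{b}$ and $m_{A}$ are; likewise $\varepsilon_{b}$ composed with inversion equals $\iota_{A}\circ\varepsilon_{b}$, which is continuous. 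Hence multiplication and inversion are continuous, so $M_{\text{cts}}(B,A)_{\text{po}}$ is a topological group, and it is Hausdorff by Lemma \ref{L3}, i), since $A$ is Hausdorff by Definition \ref{D2}.

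Finally, for the $G$-action I would set $ac(s,f)\stackrel{\mathrm{def}}{=}{}^{s}f$ with $({}^{s}f)(b)\stackrel{\mathrm{def}}{=}ac_{A}(s,f(b))$. Because $ac_{A}(s,-)$ is a continuous group automorphism of $A$, the map ${}^{s}f$ is continuous, so $ac$ indeed takes values in $M_{\text{cts}}(B,A)$, and conditions i)--ii) of Definition \ref{D2} are inherited pointwise from the $G$-group structure of $A$ (in particular $({}^{s}(fg))(b)={}^{s}(f(b)g(b))={}^{s}f(b)\,{}^{s}g(b)$, so the action respects the pointwise multiplication). The only point with any topological content is continuity of $ac\colon G\times M_{\text{cts}}(B,A)_{\text{po}}\to M_{\text{cts}}(B,A)_{\text{po}}$; by the criterion above it suffices that $\varepsilon_{b}\circ ac$ be continuous for each $b$, and $\varepsilon_{b}\circ ac(s,f)=ac_{A}(s,f(b))=ac_{A}\circ(\mathrm{id}_{G}\times\varepsilon_{b})(s,f)$ is a composite of continuous maps. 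I expect this last continuity check to be the main (indeed the only nontrivial) step; everything else is a pointwise transcription of the corresponding fact for $A$.
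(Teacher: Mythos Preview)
Your proof is correct and in fact somewhat cleaner than the paper's. The paper defines the same pointwise multiplication, inversion, and action, invokes Lemma~\ref{L3}\,i) for Hausdorffness, and notes that the algebraic axioms are inherited pointwise from $A$, exactly as you do. The difference lies in the continuity checks for multiplication and for the $G$-action: the paper verifies these by hand, taking a subbasic open set $\beta_{\text{cts}}(\{b\},V)$, unwinding the preimage, and producing an explicit open neighbourhood of an arbitrary element in it (using continuity of $m_{A}$ and $ac$ in $A$ to split the target). You instead exploit the identification $M(B,A)_{\text{po}}\cong\prod_{b}A_{b}$ and the universal property of the product/subspace topology, reducing continuity of each map to continuity of $\varepsilon_{b}$ composed with it, which factors through $m_{A}$ or $ac_{A}$ and $\varepsilon_{b}$. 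Your route is more conceptual and avoids the bookkeeping of the paper's argument; the paper's route is more self-contained in that it never appeals to the universal property. Either way the content is the same: continuity is ultimately inherited from that of $m_{A}$, $\iota_{A}$, and $ac_{A}$ coordinatewise.
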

	\begin{proof}
		Note $ A $ is a Hausdorff space, so $  M_{\text{cts}}(B,A)_{\text{po}} $ is a Hausdorff space by Lemma \ref{L3}, i). Since $ m_{A} $, $ \iota_{A} $ and $ ac $ are continuous maps, the maps 
		\[ m_{\text{po}}: M_{\text{cts}} (B,A)_{\text{po}} \times  M_{\text{cts}} (B,A)_{\text{po}} \longrightarrow M_{\text{cts}} (B,A)_{\text{po}} \]\[ \text{given by} \ (f,g)\longmapsto m_{A}(f,g),  \]
		\[ \iota_{\text{po}}:M_{\text{cts}} (B,A)_{\text{po}}  \longrightarrow M_{\text{cts}} (B,A)_{\text{po}} \ \ \text{given by} \ \  f\longmapsto \iota_{A}(f), \ \text{and}  \] 
		\[ ac_{\text{po}}: G\times M_{\text{cts}} (B,A)_{\text{po}}  \longrightarrow M_{\text{cts}} (B,A)_{\text{po}} \ \ \text{given by} \ \  (s,f)\longmapsto ac(s,f)=ac_{\mid_{\{s\}\times A}}f  \] 
		are well-defined. We shall directly check each of the maps above is continuous. Then checking the maps $ m_{\text{po}} $, $ \iota_{\text{po}} $, and $ ac_{\text{po}} $ make $  M_{\text{cts}} (B,A)_{\text{po}} $ a group on which $ G $ acts compatibly with the group operation follows from properties of $ m_{A} $, $ \iota_{A} $, and $ ac $. Note the continuity of $ \iota_{\text{po}} $ follows immediately from Lemma \ref{L1}. 
		
		Now fix a point $ b \in B $ and an open set $ V \subseteq A $. Let $ \beta_{\text{cts}}(\{b\},V) \subseteq M_{\text{cts}}(B,A)_{\text{po}}  $ be an element of the subbase for the point-open topology on $ M_{\text{cts}}(B,A)_{\text{po}}  $. In order to show $  m_{\text{po}} $ is continuous, we shall show that every element of $ m_{\text{po}}^{-1}(\beta_{\text{cts}}(\{b\},V)) $ has an open neighbourhood contained in $ m_{\text{po}}^{-1}(\beta_{\text{cts}}(\{b\},V))  $. Let $ (f,g) \in m_{\text{po}}^{-1}(\beta_{\text{cts}}(\{b\},V))  $. Then $ m_{A}(f(b),g(b)) \in V $ i.e., $ (f(b),g(b)) \in m_{A}^{-1}(V) $. Since $ m_{A} $ is continuous, $ m_{A}^{-1}(V) \subseteq A\times A $ is an open subset; therefore there exist open subsets $ U_{1},V_{1} \subseteq A $ with \[ (f(b),g(b)) \in U_{1} \times V_{1} \subseteq  m_{A}^{-1}(V);  \] hence $ f \in \beta_{\text{cts}}(\{b\},U_{1})   $ and $ g \in \beta_{\text{cts}}(\{b\},V_{1})  $. Now $ (f,g) \in \beta_{\text{cts}}(\{b\},U_{1}) \times \beta_{\text{cts}}(\{b\},V_{1}) $, which is an open subset of $ M_{\text{cts}} (B,A)_{\text{po}} \times  M_{\text{cts}} (B,A)_{\text{po}} $, and if \[ (f_{1},g_{1}) \in \beta_{\text{cts}}(\{b\},U_{1}) \times \beta_{\text{cts}}(\{b\},V_{1}),\] then $ (f_{1}(b),g_{1}(b)) \in U_{1} \times V_{1} \subseteq  m_{A}^{-1}(V)  $ so $ m_{\text{po}}(f_{1},g_{1}) \in \beta_{\text{cts}}(\{b\},V) $ hence, $ (f_{1},g_{1}) \in m_{\text{po}}^{-1}(\beta_{\text{cts}}(\{b\},V))  $. Therefore, \[ m_{\text{po}}^{-1}(\beta_{\text{cts}}(\{b\},V))  \subseteq M_{\text{cts}} (B,A)_{\text{po}} \times  M_{\text{cts}} (B,A)_{\text{po}} \] is an open subset, and hence $ m_{\text{po}} $ is a continuous map.
		
		Finally, to show the continuity of $  ac_{\text{po}} $, let $ (s,f) \in ac_{\text{po}} ^{-1}(\beta_{\text{cts}}(\{b\},V))  $. Then $ ac(s,f(b)) \in V $, i.e., $ (s,f(b)) \in ac^{-1}(V)  $, and $ ac^{-1}(V) $ is an open subset of $ G \times A $ since $ ac $ is continuous. Therefore, there exist open subsets $ U_{2} \subseteq G $ and $ V_{2} \subseteq A $ with $ (s , f(b)) \in U_{2} \times V_{2} \subseteq  ac^{-1}(V)  $. Now $ s \in U_{2}   $ and $ f \in \beta_{\text{cts}}(\{b\},V_{2})  $, hence $ (s,f) \in U_{2} \times \beta_{\text{cts}}(\{b\},V_{2}) $, which is an open subset of $ G\times M_{\text{cts}} (B,A)_{\text{po}} $,  and if $ (t,g) \in U_{2} \times \beta_{\text{cts}}(\{b\},V_{2})  $, then \[ (t,g(b)) \in U_{2} \times V_{2} \subseteq  ac_{\text{po}}^{-1}(\beta_{\text{cts}}(\{b\},V)),\] so $ ac_{\text{po}}(t,g) \in \beta_{\text{cts}}(\{b\},V) $ i.e.,  $ (t,g) \in ac_{\text{po}} ^{-1}(\beta_{\text{cts}}(\{b\},V))  $. This shows $ ac_{\text{po}}^{-1}(\beta_{\text{cts}}(\{b\},V)) $ is an open subset of $ G\times M_{\text{cts}} (B,A)_{\text{po}} $; therefore $ ac_{\text{po}} $ is a continuous map, and so the lemma follows.   
	\end{proof}
	\subsection{Cohomology Map}\label{SB4} In this subsection we define, for a $ G $-group $ A $, a continuous action of $ A $ on the right of $ Z^{1}_{\text{cts}}(G,A)_{\text{po}} $, and discuss some of the consequences that the continuity of this action entails.
	\begin{definition}[Cohomology Map]\label{D5}
		Let $ A $ be a $ G $-group. Define the \textit{cohomology map} of $ A $ by \[ cb=cb_{A} : Z^{1}_{\text{cts}}(G,A)_{\text{po}}\times A \longrightarrow M^{1}(G,A)_{\text{po}}, \ \ \text{given by} \ (a,x)\longmapsto cb(a,x)\stackrel{\mathrm{def}}{=} a.x \]
		such that $ a.x $ is given by  $ (a.x)_{s}\stackrel{\mathrm{def}}{=} x^{-1}a_{s} \ ^{s}x $, for all $ s \in G $. Two elements $ a,b \in Z^{1}_{\text{cts}}(G,A)_{\text{po}}  $ are said to be (A)-\textit{cohomologous} if there exists $ x \in A $ with $ a=b.x $. 
	\end{definition}
	\begin{proposition}\label{P1}
		Let $ A $ be a $ G $-group, and let $ cb $ be the map of Definition \ref{D5}. Then we have \[ cb : Z^{1}_{\text{cts}}(G,A)_{\text{po}}\times A \longrightarrow Z^{1}_{\text{cts}}(G,A)_{\text{po}}, \]
		and $ cb $ defines a continuous (for the product topology on $ Z^{1}_{\text{cts}}(G,A)_{\text{po}}\times A $) right action of $ A $ on $ Z^{1}_{\text{cts}}(G,A)_{\text{po}} $. 
	\end{proposition}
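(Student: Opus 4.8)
The plan is to establish the statement in three steps: first that $cb(a,x)=a.x$ is a continuous cocycle for every $a\in Z^{1}_{\text{cts}}(G,A)_{\text{po}}$ and $x\in A$ (so that the codomain may indeed be taken to be $Z^{1}_{\text{cts}}(G,A)_{\text{po}}$ rather than the larger space $M^{1}(G,A)_{\text{po}}$ of Definition \ref{D5}); second that $cb$ obeys the right-action axioms $a.1=a$ and $(a.x).y=a.(xy)$; and third that $cb$ is continuous for the product topology.

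For the first step, fix $a$ and $x$ and put $b=a.x$, so $b_{s}=x^{-1}a_{s}\,{}^{s}x$ for all $s\in G$. The map $G\to A$, $s\mapsto{}^{s}x=ac(s,x)$, is continuous, being the composite of $s\mapsto(s,x)$ with $ac$, and $s\mapsto a_{s}$ is continuous by hypothesis; since $b_{s}=m_{A}\big(\iota_{A}(x),\,m_{A}(a_{s},{}^{s}x)\big)$ and $m_{A},\iota_{A}$ are continuous, $b\in M_{\text{cts}}(G,A)$. The cocycle condition then reduces to the identity $b_{st}=b_{s}\,{}^{s}b_{t}$, which I would verify by expanding $b_{s}\,{}^{s}b_{t}=(x^{-1}a_{s}\,{}^{s}x)\,{}^{s}(x^{-1}a_{t}\,{}^{t}x)$ and simplifying using parts i) and ii) of Definition \ref{D2} (so that ${}^{s}$ acts as a group automorphism and ${}^{s}({}^{t}x)={}^{st}x$) together with the cocycle identity $a_{st}=a_{s}\,{}^{s}a_{t}$, obtaining $b_{s}\,{}^{s}b_{t}=x^{-1}a_{st}\,{}^{st}x=b_{st}$. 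Hence $b\in Z^{1}_{\text{cts}}(G,A)_{\text{po}}$.

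The action axioms are then immediate pointwise computations: $(a.1)_{s}=1^{-1}a_{s}\,{}^{s}1=a_{s}$, and $\big((a.x).y\big)_{s}=y^{-1}(x^{-1}a_{s}\,{}^{s}x)\,{}^{s}y=(xy)^{-1}a_{s}\,{}^{s}(xy)=\big(a.(xy)\big)_{s}$ by part ii) of Definition \ref{D2}. For continuity I would use that $Z^{1}_{\text{cts}}(G,A)_{\text{po}}$ carries the subspace topology from $M(G,A)_{\text{po}}\cong\prod_{s\in G}A_{s}$, so that a map with values there is continuous if and only if its composite with each evaluation $\varepsilon_{s}$ is continuous; and $\varepsilon_{s}\circ cb$ sends $(a,x)$ to $x^{-1}a_{s}\,{}^{s}x=m_{A}\big(\iota_{A}(x),m_{A}(\varepsilon_{s}(a),ac(s,x))\big)$, which is a composite of the continuous maps $\varepsilon_{s}$, $\iota_{A}$, $m_{A}$ and $x\mapsto ac(s,x)$ with the two product projections, hence continuous. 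I do not anticipate a genuine obstacle: the single point requiring care is the first step — that $cb$ takes values in the cocycle subspace at all, which hinges on the compatibility of the $G$-action with the multiplication of $A$ — whereas the continuity argument is light precisely because, via $M(G,A)_{\text{po}}\cong\prod_{s}A_{s}$, it reduces to coordinatewise continuity and sidesteps the direct subbasic-open-set verifications used in Lemmas \ref{L7} and \ref{L8}.
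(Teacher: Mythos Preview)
Your argument is correct. The first two steps --- that $a.x$ is again a continuous cocycle and that $cb$ satisfies the right-action axioms --- are carried out in the paper by essentially the same pointwise computations you give, so there is nothing to add there.

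The continuity argument, however, is handled differently. The paper proceeds in the style of Lemmas~\ref{L7} and~\ref{L8}: it fixes a subbasic open set $\beta_{\text{cts}}(\{s\},V)\subseteq Z^{1}_{\text{cts}}(G,A)_{\text{po}}$, takes a point $(a,x)$ in its preimage, and by repeatedly invoking the continuity of $m_{A}$, $\iota_{A}$ and $ac$ manufactures an explicit basic open neighbourhood of $(a,x)$ contained in $cb^{-1}(\beta_{\text{cts}}(\{s\},V))$. Your route instead exploits the identification $M(G,A)_{\text{po}}\cong\prod_{s\in G}A_{s}$ and the universal property of the product topology, reducing continuity of $cb$ to continuity of each composite $\varepsilon_{s}\circ cb$, which is then immediate since it is built from $\varepsilon_{s}$, $\iota_{A}$, $m_{A}$, $ac(s,\cdot)$ and the product projections. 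Your approach is shorter and conceptually cleaner; the paper's approach is more explicit and uniform with the hands-on verifications in the surrounding lemmas, but at the cost of a longer chase through nested open sets.
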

	\begin{proof}
		We first show the map $ cb $ is a well-defined action of $ A $ on the right of $ Z^{1}_{\text{cts}}(G,A)_{\text{po}} $. Recall we have \[ (a.x)_{s}\stackrel{\mathrm{def}}{=} x^{-1}a_{s} \ ^{s}x\stackrel{\mathrm{def}}{=} m_{A}(\iota_{A}(x),m_{A}(a_{s},ac(s,x)));\]
		therefore $ a.x $ can be given as the following composition of continuous maps 
		\[ 
		\begin{tikzcd}[row sep=1.1em , column sep=6em]
		G \arrow{r}[]{(\iota_{A}(x),a,ac_{\mid_{G\times\{x\}}})} & A \times A \times A \arrow{r}[]{m_{A}\times\text{id}_{A}} & A \times A \arrow{r}[]{m_{A}} & A. 
		\end{tikzcd} \]
		This shows $ a.x $ is a continuous map from $ G $ to $ A $. Now $ a.x $ is a cocycle since
		\[ (a.x)_{st} =x^{-1}a_{st} \ ^{st}x =x^{-1}a_{s} \ ^s a_{t} \ ^{s}(\ ^{t}x) = x^{-1}a_{s} \ ^{s}x \ ^{s}x^{-1} \ ^s a_{t} \ ^{s}(\ ^{t}x) \]
		\[=x^{-1}a_{s} \ ^{s}x \ ^{s} (x^{-1} a_{t} \ \ ^{t}x)= (a.x)_s \ ^{s}(a.x)_{t}, \ \text{for all} \ s,t \in G. \] 
		Above arguments show that the map $ cb $ is well-defined. To see $ cb $ defines a right action, note $ (a.1)_{s}=1^{-1}a_{s} \ ^{s}1=a_{s} $ for all $ s \in G $, so $ a.1=a $ for all $ a \in Z^{1}(G,A)_{\text{co}} $, also \[ (a.xy)_{s}= (xy)^{-1}a_{s} \ ^{s}(xy) = y^{-1} x^{-1} a_{s} \ ^{s}x \ ^{s}y= y^{-1} (a.x)_s\ ^{s}y = ((a.x).y)_{s} \]
		for all $ s \in G $ so $ a.xy= (a.x).y $, for all $ x,y \in A $ and all $ a \in Z^{1}(G,A)_{\text{co}} $. 
		
		Therefore, it remains to show that the map $ cb $ is continuous. To check the continuity of $ cb $ it suffices to show that the inverse image of any element of the subbase for the topology of $ Z^{1}_{\text{cts}}(G,A)_{\text{po}} $, via $ cb $, is an open subset of $  Z^{1}_{\text{cts}}(G,A)_{\text{po}}\times A $. Fix a point $ s \in G $ and an open subset $ V\subseteq A $, and let $ \beta_{\text{cts}}(\{s\},V)\stackrel{\mathrm{def}}{=}\{a\in Z_{\text{cts}}^{1}(G,A)_{\text{po}}: a_{s} \in V \} $ be an element of subbase for the point-open topology on $ Z^{1}_{\text{cts}}(G,A)_{\text{po}} $. We shall show every element of $ cb^{-1}(\beta_{\text{cts}}(\{s\},V)) $ has a neighbourhood contained in $ cb^{-1}(\beta_{\text{cts}}(\{s\},V)) $. Let $ (a,x) \in cb^{-1}(\beta_{\text{cts}}(\{s\},V)) $.  Then $ m_{A}(\iota_{A}(x),m_{A}(a_{s},ac(s,x))) \in V $, and so \[ (\iota_{A}(x),m_{A}(a_{s},ac(s,x))) \in m_{A}^{-1}(V). \] 
		
		Since $ m_{A} $ is a continuous map, $ m_{A}^{-1}(V) $ is an open subset of $ A\times A $; therefore there exist open subsets $ U_{1},V_{1} \subseteq A $ such that \[ (\iota_{A}(x),m_{A}(a_{s},ac(s,x))) \in U_{1}\times V_{1} \subseteq m_{A}^{-1}(V). \] 
		Thus, $ x \in \iota_{A}(U_{1}) $ and $ (a_{s},ac(s,x)) \in m_{A}^{-1}(V_{1}) $; once again, since $  m_{A} $ is continuous,  $ m_{A}^{-1}(V_{1}) $ is an open subset of $ A\times A $, so we can find open subsets $ U_{2},V_{2} \subseteq A $ such that \[ (a_{s},ac(s,x)) \in U_{2}\times V_{2} \subseteq m_{A}^{-1}(V_{1}). \] 
		Hence, $ a \in \beta_{\text{cts}}(\{s\},U_{2}) $ and $ (s,x) \in ac^{-1}(V_{2}) $, and since $ ac $ is a continuous map, there exist open subsets $ V_{3} \subseteq G $ and $ U_{3} \subseteq A $ with $ (s,x) \in V_{3} \times U_{3}\subseteq ac^{-1}(V_{2}) $.
		
		Now $ x \in U_{3}\cap \iota_{A}(U_{1}) $ and $ a \in \beta_{\text{cts}}(\{s\},U_{2}) $, so \[ (a,x) \in \beta_{\text{cts}}(\{s\},U_{2}) \times U_{3}\cap \iota_{A}(U_{1}), \] which is an open subset for the product topology on $ Z_{\text{cts}}^{1}(G,A)_{\text{po}}\times A $, and if \[ (b,y) \in \beta_{\text{cts}}(\{s\},U_{2}) \times U_{3}\cap \iota_{A}(U_{1}),\] then $ b(s) \in U_{2} $, $ y \in U_{3} $, and $ y^{-1} \in U_{1} $, so $ ac(s,y) \in V_{2} $, and therefore $ m_{A}(b(s),ac(s,y)) \in V_{1} $, hence $ m_{A}(y^{-1},m_{A}(b(s),ac(s,y)))=y^{-1}b(s) \ ^{s}y \in V $ which implies that $ (b.y)(s) \in V $ i.e., $ cb(b,y)= b.y \in \beta_{\text{cts}}(\{s\},V) $, and so $ (b,y) \in cb^{-1}(\beta_{\text{cts}}(\{s\},V)) $. This shows that $ cb^{-1}(\beta_{\text{cts}}(\{s\},V)) $ is an open subset of $ Z_{\text{cts}}^{1}(G,A)_{\text{po}}\times A $, which proves that $ cb $ is a continuous map and establishes the proposition. 
	\end{proof} 
	\begin{corollary}\label{C1}
		Let $ A $ be a $ G $-group, and $ cb : Z^{1}_{\text{cts}}(G,A)_{\text{po}}\times A \longrightarrow Z^{1}_{\text{cts}}(G,A)_{\text{po}} $ be the continuous action of $ A $ on $ Z_{\text{cts}}^{1}(G,A)_{\text{po}} $ as in Proposition \ref{P1}. Fix an element $ a \in Z_{\text{cts}}^{1}(G,A)_{\text{po}} $, and denote by $ \text{Stab}_{A}(a) \subseteq A $ the subset of element of $ A $ fixing $ a $. Then the subset $ \text{Stab}_{A}(a) \subseteq A $ is a closed subgroup. In particular, if $ A $ is a compact $ G $-group, then $ \text{Stab}_{A}(a) \subseteq A $ is a nonempty, compact, and Hausdorff space. 
	\end{corollary}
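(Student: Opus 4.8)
The plan is to verify the subgroup axioms using the action identities established in the proof of Proposition \ref{P1}, then deduce closedness by exhibiting $\text{Stab}_{A}(a)$ as the preimage of a point under a continuous map into a Hausdorff space, and finally read off compactness in the special case.

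First I would record that $\text{Stab}_{A}(a) \subseteq A$ is a subgroup. Since $cb$ is a right action (Proposition \ref{P1}), we have $a.1 = a$, so $1 \in \text{Stab}_{A}(a)$. If $x, y \in \text{Stab}_{A}(a)$, then $a.(xy) = (a.x).y = a.y = a$, hence $xy \in \text{Stab}_{A}(a)$. If $x \in \text{Stab}_{A}(a)$, then $a = a.1 = a.(x x^{-1}) = (a.x).x^{-1} = a.x^{-1}$, so $x^{-1} \in \text{Stab}_{A}(a)$. Thus $\text{Stab}_{A}(a)$ is a subgroup of $A$.

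Next, for closedness, consider the map $\mu_{a} : A \longrightarrow Z^{1}_{\text{cts}}(G,A)_{\text{po}}$ given by $x \longmapsto cb(a,x) = a.x$; this is the restriction of the continuous map $cb$ along the inclusion $A \cong \{a\}\times A \hookrightarrow Z^{1}_{\text{cts}}(G,A)_{\text{po}}\times A$, hence continuous since restriction of a continuous map to a subspace is continuous. By Lemma \ref{L7} (or Lemma \ref{L3}, i)), the space $Z^{1}_{\text{cts}}(G,A)_{\text{po}}$ is Hausdorff, so the singleton $\{a\}$ is closed; therefore $\text{Stab}_{A}(a) = \mu_{a}^{-1}(\{a\})$ is a closed subset of $A$.

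Finally, if $A$ is a compact $G$-group, then $A$ is compact and, by Definition \ref{D2}, Hausdorff; so $\text{Stab}_{A}(a)$ is a closed subset of the compact space $A$, hence compact cf. \cite[p.~119, 17.14 Theorem. a)]{MR2048350}, and a subspace of the Hausdorff space $A$, hence Hausdorff; moreover it is nonempty since $1 \in \text{Stab}_{A}(a)$. No step here presents a genuine difficulty; the only point requiring care is to invoke the Hausdorffness of the target $Z^{1}_{\text{cts}}(G,A)_{\text{po}}$ before asserting that the preimage of a point under $\mu_{a}$ is closed.
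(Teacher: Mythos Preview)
Your proof is correct and follows essentially the same approach as the paper: the paper also restricts $cb$ to $\{a\}\times A$, uses Hausdorffness of $Z^{1}_{\text{cts}}(G,A)_{\text{po}}$ (via Lemma \ref{L3}, i)) to conclude $\{a\}$ is closed, and takes the preimage. The only difference is that the paper merely asserts the subgroup property without spelling out the action identities as you do, and the paper additionally supplies an alternative direct argument for closedness by exhibiting an open neighbourhood of each $y\in A\setminus\text{Stab}_{A}(a)$ inside the complement.
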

	\begin{proof}
		The subset $ \text{Stab}_{A}(a) \subseteq A $ is a subgroup. We shall show that $ \text{Stab}_{A}(a) \subseteq A $ is a closed subset. Recall we have the continuous map $ cb : Z_{\text{cts}}^{1}(G,A)_{\text{po}}\times A \longrightarrow Z_{\text{cts}}^{1}(G,A)_{\text{po}} $ by Proposition \ref{P1}, so we obtain, by restriction of $ cb $ to the subset $ \{a\} \times A $, a continuous map  $ cb_{\mid_{\{a\} \times A }} : \{a\} \times A \longrightarrow Z_{\text{cts}}^{1}(G,A)_{\text{po}} $. 	
		Note $ Z_{\text{cts}}^{1}(G,A)_{\text{po}} $ is a Hausdorff space, which follows from Lemma \ref{L3} i); this implies that $ \{a\} \subseteq Z_{\text{cts}}^{1}(G,A)_{\text{po}} $ is a closed subset. Hence $ cb_{\mid_{\{a\} \times A }}^{-1}(\{a\}) \subseteq \{a\} \times A $ is a closed subset, but $ cb_{\mid_{\{a\} \times A }}^{-1}(\{a\}) \cong\text{Stab}_{A}(a) $ under the homeomorphism $ \{a\} \times A\cong A $, so $ \text{Stab}_{A}(a) \subseteq A $ is a closed subset. 
		
		Alternatively, one can directly prove every element $ y \in A \setminus \text{Stab}_{A}(a) $ has a neighbourhood contained in $ A \setminus \text{Stab}_{A}(a) $ as follows. If $ y \in A \setminus \text{Stab}_{A}(a) $, then $ a.y\neq a $, so there exists $ s\in G $ with $ y^{-1}a_{s} \ ^{s}y\neq a_{s} $, which implies $ (y^{-1}a_{s}, ^{s}y) \notin m_{A}^{-1}(\{a_{s}\}) $. The space $ A $ is Hausdorff, so $ \{a_{s}\} \subset A $ is a closed subset; therefore $  m_{A}^{-1}(\{a_{s}\}) \subseteq A\times A $ is a closed subset of $ A\times A $; thus, $ A\times A \setminus m_{A}^{-1}(\{a_{s}\}) $ is an open subset of $ A\times A $. In particular, $ A\times A \setminus m_{A}^{-1}(\{a_{s}\}) $ contains the element $ (y^{-1}a_{s}, ^{s}y) $, so there exist open subsets $ U,V \subset A $ with $ y^{-1}a_{s} \in U $ and $ ^{s}y \in V $, and $ U\times V \subseteq A\times A \setminus m_{A}^{-1}(\{a_{s}\}) $. Therefore, $ y \in a_{s}U^{-1} \cap \ ^{s^{-1}}V $. Now since $ A $ is a topological group $ a_{s}U^{-1} $ is an open subset of $ A $, and $ G $ acting continuously on $ A $ implies $ ^{s^{-1}}V $ is an open subset of $ A $, hence $ a_{s}U^{-1} \cap \ ^{s^{-1}}V $ is an open neighbourhood of $ y $. 
		
		Note we have $ U\times V \cap m_{A}^{-1}(\{a_{s}\}) = \emptyset  $, so $ a_{s} \notin m_{A}(U\times V)=UV $. Now given $ z \in a_{s}U^{-1} \cap \ ^{s^{-1}}V $, we have $ z=a_{s}u^{-1}= \ ^{s^{-1}}v $ for some $ u \in U $ and $ v \in V $, so $ (a.z)_{s}= z^{-1}a_{s} \ ^{s}z=ua_{s}^{-1}a_{s} \ ^{s}(^{s^{-1}}v)=uv \neq a_{s} $ i.e., $ (a.z)_{s}\neq a_{s} $, so $ z \notin \text{Stab}_{A}(a) $; thus $ a_{s}U^{-1} \cap \ ^{s^{-1}}V \subset A \setminus \text{Stab}_{A}(a) $, which shows $ A\setminus\text{Stab}_{A}(a) $ is an open subset of $ A $; therefore $ \text{Stab}_{A}(a) $ is a closed subset of $ A $. 
		
		Now a closed subset of a compact and Hausdorff space is a compact and Hausdorff space cf. \cite[p.~119, 17.14 Theorem. a)]{MR2048350} \& cf. \cite[p.~87, 13.8 Theorem. a)]{MR2048350}; so if $ A $ is a compact $ G $-group, then $ \text{Stab}_{A}(a) \subseteq A $ is a  nonempty, compact, and Hausdorff space.
	\end{proof}
	\begin{corollary}\label{C2}
		Let $ A $ be a $ G $-group. Then the subset $ A^{G} \subseteq A $, the fixed subgroup of $ A $ under the action of $ G $, is a closed.
	\end{corollary}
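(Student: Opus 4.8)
The plan is to recognise $A^{G}$ as a stabiliser for the action $cb$ of Proposition \ref{P1} and then to quote Corollary \ref{C1}. Let $a_{1}\in Z^{1}_{\text{cts}}(G,A)_{\text{po}}$ denote the trivial cocycle, i.e.\ the constant map with $a_{1,s}=1$ for all $s\in G$. By the formula defining $cb$ in Definition \ref{D5}, for every $x\in A$ and $s\in G$ one has $(a_{1}.x)_{s}=x^{-1}a_{1,s}\,{}^{s}x=x^{-1}\,{}^{s}x$, so $a_{1}.x=a_{1}$ if and only if ${}^{s}x=x$ for all $s\in G$, that is, if and only if $x\in A^{G}$. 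Hence $A^{G}=\text{Stab}_{A}(a_{1})$, and Corollary \ref{C1} then gives at once that $A^{G}$ is a closed subgroup of $A$.

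Should one prefer a self-contained argument, I would mimic the proof of Lemma \ref{L7}. For each $s\in G$ set $F_{s}:A\longrightarrow A$, $F_{s}(x)\stackrel{\mathrm{def}}{=}x^{-1}\,{}^{s}x=m_{A}(\iota_{A}(x),ac(s,x))$. This factors as $x\longmapsto(\iota_{A}(x),ac_{\mid_{\{s\}\times A}}(x))\longmapsto m_{A}(\iota_{A}(x),{}^{s}x)$, a composition of continuous maps, so each $F_{s}$ is continuous. Since $A$ is Hausdorff, $\{1\}$ is closed in $A$, whence each $F_{s}^{-1}(\{1\})$ is a closed subset of $A$; as $A^{G}=\bigcap_{s\in G}F_{s}^{-1}(\{1\})$ is an intersection of closed subsets, it is closed. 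That $A^{G}$ is a subgroup is immediate from the identities ${}^{s}(xy)={}^{s}x\,{}^{s}y$ and ${}^{s}(x^{-1})=({}^{s}x)^{-1}$ guaranteed by Definition \ref{D2}.

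I do not anticipate any real difficulty here: the statement is a direct corollary, and the only point requiring a little care --- the continuity of the auxiliary maps $F_{s}$ (equivalently, of $cb$ restricted to $\{a_{1}\}\times A$) --- is already packaged inside Corollary \ref{C1} in the first approach, and follows in the second approach from the continuity of $m_{A}$, $\iota_{A}$ and $ac$ together with the natural identification $M(G,A)_{\text{po}}\cong\prod_{s\in G}A_{s}$.
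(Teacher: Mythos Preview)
Your first argument is correct and is essentially identical to the paper's own proof: the paper also identifies $A^{G}$ with $\text{Stab}_{A}(a_{1})$ by the same computation $(a_{1}.x)_{s}=x^{-1}\,{}^{s}x$ and then invokes Corollary~\ref{C1}. Your alternative self-contained argument via the maps $F_{s}$ is also fine but not needed.
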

	\begin{proof}
		By Corollary \ref{C1}, the stabiliser of each element of $ Z_{\text{cts}}^{1}(G,A)_{\text{co}} $ is a closed subgroup of $ A $, so to prove $ A^{G} \subseteq A $ is a closed subset, it suffices to show $ A^{G}=\text{Stab}_{A}(a_{1}) $. But this is immediate, since let $ x \in A^{G} $. Then $ ^{s}x=x $ for all $ s \in G $ i.e., $ x^{-1} \ ^{s}x=1 $ for all $ s \in G $ so $ x^{-1} a_{1,s} \ ^{s}x=a_{1,s} $  for all $ s \in G $ i.e., $ a_{1}.x=a_{1} $ so $ x \in \text{Stab}_{A}(a_{1}) $. Conversely, if $ x \in \text{Stab}_{A}(a_{1}) $, then $ x^{-1} a_{1,s} \ ^{s}x=a_{1,s} $  for all $ s \in G $ so $ ^{s}x=x $ for all $ s \in G $, hence $ x \in A^{G} $. 
	\end{proof}
	\begin{corollary}\label{C3}
		Let $ A $ be a compact $ G $-group. Fix $ a \in Z_{\text{cts}}^{1}(G,A)_{\text{po}} $, and let $ \text{Orb}_{A}(a) $ be the orbit of the element $ a $ under the action of $ A $. Then the subset $ \text{Orb}_{A}(a) \subseteq Z_{\text{cts}}^{1}(G,A)_{\text{po}} $, is a nonempty, compact, and Hausdorff subspace. 
	\end{corollary}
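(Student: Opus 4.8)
The plan is to realise $\text{Orb}_{A}(a)$ as the continuous image of a compact space, and then to invoke the two elementary facts that continuous images of compact spaces are compact and that subspaces of Hausdorff spaces are Hausdorff.

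First I would recall from Proposition \ref{P1} that $cb : Z^{1}_{\text{cts}}(G,A)_{\text{po}}\times A \longrightarrow Z^{1}_{\text{cts}}(G,A)_{\text{po}}$ is a continuous right action, and restrict it to the slice $\{a\}\times A$; composing with the canonical homeomorphism $A\cong\{a\}\times A$ yields a continuous map $A\longrightarrow Z^{1}_{\text{cts}}(G,A)_{\text{po}}$, $x\longmapsto a.x$, whose image is exactly $\text{Orb}_{A}(a)=\{a.x : x\in A\}$. Since $A$ is a (nonempty) compact space, $\{a\}\times A$ is a nonempty compact space, and hence so is $\text{Orb}_{A}(a)$, being the continuous image of a compact space cf. \cite[p.~118, 17.7 Theorem]{MR2048350}; this compactness is with respect to the subspace topology inherited from $Z^{1}_{\text{cts}}(G,A)_{\text{po}}$, which is the topology under consideration.

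Nonemptiness is then immediate, as $a=a.1\in\text{Orb}_{A}(a)$ because $cb$ is a right action. For the Hausdorff property I would note that $A$ is Hausdorff by Definition \ref{D2}, so by Lemma \ref{L3}, i) (equivalently by Lemma \ref{L7}) the ambient space $Z^{1}_{\text{cts}}(G,A)_{\text{po}}$ is Hausdorff, and a subspace of a Hausdorff space is Hausdorff cf. \cite[p.~87, 13.8 Theorem. a)]{MR2048350}. One could additionally observe that $\text{Orb}_{A}(a)$, being a compact subset of a Hausdorff space, is closed in $Z^{1}_{\text{cts}}(G,A)_{\text{po}}$, although the statement does not require this.

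There is essentially no obstacle here; the only points deserving a word of care are that ``continuous image of a compact space is compact'' is being applied with the subspace topology on the codomain, and that the slice map $x\mapsto a.x$ is genuinely continuous — both follow at once from Proposition \ref{P1} and routine point-set topology. This corollary is the orbit-counterpart of Corollary \ref{C1}: there continuity of $cb$ in the second variable was used to pull the closed point $\{a\}$ back to a closed stabiliser, whereas here the same continuity is used to push compactness of $A$ forward to a compact orbit.
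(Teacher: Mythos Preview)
Your proof is correct. It is, however, a genuinely more direct argument than the paper's. The paper does not simply push compactness of $A$ forward along $x\mapsto a.x$; instead it factors the restricted map $cb_{\mid_{\{a\}\times A}}$ through the quotient $A/\text{Stab}_{A}(a)$, shows (using Corollary~\ref{C1}) that this quotient is compact Hausdorff, and then argues that the induced continuous bijection $\overline{cb}_{\mid_{\{a\}\times A}}:A/\text{Stab}_{A}(a)\to\text{Orb}_{A}(a)$ is a homeomorphism because it goes from a compact space to a Hausdorff one. Your route bypasses the stabiliser and the quotient entirely and uses only that a continuous image of a compact space is compact together with the fact that a subspace of a Hausdorff space is Hausdorff; this is shorter and fully sufficient for the stated corollary. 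What the paper's longer argument buys is the extra information that $\text{Orb}_{A}(a)\cong A/\text{Stab}_{A}(a)$ as topological spaces, i.e.\ a topological orbit--stabiliser theorem, though this homeomorphism is not explicitly invoked again in the sequel.
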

	\begin{proof}
		By Proposition \ref{P1}, the map $ cb: Z_{\text{cts}}^{1}(G,A)_{\text{po}}\times A \longrightarrow Z_{\text{cts}}^{1}(G,A)_{\text{po}} $ is a continuous action. Therefore, we obtain a continuous map \[ cb_{\mid_{\{a\} \times A }} : \{a\} \times A \longrightarrow Z_{\text{cts}}^{1}(G,A)_{\text{po}}, \]	
		by restriction of $ cb $ to subspace $ \{a\} \times A \subseteq Z^{1}(G,A)_{\text{po}}\times A $. In particular, the image of $ cb_{\mid_{\{a\} \times A }}  $ is $ \text{Orb}_{A}(a) $. Therefore, we have a surjective continuous map
		\[ cb_{\mid_{\{a\} \times A }} : \{a\} \times A \longrightarrow \text{Orb}_{A}(a), \]
		and $ cb_{\mid_{\{a\} \times A }}(a,x)=cb_{\mid_{\{a\} \times A }}(a,y) $ if and only if $ xy^{-1} \in \text{Stab}_{A}(a) $. Hence we have a commutative diagram 
		\[ 
		\begin{tikzcd}[row sep=1.5em , column sep=3em]
		\{a\} \times A \arrow[two heads]{d}[]{}\arrow{r}[]{cb_{\mid_{\{a\} \times A }}} & \text{Orb}_{A}(a) \arrow[equal]{d}[]{} \\ \{a\} \times A/\text{Stab}_{A}(a) \arrow{r}[]{\overline{cb}_{\mid_{\{a\} \times A }}} & \text{Orb}_{A}(a).
		\end{tikzcd} \]
		
		Note $ A $ is a compact and Hausdorff space, so by Corollary \ref{C1}, $ \text{Stab}_{A}(a) $ is a compact and Hausdorff topological group. Now $ \text{Stab}_{A}(a)  $ acts continuously on the right of $ A $, then $ A/\text{Stab}_{A}(a) $ is a compact space being continuous image of the compact space $ A $ under the continuous open map $ A \twoheadrightarrow A/\text{Stab}_{A}(a) $ cf. \cite[p.~119, Theorem 17.1]{MR2048350}, and $ A/\text{Stab}_{A}(a) $ is Hausdorff cf. \cite[p.~38, Theorem 3.1, (1)]{MR0413144}. Since $ Z_{\text{cts}}^{1}(G,A)_{\text{po}} $ is a Hausdorff space, $ \text{Orb}_{A}(a) \subseteq Z_{\text{cts}}^{1}(G,A)_{\text{po}} $ is a Hausdorff space cf. \cite[p.~87, 13.8 Theorem. a)]{MR2048350}. Now the map \[ \overline{cb}_{\mid_{\{a\} \times A }} :\{a\} \times A/\text{Stab}_{A}(a)\longrightarrow \text{Orb}_{G}(a) \] is a continuous bijection from a compact space to a Hausdorff space, so $ \overline{cb}_{\mid_{\{a\} \times A }} $ is a homeomorphism cf. \cite[p.~123, 17.14 Theorem.]{MR2048350}; therefore \[ A/\text{Stab}_{A}(a)\cong \{a\} \times A/\text{Stab}_{A}(a) \cong \text{Orb}_{A}(a). \] 
		
		The above shows $ \text{Orb}_{A}(a) $ is a compact and Hausdorff space, it contains $ a $, so it is a nonempty, compact, and Hausdorff space. 
	\end{proof}
	\begin{remark}\label{R3}
		Recall a $ G $-group homomorphism $ \varphi : A\longrightarrow A' $ between $ G $-groups $ A $ and $ A' $ induces a continuous map $ \varphi_{\ast} : Z^{1}_{\text{cts}}(G,A)_{\text{po}}\longrightarrow Z^{1}_{\text{cts}}(G,A')_{\text{po}} $ by Remark \ref{R1}. Now for $ (a,x) \in  Z^{1}(G,A)_{\text{po}}\times A $ we have $ \varphi_{\ast} cb_{A}(a,x)=cb_{A'}(\varphi_{\ast}(a),\varphi(x)) $ i.e., $ \varphi $ induces the following commutative diagram \[ 
		\begin{tikzcd}[row sep=1.5em , column sep=2.0em]
		Z^{1}_{\text{cts}}(G,A)_{\text{po}}\times A \arrow[]{d}[]{cb_{A}}\arrow{r}[]{\varphi_{\ast} \times \varphi } & Z^{1}_{\text{cts}}(G,A')_{\text{po}}\times A' \arrow{d}[]{cb_{A'}} \\ Z^{1}_{\text{cts}}(G,A)_{\text{po}} \arrow{r}[]{\varphi_{\ast}} & Z^{1}_{\text{cts}}(G,A')_{\text{po}}.
		\end{tikzcd} \]
	\end{remark}	
	\subsection{First Nonabelian Continuous Cohomology Pointed Spaces}\label{SB5} 
	In this subsection, for a $ G $-group $ A $, we define the first nonabelian continuous cohomology pointed space of $ G $ with coefficients in $ A $ as the quotient space $ Z_{\text{cts}}^{1}(G,A)_{\text{po}} $ by the continuous action of $ A $ given by $ cb $, and we investigate the properties this space. But first we note that the zeroth cohomology group of $ G $ with coefficients in a $ G $-group $ A $ is defined as $  H_{\text{cts}}^{0}(G,A)_{\text{po}}=H_{\text{cts}}^{0}(G,A)\stackrel{\mathrm{def}}{=} A^{G} $, which is a closed subgroup of $ A $ by Corollary \ref{C2}.     
	\begin{definition}[First Nonabelian Continuous Cohomology Pointed Spaces]\label{D6}
		\footnote{Compare with \cite[p.~45, 5.1]{MR1867431}.} Let $ A $ be a $ G $-group. Then we define the \textit{first nonabelian continuous cohomology pointed space of $ G $ with coefficients in $ A $} as the quotient space $ Z_{\text{cts}}^{1}(G,A)_{\text{po}} $ by the action of $ A $ i.e., \[ H_{\text{cts}}^{1}(G,A)_{\text{po}}\stackrel{\mathrm{def}}{=} Z_{\text{cts}}^{1}(G,A)_{\text{po}}/A. \] 
	\end{definition}
	
	In other words, we have $ H_{\text{cts}}^{1}(G,A)_{\text{po}}=Z_{\text{cts}}^{1}(G,A)_{\text{po}}/\sim $,  where for $ a,b \in Z_{\text{cts}}^{1}(G,A)_{\text{po}} $ we write $ a\sim b $ if $ a $ and $ b $ are cohomologous. The set $ H_{\text{cts}}^{1}(G,A)_{\text{po}} $ is endowed with the quotient topology; in particular, the natural surjective map \[\pi_{A} : Z_{\text{cts}}^{1}(G,A)_{\text{po}}\longrightarrow H_{\text{cts}}^{1}(G,A)_{\text{po}}, \] which sends an element $ a \in Z_{\text{cts}}^{1}(G,A)_{\text{po}} $ to its orbit under the action of $ A $ is a continuous open map cf. \cite[p.~37]{MR0413144}. If we denote the image of an element $ a \in Z_{\text{cts}}^{1}(G,A)_{\text{po}} $ under $ \pi_{A} $ by $ [a] $, then the class of trivial cocycle $ [a_{1}] $ gives a distinguished point of $ H_{\text{cts}}^{1}(G,A)_{\text{po}} $.
	\begin{remark}\label{R4}
		Recall that by Remark \ref{R3} a $ G $-group homomorphism $ \varphi : A\longrightarrow A' $ between $ G $-groups $ A $ and $ A' $ induces a commutative \[ 
		\begin{tikzcd}[row sep=1.5em , column sep=2.0em]
		Z^{1}_{\text{cts}}(G,A)_{\text{po}}\times A \arrow[]{d}[]{cb_{A}}\arrow{r}[]{\varphi_{\ast} \times \varphi } & Z^{1}_{\text{cts}}(G,A')_{\text{po}}\times A' \arrow{d}[]{cb_{A'}} \\ Z^{1}_{\text{cts}}(G,A)_{\text{po}} \arrow{r}[]{\varphi_{\ast}} & Z^{1}_{\text{cts}}(G,A')_{\text{po}}.
		\end{tikzcd} \]
		Therefore, $ \varphi $ induces a commutative diagram
		\[ 
		\begin{tikzcd}[row sep=1.5em , column sep=2.0em]
		Z^{1}_{\text{cts}}(G,A)_{\text{po}} \arrow[two heads]{d}[]{\pi_{A}}\arrow{r}[]{\varphi_{\ast} } & Z^{1}_{\text{cts}}(G,A')_{\text{po}} \arrow[two heads]{d}[]{\pi_{A'}} \\ H^{1}_{\text{cts}}(G,A)_{\text{po}} \arrow{r}[]{\overline{\varphi}_{\ast}} & H^{1}_{\text{cts}}(G,A')_{\text{po}},
		\end{tikzcd} \]
		where for $ [a] \in H^{1}_{\text{cts}}(G,A)_{\text{po}} $ we set $ \overline{\varphi}_{\ast}([a])\stackrel{\mathrm{def}}{=} [\varphi_{\ast}(a)] $. The map $ \overline{\varphi}_{\ast} $ is continuous as follows. Let $ V \subseteq H^{1}_{\text{cts}}(G,A')_{\text{po}}  $ be an open subset. Then by the commutativity of the diagram above we have $ \overline{\varphi}_{\ast}\pi_{A}=\pi_{A'}\varphi_{\ast} $, so \[ (\overline{\varphi}_{\ast}\pi_{A})^{-1}(V)=\pi_{A}^{-1}(\overline{\varphi}_{\ast}^{-1}(V))=(\pi_{A'}\varphi_{\ast})^{-1}(V)=\varphi_{\ast}^{-1}(\pi_{A'}^{-1}(V)), \]
		since $ \pi_{A} $ is surjective, applying  $ \pi_{A} $ to the both side of equation above we find
		\[ \pi_{A}(\pi_{A}^{-1}(\overline{\varphi}_{\ast}^{-1}(V)))=\overline{\varphi}_{\ast}^{-1}(V) =\pi_{A}(\varphi_{\ast}^{-1}(\pi_{A'}^{-1}(V)) ).\]
		Now since $ \pi_{A'} $ and $ \varphi_{\ast} $ are continuous maps and $ \pi_{A} $ is an open map we conclude that $ \overline{\varphi}_{\ast}^{-1}(V) $ is an open subset of $ H^{1}_{\text{cts}}(G,A)_{\text{po}} $.
		
		Similarly, if $ \psi : H\longrightarrow G $ is a continuous group homomorphism, by Remark \ref{R1} and calculations similar to the one above, one obtains a commutative diagram of continuous maps
		\[ 
		\begin{tikzcd}[row sep=1.5em , column sep=2.0em]
		Z^{1}_{\text{cts}}(G,A)_{\text{po}} \arrow[two heads]{d}[]{\pi_{A}}\arrow{r}[]{\psi^{\ast}} & Z^{1}_{\text{cts}}(H,A)_{\text{po}} \arrow[two heads]{d}[]{\pi_{A}} \\ H^{1}_{\text{cts}}(G,A)_{\text{po}} \arrow{r}[]{\overline{\psi}^{\ast}} & H^{1}_{\text{cts}}(H,A)_{\text{po}},
		\end{tikzcd} \] 
		where for $ [a] \in H^{1}_{\text{cts}}(G,A)_{\text{po}} $ we set $ \overline{\psi}^{\ast}([a])\stackrel{\mathrm{def}}{=} [\psi^{\ast}(a)] $. 	  
	\end{remark}
	
	Next remark discusses the well-known correspondence between cohomology classes of continuous cocycles and isomorphism classes of torsors.
	\begin{remark}\label{R5}
		Let $ \text{Tors}_{\text{cts}}^{G}(A) $ be the set of continuous $ G $-torsors under $ A $ as in Definition \ref{D4}. Then by Remark \ref{R2} we have a homeomorphism 
		\[\mathcal{T}: Z^{1}_{\text{cts}}(G,A)_{\text{po}}\longrightarrow \text{Tors}^{G}_{\text{cts}}(A)_{\text{po}}. \]
		
		Now suppose for $ a,b \in Z^{1}_{\text{cts}}(G,A)_{\text{po}}  $, we have $ a=b.x $ for some $ x \in A $. Then we obtain a homeomorphism of $ G $,$A$-spaces \[ _{b}P\longrightarrow \ _{a}P \ \ \text{given by} \ \  p\longmapsto x^{-1}p\stackrel{\mathrm{def}}{=} m_{A}(x^{-1},p). \] 
		
		Conversely, given two $ G $-torsors $ P_{1}, P_{2} \in \text{Tors}^{G}_{\text{cts}}(A)_{\text{po}} $ and a homeomorphism of $ G $,$A$ -spaces $ \alpha :  P_{1} \longrightarrow  P_{2} $. Then by Remark \ref{R2} we can find canonical homeomorphisms of $ G $,$A$ -spaces  $ P_{1}\cong  \ _{a}P  $ and $ P_{2}\cong \  _{b}P  $ for some $ a,b \in Z^{1}_{\text{cts}}(G,A)_{\text{po}} $. Therefore, $ \alpha $ induces a homeomorphism of $ G $,$A$-spaces (also denoted by $ \alpha $), $ \alpha: \ _{a}P \longrightarrow \  _{b}P $. In particular, since $ \alpha(p.x)=\alpha(p).x $, we see that $ \alpha(p)=\alpha(1p)=\alpha(1)p $ i.e., $ \alpha $ is just multiplication on the left on $ A $ by $ \alpha(1) $. Further, we must have $ \alpha(\ ^{s_{'}}p)=\ ^{s_{'}}\alpha(p) $, so 
		\[ \alpha(\ ^{s_{'}}p)=\alpha(a_{s} \ ^{s}p)=\alpha(1)a_{s} \ ^{s}p=\ ^{s_{'}}\alpha(p)=b_{s}\ ^{s}\alpha(p)=b_{s}\ ^{s}\alpha(1) \ ^{s}p, \] 
		Therefore, $ a_{s} =\alpha(1)^{-1}b_{s}\ ^{s}\alpha(1) $ for all $ s \in G $ i.e., $ a=b.\alpha(1) $. Let $ \text{Tors}^{G}_{\text{cts}}(A)_{\text{po}}/\text{Isom} $ be the set of $ G $,$A$-isomorphism classes of $ G $-torsors under $ A $ endowed with quotient topology, and denote by $ [P] $ the $ G $,$A$-isomorphism classes of a $ G $-torsors $ P $. Then the above arguments shows that $ \mathcal{T} $ descends to a homeomorphism \[  H^{1}_{\text{cts}}(G,A)_{\text{po}}\longrightarrow  \text{Tors}^{G}_{\text{cts}}(A)_{\text{po}}/\text{Isom} \ \ \text{given by} \ \ [a]\longmapsto [\ _{a}P ]. \]      
	\end{remark}
	\begin{lemma}\label{L9}
		Suppose $ A $ is a compact $ G $-group. Then
		\begin{enumerate}[i)]
			\item The topological space $ H_{\text{cts}}^{1}(G,A)_{\text{po}} $ is Hausdorff.
			\item The topological space $ Z_{\text{cts}}^{1}(G,A)_{\text{po}} $ is compact if and only if the topological space $ H_{\text{cts}}^{1}(G,A)_{\text{po}} $ is compact; in particular if $ G $ is compact and Hasudorff, and $ A $ is evenly continuous with respect to $ G $, then $ H_{\text{cts}}^{1}(G,A)_{\text{po}} $ is a compact space.	
		\end{enumerate} 
	\end{lemma}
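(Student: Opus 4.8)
The plan is to derive both statements from the good behaviour of the continuous right action of the compact group $A$ on $Z^1_{\text{cts}}(G,A)_{\text{po}}$ supplied by Proposition \ref{P1}.

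For i), I would invoke the standard fact that, since the quotient map $\pi_A : Z^1_{\text{cts}}(G,A)_{\text{po}} \to H^1_{\text{cts}}(G,A)_{\text{po}}$ is \emph{open} (recall this from the discussion following Definition \ref{D6}), the quotient $H^1_{\text{cts}}(G,A)_{\text{po}}$ is Hausdorff if and only if the graph $R\stackrel{\mathrm{def}}{=}\{(a,b): [a]=[b]\}$ of the cohomology relation is closed in $Z^1_{\text{cts}}(G,A)_{\text{po}}\times Z^1_{\text{cts}}(G,A)_{\text{po}}$. To prove $R$ closed, I would first note that $[a]=[b]$ exactly when $b=a.x$ for some $x\in A$ (since $A$ is a group and $a,b$ are cohomologous precisely when $a=b.y$ for some $y$), so $R$ is the image of
\[
\widetilde R \stackrel{\mathrm{def}}{=} \{(a,b,x)\in Z^1_{\text{cts}}(G,A)_{\text{po}}\times Z^1_{\text{cts}}(G,A)_{\text{po}}\times A : a.x=b\}
\]
under the projection that forgets the $A$-coordinate. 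Now $\widetilde R$ is the preimage of the diagonal of $Z^1_{\text{cts}}(G,A)_{\text{po}}\times Z^1_{\text{cts}}(G,A)_{\text{po}}$ under the continuous map $(a,b,x)\mapsto (a.x,b)$ (continuity of $(a,x)\mapsto a.x$ being Proposition \ref{P1}), and that diagonal is closed because $Z^1_{\text{cts}}(G,A)_{\text{po}}$ is Hausdorff by Lemma \ref{L7}; hence $\widetilde R$ is closed. Since $A$ is compact, the projection $Z^1_{\text{cts}}(G,A)_{\text{po}}\times Z^1_{\text{cts}}(G,A)_{\text{po}}\times A \to Z^1_{\text{cts}}(G,A)_{\text{po}}\times Z^1_{\text{cts}}(G,A)_{\text{po}}$ is a closed map, so $R$ is closed, and i) follows.

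For ii), one direction is free: if $Z^1_{\text{cts}}(G,A)_{\text{po}}$ is compact then $H^1_{\text{cts}}(G,A)_{\text{po}}=\pi_A(Z^1_{\text{cts}}(G,A)_{\text{po}})$ is compact as the continuous image of a compact space, and this also gives the ``in particular'' clause once we recall that Lemma \ref{L7} makes $Z^1_{\text{cts}}(G,A)_{\text{po}}$ compact when $G$ is compact Hausdorff and $A$ is evenly continuous with respect to $G$. For the converse I would show that $\pi_A$ is a perfect map and then conclude that the preimage of the compact space $H^1_{\text{cts}}(G,A)_{\text{po}}$ is compact. Concretely: the saturation $\pi_A^{-1}(\pi_A(C))$ of a subset $C$ equals $cb(C\times A)$ (again using that $A$ is a group), and a tube-lemma argument shows this saturation is closed whenever $C$ is: if $b\notin cb(C\times A)$ then the compact slice $\{b\}\times A$ maps under $cb$ into the open set $Z^1_{\text{cts}}(G,A)_{\text{po}}\setminus C$, so finitely many product neighbourhoods covering $\{b\}\times A$ yield an open neighbourhood of $b$ disjoint from $cb(C\times A)$. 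Thus $\pi_A$ is a continuous closed surjection whose fibres, the orbits, are compact by Corollary \ref{C3}; a second tube-lemma argument (cover $Z^1_{\text{cts}}(G,A)_{\text{po}}$ by opens, take a finite subcover over each fibre, use closedness of $\pi_A$ to fatten it to a saturated open neighbourhood of that fibre, then use compactness of $H^1_{\text{cts}}(G,A)_{\text{po}}$) shows $Z^1_{\text{cts}}(G,A)_{\text{po}}$ is compact.

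I expect the only genuine obstacle to be the closedness of the saturation $cb(C\times A)$, equivalently the statement that $\pi_A$ is a closed (hence perfect) map; this is the one place where compactness of $A$ is really exploited, and once it is in hand both parts follow by soft point-set arguments. A secondary point to be careful about is that the Hausdorffness criterion used in i) does require $\pi_A$ to be open, which is why that fact is recorded after Definition \ref{D6}.
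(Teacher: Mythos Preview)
Your argument is correct. Both parts are sound: in i) you correctly use that $\pi_A$ is open to reduce Hausdorffness to closedness of the orbit relation, and your compactness-of-$A$/tube-lemma argument for that closedness is fine; in ii) your perfectness argument for $\pi_A$ (closed surjection with compact fibres) is valid, and it does yield that compactness passes both ways along $\pi_A$.

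The paper, however, proceeds differently and much more briefly: it simply observes that $A$ is a compact Hausdorff group acting continuously on the Hausdorff space $Z^{1}_{\text{cts}}(G,A)_{\text{po}}$ (by Proposition~\ref{P1} and Lemma~\ref{L3}), and then invokes a general theorem from Bredon's \emph{Introduction to Compact Transformation Groups} (Theorem~3.1, parts (1) and (4)) which asserts exactly that for such an action the orbit space is Hausdorff and that the total space is compact iff the orbit space is. What you have written is, in effect, a self-contained proof of the relevant pieces of Bredon's theorem in this setting: more elementary and independent of an external reference, at the cost of being considerably longer. The paper's approach buys brevity and places the result in a standard framework; yours buys transparency about exactly where compactness of $A$ enters (the closed-projection step) and avoids relying on a reference the reader may not have at hand.
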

	\begin{proof}
		By Proposition \ref{P1}, the cohomology map $ cb : Z_{\text{cts}}^{1}(G,A)_{\text{po}}\times A \longrightarrow Z_{\text{cts}}^{1}(G,A)_{\text{po}} $ is continuous; by Lemma \ref{L3}, $ Z_{\text{cts}}^{1}(G,A)_{\text{po}} $ is a Hausdorff space. Now $ A $ is a compact and Hausdorff topological group which acts continuously on a Hausdorff space $ Z_{\text{cts}}^{1}(G,A)_{\text{po}} $. Therefore $ Z_{\text{cts}}^{1}(G,A)_{\text{po}} $ is an $ A $-space with $ A $ a compact space, which matches the definition of \cite[p.~32, 1]{MR0413144}. Now i) follows from  \cite[p.~38, Theorem 3.1, 1)]{MR0413144}, and ii) follows from \cite[p.~38, Theorem 3.1, 4)]{MR0413144}; in particular, if $ G $ is compact and Hausdorff space, and $ A $ is evenly continuous space with respect to $ G $, by Lemma \ref{L7}, $ Z_{\text{cts}}^{1}(G,A)_{\text{po}} $ is a compact space, which implies that the space $ H_{\text{cts}}^{1}(G,A)_{\text{po}} $ is a compact space cf. \cite[p.~119, Theorem 17.1]{MR2048350}.
	\end{proof} 
	\section{The First Main Theorem}\label{S3} 
	In this section we shall work with compact $ G $-groups. Recall $ R $ is a fixed directed poset and we assume $ \{A_{r}, \varphi_{rs}, R\} $ is an inverse system of compact topological $ G $-groups, where the transition maps are $ G $-group homomorphism. Then the first main result of this article is the following.
	\begin{theorem}\label{T1}
		Let $ A $ be a compact $ G $-group. Assume $ A $ has a presentation $ A= \varprojlim_{\stackrel{ \ }{r\in R}}A_{r} $. Denote by $ \varphi_{r}: A\longrightarrow A_{r} $ the natural projections. Then there exists a continuous bijection \[ \Theta : H_{\text{cts}}^{1}(G,A)_{\text{po}} \longrightarrow \varprojlim_{\stackrel{ \ }{r\in R}} H_{\text{cts}}^{1}(G,A_{r})_{\text{po}} \ \ \text{given by} \ \ [a]\longmapsto ([\varphi_{r\ast}(a)]). \]
		In particular, if $ G $ is a compact and Hausdorff space, and $ A $ is evenly continuous with respect to $ G $, then $ \Theta $ is a homeomorphism.     	
	\end{theorem}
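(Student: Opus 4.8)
The plan is to build $\Theta$ from the universal property of the inverse limit, to reduce its bijectivity to the interaction of the projections $\varphi_{r\ast}$ with the orbits of the $cb$-actions, and to finish the bijectivity using the fact that an inverse limit of nonempty compact Hausdorff spaces indexed by a directed poset is nonempty; the homeomorphism statement is then a compact-to-Hausdorff argument. First I would check that $\Theta$ is well-defined and continuous: by Remark \ref{R4} each projection $\varphi_r : A\to A_r$ induces a continuous map $\overline{\varphi_r}_{\ast} : H^1_{\text{cts}}(G,A)_{\text{po}}\to H^1_{\text{cts}}(G,A_r)_{\text{po}}$, and since $\varphi_{rt}\varphi_r=\varphi_t$ these maps are compatible with the $\overline{\varphi_{rt}}_{\ast}$, so the universal property of the inverse limit produces the unique continuous $\Theta$ with $[a]\mapsto([\varphi_{r\ast}(a)])$. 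I would also record, by restricting the map $\theta$ of Lemma \ref{L4} (with $B=G$) to cocycles, that $\theta$ gives a bijection $Z^1_{\text{cts}}(G,A)_{\text{po}}\to\varprojlim_{r\in R}Z^1_{\text{cts}}(G,A_r)_{\text{po}}$: a compatible family of continuous cocycles lifts uniquely to a continuous map $a:G\to A$, which satisfies the cocycle identity because $A\hookrightarrow\prod_r A_r$ detects it pointwise. This reduces the problem to the fibres of the maps $\pi_{A_r}$.

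For surjectivity of $\Theta$, start with a compatible family $([b_r])\in\varprojlim_r H^1_{\text{cts}}(G,A_r)_{\text{po}}$, so $[\varphi_{rt\ast}(b_r)]=[b_t]$ for $r\geq t$; equivalently, writing $\text{Orb}_{A_r}(b_r)=\pi_{A_r}^{-1}([b_r])$, Remark \ref{R3} shows $\varphi_{rt\ast}(\text{Orb}_{A_r}(b_r))\subseteq\text{Orb}_{A_t}(b_t)$, so $\{\text{Orb}_{A_r}(b_r),\varphi_{rt\ast},R\}$ is an inverse system. By Corollary \ref{C3} each $\text{Orb}_{A_r}(b_r)$ is nonempty, compact and Hausdorff, so the inverse limit of this system is nonempty (cf. \cite{MR2599132}); a point of it is a compatible family $(a_r)$ with $[a_r]=[b_r]$, hence an element of $\varprojlim_r Z^1_{\text{cts}}(G,A_r)_{\text{po}}$, and $\theta^{-1}$ produces $a\in Z^1_{\text{cts}}(G,A)_{\text{po}}$ with $\Theta([a])=([b_r])$.

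For injectivity, suppose $[\varphi_{r\ast}(a)]=[\varphi_{r\ast}(a')]$ for all $r$ and put $T_r=\{x\in A_r:\varphi_{r\ast}(a)=\varphi_{r\ast}(a').x\}$. Each $T_r$ is nonempty by hypothesis and closed in the compact Hausdorff group $A_r$, being the preimage of the closed point $\varphi_{r\ast}(a)$ under the continuous map $x\mapsto cb(\varphi_{r\ast}(a'),x)$ into the Hausdorff space $Z^1_{\text{cts}}(G,A_r)_{\text{po}}$; Remark \ref{R3} gives $\varphi_{rt}(T_r)\subseteq T_t$, so $\{T_r,\varphi_{rt},R\}$ is an inverse system of nonempty compact Hausdorff spaces with nonempty limit. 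A point of that limit is an $x\in A=\varprojlim_r A_r$ with $\varphi_{r\ast}(a)=\varphi_{r\ast}(a'.x)$ for all $r$, whence $a=a'.x$ and $[a]=[a']$; thus $\Theta$ is a continuous bijection.

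Finally, under the hypotheses of the last assertion, Lemma \ref{L7} makes $Z^1_{\text{cts}}(G,A)_{\text{po}}$ compact, hence $H^1_{\text{cts}}(G,A)_{\text{po}}$ is compact by Lemma \ref{L9}; each $H^1_{\text{cts}}(G,A_r)_{\text{po}}$ is Hausdorff by Lemma \ref{L9}, so the target is a subspace of a product of Hausdorff spaces and therefore Hausdorff, and a continuous bijection from a compact space to a Hausdorff space is a homeomorphism. The main obstacle I anticipate is the bijectivity: everything turns on recognising the fibres of $\pi_{A_r}$ and the transporter sets $T_r$ as compact Hausdorff spaces -- which ultimately uses the compactness of $A$, as in Corollary \ref{C3} and Corollary \ref{C1} -- and on verifying that the bonding maps respect these sets, i.e.\ on the equivariance of $\varphi_{r\ast}$ with the $cb$-action recorded in Remark \ref{R3}.
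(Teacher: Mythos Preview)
Your proposal is correct and follows essentially the same route as the paper: the paper also constructs $\Theta$ via the universal property (Lemma \ref{L10}), proves injectivity by showing the transporter sets $S_r=\{x\in A_r:\varphi_r a=\varphi_r b.x\}$ form an inverse system of nonempty compact Hausdorff spaces (Lemma \ref{L11}, using Corollary \ref{C1}), proves surjectivity by the same argument applied to the orbits $\text{Orb}_{A_r}(a_r)$ (Lemma \ref{L12}, using Corollary \ref{C3}), and concludes the homeomorphism statement by the compact-to-Hausdorff trick via Lemma \ref{L9}. The only cosmetic differences are the order of the injectivity and surjectivity arguments and that you verify closedness of $T_r$ directly from continuity of $cb$ rather than by recognising it as a coset of a stabiliser.
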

	We prove Theorem \ref{T1}, using 3 lemmas, by showing there exists a well-defined natural continuous map $ \Theta $, induced by $ \varphi_{r} $ for $ r \in R $, such that $ \Theta $ is a continuous bijection.  
	\begin{lemma}\label{L10}
		Let $ A $ be a compact $ G $-group satisfying the assumption of Theorem \ref{T1}. Then there exists a well-defined continuous map \[ \Theta : H^{1}_{\text{cts}}(G,A)_{\text{po}} \longrightarrow \varprojlim_{\stackrel{ \ }{r\in R}} H^{1}_{\text{cts}}(G,A_{r})_{\text{po}}  \ \ \text{given by} \ \ [a]\longmapsto ([\varphi_{r\ast}(a)]) \] and we have a commutative diagram
		\[ 
		\begin{tikzcd}[row sep=1.5em , column sep=2.0em]
		Z_{\text{cts}}^{1}(G,A)_{\text{po}} \arrow[two heads]{d}[]{\pi_{A}}\arrow{r}[]{\theta} & \varprojlim_{\stackrel{ \ }{r\in R}} Z_{\text{cts}}^{1}(G,A_{r})_{\text{po}} \arrow{d}[]{\varprojlim_{\stackrel{ \ }{r\in R}}\pi_{A_{r}}} \\ H^{1}_{\text{cts}}(G,A)_{\text{po}} \arrow{r}[]{\Theta} & \varprojlim_{\stackrel{ \ }{r\in R}} H^{1}_{\text{cts}}(G,A_{r})_{\text{po}}.
		\end{tikzcd} \]
	\end{lemma}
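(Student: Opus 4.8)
The plan is to construct $\Theta$ by descending, through the quotient map $\pi_A$, a continuous map already available at the level of cocycles. First I would observe, using Remark \ref{R1}, that each $G$-group homomorphism $\varphi_{rt}\colon A_r\to A_t$ (for $r\ge t$) and each projection $\varphi_r\colon A\to A_r$ induces a continuous map on cocycle spaces, and that functoriality of $f\mapsto\varphi f$ (Lemma \ref{L1}) yields the compatibilities $\varphi_{ts\ast}\circ\varphi_{rt\ast}=\varphi_{rs\ast}$ and $\varphi_{rt\ast}\circ\varphi_{r\ast}=\varphi_{t\ast}$. Hence $\{Z^1_{\mathrm{cts}}(G,A_r)_{\mathrm{po}},\varphi_{rt\ast},R\}$ is an inverse system, and the construction of Lemma \ref{L4} restricts to give a continuous map
\[ \theta\colon Z^1_{\mathrm{cts}}(G,A)_{\mathrm{po}}\longrightarrow \varprojlim_{\stackrel{ \ }{r\in R}} Z^1_{\mathrm{cts}}(G,A_r)_{\mathrm{po}},\qquad a\longmapsto(\varphi_{r\ast}(a)). \]
Composing with $\varprojlim_{r}\pi_{A_r}$ — continuous because each $\pi_{A_r}$ is continuous and the inverse limit carries the subspace topology from the product — produces a continuous map $\Phi:=(\varprojlim_{r}\pi_{A_r})\circ\theta$ sending $a\mapsto([\varphi_{r\ast}(a)])$; that this tuple genuinely lies in $\varprojlim_{r} H^1_{\mathrm{cts}}(G,A_r)_{\mathrm{po}}$ is immediate from the commutative square of Remark \ref{R4} applied to each transition map $\varphi_{rt}$.

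Next I would show that $\Phi$ is constant on the fibres of $\pi_A$, i.e. on $A$-orbits. Suppose $a=cb_A(b,x)=b.x$ for some $x\in A$. By Remark \ref{R3} one has $\varphi_{r\ast}(a)=\varphi_{r\ast}(cb_A(b,x))=cb_{A_r}(\varphi_{r\ast}(b),\varphi_r(x))=\varphi_{r\ast}(b).\varphi_r(x)$ for every $r\in R$, so $\varphi_{r\ast}(a)$ and $\varphi_{r\ast}(b)$ are cohomologous in $Z^1_{\mathrm{cts}}(G,A_r)_{\mathrm{po}}$, whence $[\varphi_{r\ast}(a)]=[\varphi_{r\ast}(b)]$ for all $r$ and therefore $\Phi(a)=\Phi(b)$. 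Since $\pi_A$ is a quotient map by Definition \ref{D6} (indeed continuous, surjective and open, cf. \cite[p.~37]{MR0413144}), the universal property of quotient maps yields a unique continuous map
\[ \Theta\colon H^1_{\mathrm{cts}}(G,A)_{\mathrm{po}}\longrightarrow \varprojlim_{\stackrel{ \ }{r\in R}} H^1_{\mathrm{cts}}(G,A_r)_{\mathrm{po}} \]
with $\Theta\circ\pi_A=\Phi=(\varprojlim_{r}\pi_{A_r})\circ\theta$, which is exactly the asserted commutative square; evaluating on a representative gives $\Theta([a])=([\varphi_{r\ast}(a)])$.

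All the manipulations above are routine once Remarks \ref{R1}, \ref{R3}, and \ref{R4} are available; the only step requiring a little care is the passage through the quotient, where one must invoke that $\pi_A$ is a genuine quotient map (not merely a continuous surjection) so that continuity of $\Theta$ follows from continuity of $\Phi$. I do not anticipate a serious obstacle here — the substantive content, namely the injectivity and surjectivity of $\Theta$ and the homeomorphism statement under even continuity, is what will be handled in the remaining lemmas leading to the proof of Theorem \ref{T1}.
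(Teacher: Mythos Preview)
Your proposal is correct and follows essentially the same route as the paper: both build the inverse system of cocycle and cohomology spaces via functoriality (Remarks \ref{R1}, \ref{R3}, \ref{R4} and Lemma \ref{L4}) and then descend to $\Theta$. The only organisational difference is that you invoke the universal property of the quotient map $\pi_A$ to obtain continuity of $\Theta$ directly from continuity of $\Phi$, whereas the paper phrases the same step as using openness of $\pi_A$ together with the explicit preimage calculation of Remark \ref{R4}; these are the same argument in different clothing.
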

	\begin{proof}
		Since $ \varphi_{rt}: A_{r}\longrightarrow A_{t} $ for $ r\geq t $ are continuous $ G $-group homomorphisms, by Remark \ref{R4}, they induce a commutative diagram of continuous maps
		\[ 
		\begin{tikzcd}[row sep=1.5em , column sep=2.0em]
		Z_{\text{cts}}^{1}(G,A_{r})_{\text{po}} \arrow[two heads]{d}[]{\pi_{A_{r}}}\arrow{r}[]{\varphi_{rt\ast}} & Z_{\text{cts}}^{1}(G,A_{t})_{\text{po}} \arrow[two heads]{d}[]{\pi_{A_{t}}} \\ H^{1}_{\text{cts}}(G,A_{r})_{\text{po}} \arrow{r}[]{\overline{\varphi}_{rt\ast}} & H^{1}_{\text{cts}}(G,A_{t})_{\text{po}}.
		\end{tikzcd} \] 
		Note $ \varphi_{rt} $ are transition maps, and so $ \{H^{1}_{\text{cts}}(G,A_{r})_{\text{po}},\overline{\varphi}_{rt\ast}, R \} $ is inverse system of topological pointed sets. 
		
		Now the continuous maps $ \varphi_{r}: A\longrightarrow A_{r} $ for $ r \in R $ are $ G $-group homomorphisms, so by Remark \ref{R4} they induces a commutative diagram of continuous maps
		\[ 
		\begin{tikzcd}[row sep=1.5em , column sep=2.0em]
		Z_{\text{cts}}^{1}(G,A)_{\text{po}} \arrow[two heads]{d}[]{\pi_{A}}\arrow{r}[]{\varphi_{r\ast}} & Z_{\text{cts}}^{1}(G,A_{r})_{\text{po}} \arrow[two heads]{d}[]{\pi_{A_{r}}} \\ H^{1}_{\text{cts}}(G,A)_{\text{po}} \arrow{r}[]{\overline{\varphi}_{r\ast}} & H^{1}_{\text{cts}}(G,A_{r})_{\text{po}},
		\end{tikzcd} \]
		compatible with transition maps of $ \{Z^{1}_{\text{cts}}(G,A_{r})_{\text{po}},\varphi_{rt\ast}, R \} $ and $ \{H^{1}_{\text{cts}}(G,A_{r})_{\text{po}},\overline{\varphi}_{rt\ast}, R \} $. Taking the inverse limit of the above diagram we obtain a unique commutative diagram 
		\[ 
		\begin{tikzcd}[row sep=1.5em , column sep=2.0em]
		Z_{\text{cts}}^{1}(G,A)_{\text{po}} \arrow[two heads]{d}[]{\pi_{A}}\arrow{r}[]{\theta} & \varprojlim_{\stackrel{ \ }{r\in R}} Z_{\text{cts}}^{1}(G,A_{r})_{\text{po}} \arrow{d}[]{\varprojlim_{\stackrel{ \ }{r\in R}}\pi_{A_{r}}} \\ H^{1}_{\text{cts}}(G,A)_{\text{po}} \arrow{r}[]{\Theta} & \varprojlim_{\stackrel{ \ }{r\in R}} H^{1}_{\text{cts}}(G,A_{r})_{\text{po}}.
		\end{tikzcd} \] 
		This shows that the map $ \Theta $ is well-defined and is given by $ [a]\longmapsto ([\varphi_{r\ast}(a)]) $ for $ [a] \in H^{1}_{\text{cts}}(G,A)_{\text{po}} $. Now it follows from Lemma \ref{L4} that the map $ \theta $ is continuous (homeomorphism in fact since $ A $ is compact), and $ \varprojlim_{\stackrel{ \ }{r\in R}}\pi_{A_{r}} $ is a continuous map (being inverse limit of continuous maps), also since $ \pi_{A} $ is an open map, by preforming a similar calculation to what is done is Remark \ref{R4}, we find that $ \Theta $ is a continuous map. 
	\end{proof} 
	\begin{lemma}\label{L11}
		Let $ A $ be a compact $ G $-group satisfying the assumption of Theorem \ref{T1}. Then the continuous map \[ \Theta : H^{1}_{\text{cts}}(G,A)_{\text{po}} \longrightarrow \varprojlim_{\stackrel{ \ }{r\in R}} H^{1}_{\text{cts}}(G,A_{r})_{\text{po}}, \] 
		defined in Lemma \ref{L10}, is injective.
	\end{lemma}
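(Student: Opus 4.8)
The plan is: given $ a,b \in Z^{1}_{\text{cts}}(G,A)_{\text{po}} $ with $ \Theta([a])=\Theta([b]) $, manufacture a single element $ x \in A $ with $ b.x=a $, so that $ [a]=[b] $. Unravelling the hypothesis, for each $ r\in R $ there is some $ x_{r}\in A_{r} $ with $ \varphi_{r\ast}(b).x_{r}=\varphi_{r\ast}(a) $; equivalently, the transporter set
\[ T_{r} \stackrel{\mathrm{def}}{=} \{ x \in A_{r} : \varphi_{r\ast}(b).x = \varphi_{r\ast}(a) \} \]
is nonempty. I would then show that the sets $ T_{r} $ form an inverse system of nonempty compact Hausdorff spaces, whose inverse limit sits inside $ \varprojlim_{r\in R}A_{r}=A $ and yields the desired $ x $.

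First, each $ T_{r}\subseteq A_{r} $ is closed, hence compact: by Proposition \ref{P1} the map $ A_{r}\longrightarrow Z^{1}_{\text{cts}}(G,A_{r})_{\text{po}} $, $ x\longmapsto \varphi_{r\ast}(b).x $ (a restriction of $ cb_{A_{r}} $ to $ \{\varphi_{r\ast}(b)\}\times A_{r} $) is continuous, $ Z^{1}_{\text{cts}}(G,A_{r})_{\text{po}} $ is Hausdorff by Lemma \ref{L3}, so $ \{\varphi_{r\ast}(a)\} $ is closed and $ T_{r} $ is its preimage --- exactly the argument of Corollary \ref{C1}. Since $ A_{r} $ is compact and Hausdorff, so is $ T_{r} $, and it is nonempty by hypothesis. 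Next, for $ r\geq t $, applying $ \varphi_{rt\ast} $ to the identity $ \varphi_{r\ast}(b).x=\varphi_{r\ast}(a) $ and using the compatibility $ \varphi_{rt\ast}\,cb_{A_{r}}(c,x)=cb_{A_{t}}(\varphi_{rt\ast}(c),\varphi_{rt}(x)) $ recorded in Remark \ref{R3}, together with $ \varphi_{rt\ast}\varphi_{r\ast}=\varphi_{t\ast} $, gives $ \varphi_{t\ast}(b).\varphi_{rt}(x)=\varphi_{t\ast}(a) $; hence $ \varphi_{rt}(T_{r})\subseteq T_{t} $, so $ \{T_{r},\varphi_{rt}|_{T_{r}},R\} $ is an inverse system of the required type.

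An inverse limit of a nonempty inverse system of nonempty compact Hausdorff spaces over a directed poset is nonempty, so $ \varprojlim_{r\in R}T_{r}\neq\emptyset $ cf. \cite{MR2599132}. Choosing $ x=(x_{r})\in \varprojlim_{r\in R}T_{r} $ and viewing it inside $ \varprojlim_{r\in R}A_{r}=A $, we have $ \varphi_{r}(x)=x_{r} $, whence $ \varphi_{r\ast}(b.x)=cb_{A_{r}}(\varphi_{r\ast}(b),\varphi_{r}(x))=\varphi_{r\ast}(b).x_{r}=\varphi_{r\ast}(a) $ for all $ r\in R $, again by Remark \ref{R3}. Since the map $ \theta $ of Lemma \ref{L10} is a bijection, this forces $ b.x=a $ in $ Z^{1}_{\text{cts}}(G,A)_{\text{po}} $, and therefore $ [a]=[b.x]=[b] $ in $ H^{1}_{\text{cts}}(G,A)_{\text{po}} $, proving injectivity of $ \Theta $.

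The step I expect to be the crux is the nonemptiness of $ \varprojlim_{r\in R}T_{r} $: this is precisely where compactness of the $ A_{r} $ is indispensable (the transporters need not be subgroups, only closed subsets, but compactness still applies), and one must be careful to check that each $ T_{r} $ is genuinely nonempty and that the restricted transition maps land in the smaller sets --- everything else is routine bookkeeping with the functoriality of Remarks \ref{R1}--\ref{R4}, Lemma \ref{L4}, and Proposition \ref{P1}.
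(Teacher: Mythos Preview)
Your proposal is correct and follows essentially the same approach as the paper's proof: both define the transporter sets $T_{r}$ (the paper calls them $S_{r}$), check they form an inverse system of nonempty compact Hausdorff spaces, invoke the nonemptiness of the inverse limit, and finish via the injectivity of $\theta$ from Lemma~\ref{L4}. The only cosmetic difference is that the paper identifies $S_{r}$ as a coset $\text{Stab}_{A_{r}}(\varphi_{r}b)\,x_{r}$ and applies Corollary~\ref{C1} to the stabiliser, whereas you argue directly that $T_{r}$ is the preimage of a closed point under a continuous map; both arguments are equivalent and equally short.
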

	\begin{proof}
		Let $ [a], [b] \in H^{1}_{\text{cts}}(G,A)_{\text{po}} $, and suppose \[ \Theta([a])=([\varphi_{r}a])=\Theta([b])=([\varphi_{r}b]). \] Then for all $ r \in R $ there exists $ x_{r} \in A_{r} $ such that $ \varphi_{r}a=\varphi_{r}b.x_{r} $. Therefore, the set $ S_{r}=\{\widetilde{x} \in A_{r}: \varphi_{r}a=\varphi_{r}b.\widetilde{x} \} $ is nonempty for all $ r \in R $. Now $ S_{r} $ is equal to the coset $ \text{Stab}_{A_{r}}(\varphi_{r}b)x_{r} $, where $ \text{Stab}_{A_{r}}(\varphi_{r}b) $ is the stabiliser of $ \varphi_{r}b $ for the action of $ A_{r} $ on $ Z_{\text{cts}}^{1}(G,A_{r})_{\text{po}} $. The set $ \text{Stab}_{A_{r}}(\varphi_{r}b) \subseteq A_{r} $ is a closed, compact, and Hausdorff subgroup by Corollary \ref{C1}. Therefore, $ S_{r}=\text{Stab}_{A_{r}}(\varphi_{r}b) x_{r} $ is nonempty, compact, and Hausdorff space for each $ r \in R $. In particular, if $ r\geq t $ and $ \widetilde{x} \in S_{r} $, then \[ \varphi_{t}b .\varphi_{rt}(\widetilde{x})=\varphi_{rt}\varphi_{r}b.\varphi_{rt}(\widetilde{x})=\varphi_{rt}(\varphi_{r}b.\widetilde{x})=\varphi_{rt}\varphi_{r}a=\varphi_{t}a, \]
		so $ \varphi_{rt}(\widetilde{x}) \in S_{t} $. Hence, the set $ \{S_{r}, \varphi_{rt}, R\} $ is an inverse system of nonempty, compact, and Hausdorff spaces. Therefore, $ \varprojlim_{\stackrel{ \ }{r \in R}} S_{r} $ is nonempty, compact, and Hausdorff cf. \cite[p.~4, Proposition 1.1.4]{MR2599132}. Now for $ x \in \varprojlim_{\stackrel{ \ }{r \in R}} S_{r} \hookrightarrow \varprojlim_{\stackrel{ \ }{r \in R}} A_{r} =A $, one has \[ \varphi_{r} b.x=\varphi_{r}b.\varphi_{r}(x)=\varphi_{r}a \] for all $ r \in R $, so $ \theta(b.x)=\theta(a) $, and since by Lemma \ref{L4} the map $ \theta $ is injective, we conclude that $ b.x=a $; therefore $ [a]=[b] $, and this proves that the map $ \Theta $ is injective. 
	\end{proof}
	\begin{lemma}\label{L12}
		Let $ A $ be a compact $ G $-group satisfying the assumption of Theorem \ref{T1}. Then the continuous map \[ \Theta : H^{1}_{\text{cts}}(G,A)_{\text{po}} \longrightarrow \varprojlim_{\stackrel{ \ }{r\in R}} H^{1}_{\text{cts}}(G,A_{r})_{\text{po}}, \] 
		defined in Lemma \ref{L10}, is surjective.
	\end{lemma}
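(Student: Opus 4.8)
\textbf{Proof plan for Lemma \ref{L12}.} The plan is to reduce surjectivity of $\Theta$ to the nonemptiness of an inverse limit of nonempty compact Hausdorff spaces. Fix a compatible family $([b_r])_{r\in R}\in\varprojlim_{\stackrel{ \ }{r\in R}}H^{1}_{\text{cts}}(G,A_{r})_{\text{po}}$; the goal is to produce $[a]\in H^{1}_{\text{cts}}(G,A)_{\text{po}}$ with $\Theta([a])=([b_r])$. First I note that the map $\theta$ of Lemma \ref{L4} restricts to a bijection from $Z^{1}_{\text{cts}}(G,A)_{\text{po}}$ onto $\varprojlim_{\stackrel{ \ }{r\in R}}Z^{1}_{\text{cts}}(G,A_{r})_{\text{po}}$: a family of continuous cocycles compatible with the $\varphi_{rt\ast}$ is in particular a compatible family in the $M_{\text{cts}}(G,A_{r})_{\text{po}}$, hence glues to a unique continuous map $a\colon G\to A$, and $a$ is a cocycle because the $\varphi_{r}$ are $G$-group homomorphisms and the $\varphi_{r}$ separate the points of $A=\varprojlim_{\stackrel{ \ }{r\in R}}A_{r}$. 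Thus it suffices to find a compatible family $(a_{r})_{r\in R}\in\varprojlim_{\stackrel{ \ }{r\in R}}Z^{1}_{\text{cts}}(G,A_{r})_{\text{po}}$ with $a_{r}$ cohomologous to $b_{r}$ for every $r$; the corresponding $a$ then satisfies $\varphi_{r\ast}(a)=a_{r}$, whence $\Theta([a])=([a_{r}])=([b_{r}])$.

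Next, for each $r\in R$ let $\mathrm{Orb}_{A_{r}}(b_{r})\subseteq Z^{1}_{\text{cts}}(G,A_{r})_{\text{po}}$ be the orbit of $b_{r}$ under the action $cb_{A_{r}}$ of $A_{r}$; this set is precisely $\pi_{A_{r}}^{-1}([b_{r}])$, and by Corollary \ref{C3} it is a nonempty, compact, Hausdorff space. I then claim that for $r\geq t$ the continuous map $\varphi_{rt\ast}$ carries $\mathrm{Orb}_{A_{r}}(b_{r})$ into $\mathrm{Orb}_{A_{t}}(b_{t})$: by the compatibility hypothesis $\varphi_{rt\ast}(b_{r})$ is cohomologous to $b_{t}$, so $\varphi_{rt\ast}(b_{r})\in\mathrm{Orb}_{A_{t}}(b_{t})$, and Remark \ref{R3} gives $\varphi_{rt\ast}(b_{r}\cdot x)=\varphi_{rt\ast}(b_{r})\cdot\varphi_{rt}(x)$, which lies in the $A_{t}$-orbit of $b_{t}$ because orbits are stable under the $A_{t}$-action. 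Since the $\varphi_{rt\ast}$ compose functorially ($\varphi_{ts\ast}\circ\varphi_{rt\ast}=\varphi_{rs\ast}$), one obtains an inverse system $\{\mathrm{Orb}_{A_{r}}(b_{r}),\varphi_{rt\ast},R\}$ of nonempty compact Hausdorff spaces.

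Finally, by the standard compactness argument cf. \cite[p.~4, Proposition 1.1.4]{MR2599132}, the inverse limit $\varprojlim_{\stackrel{ \ }{r\in R}}\mathrm{Orb}_{A_{r}}(b_{r})$ is nonempty. Choosing any element $(a_{r})_{r\in R}$ in it yields a compatible family $(a_{r})\in\varprojlim_{\stackrel{ \ }{r\in R}}Z^{1}_{\text{cts}}(G,A_{r})_{\text{po}}$ with each $a_{r}$ cohomologous to $b_{r}$; pulling it back through $\theta$ produces the required $[a]$, and $\Theta$ is surjective.

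I expect the main obstacle to be the middle step: checking that the connecting maps $\varphi_{rt\ast}$ genuinely descend to maps $\mathrm{Orb}_{A_{r}}(b_{r})\to\mathrm{Orb}_{A_{t}}(b_{t})$, so that $\{\mathrm{Orb}_{A_{r}}(b_{r})\}$ is an inverse system. This is where both the cohomology hypothesis on the family $([b_{r}])$ and the $G$-equivariance of the $\varphi_{rt}$ (through Remark \ref{R3}) are essential; once this is granted, nonemptiness of the inverse limit is immediate from the compactness and Hausdorffness of the orbits supplied by Corollary \ref{C3}, and the rest is the bookkeeping of the first paragraph via Lemma \ref{L4}.
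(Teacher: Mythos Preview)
Your proposal is correct and follows essentially the same route as the paper: form the inverse system of orbits $\mathrm{Orb}_{A_{r}}(b_{r})=\pi_{A_{r}}^{-1}([b_{r}])$, use Corollary \ref{C3} to see each is nonempty compact Hausdorff, apply \cite[p.~4, Proposition 1.1.4]{MR2599132} to get a point in the inverse limit, and pull back via $\theta$. Your write-up is in fact a bit more careful than the paper's, since you spell out via Remark \ref{R3} and the compatibility of $([b_{r}])$ why $\varphi_{rt\ast}$ carries $\mathrm{Orb}_{A_{r}}(b_{r})$ into $\mathrm{Orb}_{A_{t}}(b_{t})$---a point the paper simply asserts.
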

	\begin{proof}
		Recall we have surjective continuous maps \[\pi_{A_{r}}: Z^{1}_{\text{cts}}(G,A_{r})_{\text{po}} \longrightarrow H^{1}_{\text{cts}}(G,A_{r})_{\text{po}}  \] 
		for each $ r \in R $, which are compatible with the transition maps of $ \{Z^{1}_{\text{cts}}(G,A_{r})_{\text{po}},\varphi_{rt\ast}, R \} $ and $ \{H^{1}_{\text{cts}}(G,A_{r})_{\text{po}},\overline{\varphi}_{rt\ast}, R \} $. Let $ ([a_{r}]) \in \varprojlim_{\stackrel{ \ }{r\in R}} H^{1}_{\text{cts}}(G,A_{r})_{\text{po}} $. Then since $ A_{r} $ is compact and Hausdorff for each $ r \in R $, the set $ \pi_{A_{r}}^{-1}([a_{r}])=\text{Orb}_{A_{r}}(a_{r}) $ is a nonempty, compact, and Hausdorff space for each $ r \in R $ by Corollary \ref{C3}. In particular the set $ \{\text{Orb}_{A_{r}}(a_{r}), \varphi_{rt\ast}, R\} $ is an inverse system of nonempty, compact, and Hausdorff spaces. Therefore, the set $ \varprojlim_{\stackrel{ \ }{r \in R}} \text{Orb}_{A_{r}}(a_{r}) $ is nonempty, compact, and Hausdorff cf. \cite[p.~4, Proposition 1.1.4]{MR2599132}. Now for \[ a \in \varprojlim_{\stackrel{ \ }{r \in R}} \text{Orb}_{A_{r}}(a_{r}) \hookrightarrow \varprojlim_{\stackrel{ \ }{r \in R}} Z^{1}_{\text{cts}}(G,A_{r})_{\text{po}} \] we have $ \pi_{A}\theta^{-1}(a) \in H^{1}_{\text{cts}}(G,A)_{\text{po}} $ and $ \Theta\pi_{A}\theta^{-1}(a)=([a_{r}]) $; this proves that the map $ \Theta $ is surjective. 
	\end{proof}
	
	Now we provide the proof of the Theorem \ref{T1}. We recall the statement of Theorem \ref{T1} again.
	\begin{theorem}[]
		Let $ A $ be a compact $ G $-group. Assume $ A $ has a presentation $ A= \varprojlim_{\stackrel{ \ }{r\in R}}A_{r} $. Denote by $ \varphi_{r}: A\longrightarrow A_{r} $ the natural projections. Then there exists a continuous bijection \[ \Theta : H_{\text{cts}}^{1}(G,A)_{\text{po}} \longrightarrow \varprojlim_{\stackrel{ \ }{r\in R}} H_{\text{cts}}^{1}(G,A_{r})_{\text{po}} \ \ \text{given by} \ \ [a]\longmapsto ([\varphi_{r\ast}(a)]). \]
		In particular, if $ G $ is a compact and Hausdorff space, and $ A $ is evenly continuous with respect to $ G $, then $ \Theta $ is a homeomorphism.   
	\end{theorem}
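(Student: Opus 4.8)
The plan is to assemble the three preceding lemmas and then add a short compactness argument for the topological refinement. Lemma \ref{L10} already does most of the work: it produces the map $\Theta$, shows it is well-defined by exhibiting the commutative square relating it to the homeomorphism $\theta$ of Lemma \ref{L4} and the quotient maps $\pi_A$ and $\varprojlim_{r\in R}\pi_{A_r}$, and shows it is continuous (using that $\pi_A$ is open, as in the calculation of Remark \ref{R4}). What remains for the first assertion is bijectivity, and this is precisely Lemma \ref{L11} (injectivity) together with Lemma \ref{L12} (surjectivity). Combining these statements gives that $\Theta:[a]\longmapsto([\varphi_{r\ast}(a)])$ is a natural continuous bijection, which is the first part of the theorem.

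For the ``in particular'' clause, I would argue that under the additional hypotheses $\Theta$ is a continuous bijection from a compact space onto a Hausdorff space, hence a homeomorphism by \cite[p.~123, 17.14 Theorem]{MR2048350}. For the source: since $G$ is compact and Hausdorff and $A$ is a compact $G$-group evenly continuous with respect to $G$, Lemma \ref{L7} gives that $Z^{1}_{\text{cts}}(G,A)_{\text{po}}$ is compact, and then Lemma \ref{L9} ii) (equivalently, $H^{1}_{\text{cts}}(G,A)_{\text{po}}=\pi_A(Z^{1}_{\text{cts}}(G,A)_{\text{po}})$ is a continuous image of a compact space) gives that $H^{1}_{\text{cts}}(G,A)_{\text{po}}$ is compact. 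For the target: each $A_r$ is a compact $G$-group, so by Lemma \ref{L9} i) each $H^{1}_{\text{cts}}(G,A_r)_{\text{po}}$ is Hausdorff; hence $\prod_{r\in R}H^{1}_{\text{cts}}(G,A_r)_{\text{po}}$ is Hausdorff, and $\varprojlim_{r\in R}H^{1}_{\text{cts}}(G,A_r)_{\text{po}}$, being a subspace of this product, is Hausdorff as well. Invoking the compact-to-Hausdorff principle then upgrades $\Theta$ to a homeomorphism and finishes the proof.

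The genuinely substantive content is already isolated in Lemmas \ref{L11} and \ref{L12}, where the subtlety is to promote the pointwise data --- the elements $x_r\in A_r$ with $\varphi_r a=\varphi_r b.x_r$ in the injectivity argument, and the orbit representatives in the surjectivity argument --- to a single coherent element of the inverse limit; this succeeds because the relevant fibres $S_r=\mathrm{Stab}_{A_r}(\varphi_r b)\,x_r$ (respectively $\mathrm{Orb}_{A_r}(a_r)$) are nonempty, compact, and Hausdorff by Corollary \ref{C1} (respectively Corollary \ref{C3}), the transition maps $\varphi_{rt}$ carry $S_r$ into $S_t$, so $\varprojlim_{r\in R}S_r\neq\emptyset$ by \cite[p.~4, Proposition 1.1.4]{MR2599132}, and injectivity of $\theta$ (Lemma \ref{L4}) transports the resulting element back to $A$. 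Given those lemmas, assembling Theorem \ref{T1} is purely formal; the only remaining care is bookkeeping of where compactness of $A$ (so that $\theta$ is a homeomorphism, not merely a continuous bijection) and even continuity of $A$ with respect to $G$ (so that $H^{1}_{\text{cts}}(G,A)_{\text{po}}$ is compact via the Ascoli theorem invoked in Lemma \ref{L5}) are actually used.
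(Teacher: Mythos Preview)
Your proposal is correct and follows essentially the same route as the paper: assemble Lemmas \ref{L10}, \ref{L11}, and \ref{L12} to get that $\Theta$ is a well-defined continuous bijection, then use Lemma \ref{L9} i) to see that the target is Hausdorff and Lemma \ref{L9} ii) (via Lemma \ref{L7}) to see that the source is compact under the additional hypotheses, and conclude by the compact-to-Hausdorff principle. Your supplementary paragraph summarising the content of Lemmas \ref{L11} and \ref{L12} is accurate but not needed for the proof of the theorem itself.
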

	\begin{proof}
		By Lemma \ref{L10}, there exists a commutative diagram of continuous maps
		\[ 
		\begin{tikzcd}[row sep=1.5em , column sep=2.0em]
		Z_{cts}^{1}(G,A)_{\text{co}} \arrow[two heads]{d}[]{\pi_{A}}\arrow{r}[]{\theta} & \varprojlim_{\stackrel{ \ }{r\in R}} Z_{cts}^{1}(G,A_{r})_{\text{co}} \arrow{d}[]{\varprojlim_{\stackrel{ \ }{r\in R}}\pi_{A_{r}}} \\ H^{1}_{cts}(G,A) \arrow{r}[]{\Theta} & \varprojlim_{\stackrel{ \ }{r\in R}} H^{1}_{cts}(G,A_{r}),
		\end{tikzcd} \]
		by Lemma \ref{L11} the map $ \Theta $ is injective, and by Lemma \ref{L12} the map $ \Theta $ is surjective. Therefore, $ \Theta $ is a continuous bijection. 
		
		In particular, by Lemma \ref{L9}, i), the space $ H^{1}_{cts}(G,A_{r}) $ is Hausdorff for all $ r \in R $; hence $ \varprojlim_{\stackrel{ \ }{r\in R}} H^{1}_{cts}(G,A_{r}) $ is a Hausdorff space cf. \cite[p.~87, 13.8 Theorem. a) \& b)]{MR2048350}; now if $ G $ is a compact and Hausdorff space, and $ A $ is evenly continuous with respect to $ G $, then By Lemma \ref{L9}, ii), the space $ H^{1}_{cts}(G,A) $ is a compact space. Therefore, in such case $ \Theta $ is a continuous bijection from a compact space to a Hausdorff space, which implies that $ \Theta $ is a homeomorphism cf. \cite[p.~123, 17.14 Theorem]{MR2048350}. 
	\end{proof}
	\subsection{Applications of The First Main Theorem}\label{SB6}  
	As an application of Theorem \ref{T1}, one can relate the continuous cohomology sets with coefficients in a finitely generated profinite group, to the inverse limit of cohomology sets with coefficients in finite discrete groups. Another application of Theorem \ref{T1} is concerned with the cohomology set with coefficients in a prosolvable group. We first discuss a general lemma which applies to both of these applications of Theorem \ref{T1}. Recall a profinite groups is a compact, Hausdorff and totally disconnected topological group, which is also an evenly continuous space by Corollary \ref{C3}. 
	\begin{proposition}\label{P2}
		Let $ A $ be a profinite $ G $-group, and assume a collection $ \{N_{r}\}_{r \in R} $ of characteristic closed subgroups of $ A $ is given such that $ \cap_{r \in R}N_{r}=1 $ and $ N_{r} \subseteq N_{t} $ whenever $ r\geqslant t $. Let $ A_{r}=A/N_{r} $. Then $ A $ has a presentation $ A\cong \varprojlim_{\stackrel{ \ }{r\in R}}A_{r} $ by profinite $ G $-groups $ A_{r} $, and $ \varprojlim_{\stackrel{ \ }{r\in R}}A_{r} $ satisfies the assumption of Theorem \ref{T1}; in particular there exists a continuous bijection \[ \Theta : H_{\text{cts}}^{1}(G,A)_{\text{po}} \longrightarrow \varprojlim_{\stackrel{ \ }{r\in R}} H_{\text{cts}}^{1}(G,A_{r})_{\text{po}} \ \ \text{given by} \ \ [a]\longmapsto ([\varphi_{r,\ast}(a)]); \]
		further if $ G $ is a compact and Hausdorff space, and $ A $ is evenly continuous with respect to $ G $, then $ \Theta $ is a homeomorphism.   
	\end{proposition}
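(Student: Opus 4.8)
The plan is to verify that the quotients $A_r = A/N_r$, together with the natural projections, form an inverse system of compact topological $G$-groups with $G$-group homomorphisms as transition maps, to identify $A$ with the inverse limit of that system, and then to quote Theorem \ref{T1} directly.

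First I would check that each $A_r = A/N_r$ is a profinite $G$-group. Being the quotient of the profinite group $A$ by a closed normal subgroup, $A/N_r$ is again compact, Hausdorff and totally disconnected, hence profinite (a standard fact, cf.\ \cite{MR2599132}). Since $N_r$ is characteristic it is invariant under every continuous automorphism of $A$, and in particular under the maps $ac_{\mid_{\{s\}\times A}}$ for $s\in G$ of Lemma \ref{L6}; so $G$ acts on $A/N_r$. To see that this action is continuous, observe that the composite $G\times A \xrightarrow{ac} A \to A/N_r$ is constant on the fibres of $\text{id}_G\times(A\to A/N_r)$ (because $N_r$ is $G$-invariant), and that $\text{id}_G\times(A\to A/N_r)$ is an open surjection, hence a quotient map; therefore the composite descends to a continuous map $G\times(A/N_r)\to A/N_r$, which is compatible with multiplication on $A/N_r$ because $ac$ is compatible with multiplication on $A$. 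So each $A_r$ is a compact $G$-group, indeed profinite.

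Next, for $r\geqslant t$ the containment $N_r\subseteq N_t$ induces a continuous surjective $G$-group homomorphism $\varphi_{rt}: A/N_r\to A/N_t$, and these satisfy $\varphi_{tu}\varphi_{rt}=\varphi_{ru}$; hence $\{A_r,\varphi_{rt},R\}$ is precisely an inverse system of compact $G$-groups with $G$-group transition maps, the input required by Theorem \ref{T1}. It then remains to identify $A$ with $\varprojlim_{r\in R}A_r$. The natural map $\lambda:A\to\varprojlim_{r\in R} A_r$, $a\mapsto(aN_r)_r$, is a continuous $G$-group homomorphism; it is injective because $\bigcap_{r\in R} N_r=1$, and it is surjective by the usual compactness argument: given a compatible family, the cosets $\widetilde a_r\subseteq A$ representing its entries are nonempty closed subsets of $A$ with $\widetilde a_r\subseteq\widetilde a_t$ whenever $r\geqslant t$, so by directedness of $R$ they have the finite intersection property and, $A$ being compact, $\bigcap_{r\in R}\widetilde a_r\neq\emptyset$; any point of it is a $\lambda$-preimage. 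A continuous bijection from a compact space to the Hausdorff space $\varprojlim_{r\in R} A_r$ is a homeomorphism cf.\ \cite[p.~123, 17.14 Theorem]{MR2048350}, so $A\cong\varprojlim_{r\in R} A_r$ as topological $G$-groups.

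Having checked all the hypotheses, Theorem \ref{T1} applies and yields the natural continuous bijection $\Theta:H^1_{\text{cts}}(G,A)_{\text{po}}\to\varprojlim_{r\in R} H^1_{\text{cts}}(G,A_r)_{\text{po}}$, $[a]\mapsto([\varphi_{r\ast}(a)])$; and when $G$ is moreover compact and Hausdorff and $A$ is evenly continuous with respect to $G$, the ''in particular'' clause of Theorem \ref{T1} upgrades $\Theta$ to a homeomorphism. The only step that is not pure bookkeeping with inverse systems is descending the $G$-action continuously to the quotients and checking that $\lambda$ is a homeomorphism; I expect the mild point there — that $\text{id}_G\times q$ is a quotient map when $q$ is an open surjection, together with the compact-to-Hausdorff argument — to be the thing to get right, and everything else to follow formally.
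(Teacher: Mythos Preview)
Your proof is correct and follows the same overall strategy as the paper: verify that the $A_r$ form an inverse system of compact $G$-groups, identify $A$ with $\varprojlim_{r} A_r$, and invoke Theorem \ref{T1}. The one place you diverge is in establishing the isomorphism $A\cong\varprojlim_{r} A_r$: the paper appeals to exactness of $\varprojlim$ on the category of profinite groups applied to the short exact sequences $1\to N_r\to A\to A_r\to 1$ (cf.\ \cite[Proposition 2.2.4]{MR2599132}), whereas you argue directly via the finite intersection property and the compact-to-Hausdorff homeomorphism criterion; your route is slightly more elementary and self-contained, while the paper's is a one-line citation. You are also more explicit than the paper about why the $G$-action descends continuously to the quotients, which is a point the paper glosses over.
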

	\begin{proof}
		Since $ N_{r} $ is a closed normal subgroup of $ A $ for all $ r\in R $, the quotient group $ A_{r} $ is a profinite group for all $ r \in R $ cf. \cite[p.~28, Proposition 2.2.1, (a)]{MR2599132}. By Lemma \ref{L6}, the group $ G $ acts by automorphism on $ A $; in particular, since $ N_{r} $ is a characteristic subgroup of $ A $, the exact sequence
		\[ 
		\begin{tikzcd}[row sep=1.1em , column sep=1.2em]
		1 \arrow[]{r}[]{} &  N_{r} \arrow{r}[]{}  & A \arrow{r}[]{}   &  A_{r} \arrow{r}[]{} & 1,  
		\end{tikzcd} \] 
		is an exact sequence of $ G $-groups. Therefore, the set $ \{A_{r}\}_{r \in R} $ is an inverse system, where the obvious transition $ A_{r}\twoheadrightarrow A_{t} $, whenever $ r\geq t $, are $ G $-group homomorphisms. Now since $ \varprojlim_{\stackrel{ \ }{r\in R}} $ is an exact functor on the category of profinite groups cf. \cite[p.~31, Proposition 2.2.4]{MR2599132}, taking the inverse limit we obtain an exact sequence 
		\[ 
		\begin{tikzcd}[row sep=1.1em , column sep=1.2em, ]
		1 \arrow[]{r}[]{} & \varprojlim_{\stackrel{ \ }{r \in R}} N_{r} \arrow{r}[]{}  & \varprojlim_{\stackrel{ \ }{r \in R}}A \arrow{r}[]{}   & \varprojlim_{\stackrel{ \ }{r \in R}} A_{r} \arrow{r}[]{} & 1,  
		\end{tikzcd} \]
		where $ \varprojlim_{\stackrel{ \ }{r \in R}} N_{r}\cong\cap_{r \in R}N_{r}=1 $, and $ \varprojlim_{\stackrel{ \ }{r \in R}}A \cong A $, so we have a natural homeomorphism  $ A\cong \varprojlim_{\stackrel{ \ }{r\in R}}A_{r} $. 
		
		Now $ \varprojlim_{\stackrel{ \ }{r \in R}}A_{r} $ satisfies the assumption of Theorem \ref{T1}; therefore, we have a continuous bijection  \[ \Theta : H_{\text{cts}}^{1}(G,\varprojlim_{\stackrel{ \ }{r\in R}}A_{r})_{\text{po}} \longrightarrow \varprojlim_{\stackrel{ \ }{r\in R}} H_{\text{cts}}^{1}(G,A_{r})_{\text{po}}. \] 
		We further have a homeomorphism \[  H_{\text{cts}}^{1}(G,A)_{\text{po}} \cong H_{\text{cts}}^{1}(G,\varprojlim_{\stackrel{ \ }{r \in R}}A_{r})_{\text{po}} \] 
		induced by the homeomorphism $ A\cong \varprojlim_{\stackrel{ \ }{r\in R}}A_{r} $. Hence, the map \[ H_{\text{cts}}^{1}(G,A)_{\text{po}} \longrightarrow \varprojlim_{\stackrel{ \ }{r\in R}} H_{\text{cts}}^{1}(G,A_{r})_{\text{po}}, \] which is the composition \[ H_{\text{cts}}^{1}(G,A)_{\text{po}}\cong H_{\text{cts}}^{1}(G,\varprojlim_{\stackrel{ \ }{r \in R}}A_{r})_{\text{po}}\longrightarrow \varprojlim_{\stackrel{ \ }{r\in R}} H_{\text{cts}}^{1}(G,A_{r})_{\text{po}} \] is a continuous bijection. In particular, according to Theorem \ref{T1}, if $ G $ is a compact and Hausdorff space, and $ A $ is evenly continuous with respect to $ G $, then $ \Theta $ is a homeomorphism.    
	\end{proof}  
	\begin{corollary}\label{C4}
		Suppose $ A $ is a finitely generated profinite $ G $-group. Then $ A $ has a presentation $ A\cong \varprojlim_{\stackrel{ \ }{r\in \mathbb{N}}}A_{r} $ by finite $ G $-groups $ A_{r} $ for $ r\in \mathbb{N} $ satisfying the assumption of Theorem \ref{T1}; so there exists a there exist a continuous bijection \[ \Theta : H_{\text{cts}}^{1}(G,A)_{\text{po}} \longrightarrow \varprojlim_{\stackrel{ \ }{r\in \mathbb{N}}} H_{\text{cts}}^{1}(G,A_{r})_{\text{po}}, \] 
		and if $ G $ is a compact and Hausdorff space, and $ A $ is evenly continuous with respect to $ G $, then $ \Theta $ is a homeomorphism.   
	\end{corollary}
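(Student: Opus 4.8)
The strategy is to manufacture, out of the finite generation hypothesis, a \emph{countable} decreasing chain of \emph{characteristic} open subgroups of $ A $ with trivial intersection, and then quote Proposition \ref{P2} verbatim.

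First I would record the standard fact that a topologically finitely generated profinite group has only finitely many open subgroups of each fixed index $ r $: an open subgroup $ U\le A $ with $ [A:U]=r $ yields an action of $ A $ on the coset space $ A/U $, hence a continuous homomorphism $ A\to S_{r} $ together with a marked point of $ \{1,\dots,r\} $, and a finitely generated group admits only finitely many homomorphisms into the finite group $ S_{r} $ (cf.\ \cite{MR2599132}). Consequently, for $ r\in\mathbb{N} $ the subgroup
\[ N_{r}\stackrel{\mathrm{def}}{=}\bigcap\bigl\{U\le A:\ U\ \text{open},\ [A:U]\le r\bigr\} \]
is a \emph{finite} intersection of open subgroups, hence open in $ A $ (so closed, and of finite index); it is characteristic, because the family of open subgroups of index at most $ r $ is carried into itself by every continuous automorphism of $ A $, and the intersection of a family of subgroups stable under all continuous automorphisms is again stable under all continuous automorphisms; one has $ N_{r}\subseteq N_{t} $ for $ r\ge t $, since increasing $ r $ only enlarges the family being intersected; and $ \bigcap_{r\in\mathbb{N}}N_{r}=\bigcap\{U\le A:\ U\ \text{open}\}=1 $, because the open normal subgroups of the profinite, hence Hausdorff, group $ A $ form a neighbourhood basis of the identity.

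With $ R=\mathbb{N} $ (a directed poset) and the collection $ \{N_{r}\}_{r\in\mathbb{N}} $ just built, Proposition \ref{P2} applies directly: it gives a presentation $ A\cong\varprojlim_{r\in\mathbb{N}}A_{r} $ with $ A_{r}=A/N_{r} $, each $ A_{r} $ being a \emph{finite} $ G $-group because $ N_{r} $ has finite index, and it produces the continuous bijection
\[ \Theta : H^{1}_{\text{cts}}(G,A)_{\text{po}}\longrightarrow\varprojlim_{r\in\mathbb{N}}H^{1}_{\text{cts}}(G,A_{r})_{\text{po}},\qquad [a]\longmapsto\bigl([\varphi_{r\ast}(a)]\bigr). \]
The final clause is then immediate, since the extra hypotheses ``$ G $ compact and Hausdorff'' and ``$ A $ evenly continuous with respect to $ G $'' are precisely those under which Proposition \ref{P2} (via Theorem \ref{T1}) already asserts that $ \Theta $ is a homeomorphism.

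Essentially everything here is bookkeeping once the first step is available; the only genuine input — and the only place a difficulty could hide — is that topological finite generation forces finitely many open subgroups of each index. That is exactly what guarantees each $ N_{r} $ is \emph{open}, which is in turn what upgrades the description of $ A $ from a possibly uncountable inverse limit of arbitrary profinite quotients to a \emph{countable} inverse limit of \emph{finite} $ G $-groups, matching the form of the corollary's statement.
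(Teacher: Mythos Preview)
Your proof is correct and follows essentially the same route as the paper: construct a countable decreasing chain of open characteristic subgroups of $A$ with trivial intersection, then invoke Proposition~\ref{P2}. The only difference is that the paper simply cites \cite[p.~44, Proposition~2.5.1]{MR2599132} for the existence of such a chain, whereas you unpack that proposition by explicitly defining $N_r$ as the intersection of all open subgroups of index at most $r$ and verifying the required properties---which is precisely the standard proof of the cited result.
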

	\begin{proof}
		Since $ A $ is a finitely generated profinite group, $ 1 \in A $ has a fundamental system of neighbourhoods consisting of a countable chain of open characteristic subgroups \[A=N_{0}\supseteq  N_{1} \supseteq  N_{2}, ...  \] 
		cf. \cite[p.~44, Proposition 2.5.1]{MR2599132}. Therefore, we have a natural homeomorphism $ A\cong\varprojlim_{\stackrel{ \ }{r\in \mathbb{N}}}A_{r} $, and $ \varprojlim_{\stackrel{ \ }{r\in \mathbb{N}}}A_{r} $ satisfies the assumption of Theorem \ref{T1}. Now the corollary follows from Proposition \ref{P2}.		
	\end{proof}
	\begin{corollary}\label{C5}
		Suppose $ A $ is a prosolvable\footnote{See \cite[p.~19]{MR2599132} for definition.} $ G $-group. Then $ A $ has a presentation $ A\cong \varprojlim_{\stackrel{ \ }{i\geq 1}}A_{i} $ by profinite $ G $-groups $ A_{i} $, for $ i \in \mathbb{N} $, satisfying the assumption of Theorem \ref{T1}; so there exists a continuous bijection \[ H^{1}_{\text{cts}}(G,A)_{\text{po}} \longrightarrow \varprojlim_{\stackrel{ \ }{i\geq 1}} H^{1}_{\text{cts}}(G,A_{i})_{\text{po}}, \] 
		and if $ G $ is a compact and Hausdorff space, and $ A $ is evenly continuous with respect to $ G $, then $ \Theta $ is a homeomorphism.
	\end{corollary}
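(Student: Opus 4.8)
The plan is to realise $A$ as one of the inverse limits already covered by Proposition \ref{P2}: I will take for the characteristic closed subgroups $N_{i}$ the terms of the closed derived series of $A$, and then the corollary will follow by a direct appeal to Proposition \ref{P2}.

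Concretely, I would set $N_{1}=\overline{[A,A]}$ and inductively $N_{i+1}=\overline{[N_{i},N_{i}]}$ for $i\geq 1$. Each $N_{i}$ is closed by construction and equals the closure $\overline{A^{(i)}}$ of the $i$-th abstract derived subgroup; since $A^{(i)}$ is invariant under every automorphism of $A$, its closure $N_{i}$ is invariant under every continuous automorphism, i.e.\ $N_{i}$ is a characteristic closed subgroup. Moreover $N_{i+1}\subseteq N_{i}$, so ordering $R=\mathbb{N}$ in the usual way gives $N_{r}\subseteq N_{t}$ whenever $r\geq t$. Setting $A_{i}:=A/N_{i}$, which is a profinite $G$-group (the quotient of the profinite $G$-group $A$ by a closed characteristic subgroup, cf.\ \cite[p.~28, Proposition 2.2.1, (a)]{MR2599132}), Proposition \ref{P2} will then yield the presentation $A\cong\varprojlim_{i\geq 1}A_{i}$, the fact that $\varprojlim_{i}A_{i}$ satisfies the hypotheses of Theorem \ref{T1}, the continuous bijection $\Theta$, and the homeomorphism conclusion under the even-continuity hypothesis.

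The one point that genuinely needs an argument, and which I expect to be the main obstacle, is verifying that $\bigcap_{i\geq 1}N_{i}=1$. Here I would use that $A$ is prosolvable: write $A\cong\varprojlim_{j}H_{j}$ with each $H_{j}$ a finite solvable group, so that the kernels $K_{j}=\ker(A\twoheadrightarrow H_{j})$ satisfy $\bigcap_{j}K_{j}=1$. For a fixed $j$, if $d_{j}$ denotes the derived length of $H_{j}$ then $H_{j}^{(d_{j})}=1$, and since $A\twoheadrightarrow H_{j}$ carries $A^{(d_{j})}$ onto $H_{j}^{(d_{j})}=1$ we get $A^{(d_{j})}\subseteq K_{j}$; as $K_{j}$ is closed this upgrades to $N_{d_{j}}=\overline{A^{(d_{j})}}\subseteq K_{j}$. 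Hence $\bigcap_{i}N_{i}\subseteq\bigcap_{j}N_{d_{j}}\subseteq\bigcap_{j}K_{j}=1$, as required. (If \cite{MR2599132} already records that the closed derived series of a prosolvable group converges to $1$, this step can instead simply cite that.)

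Once $\bigcap_{i}N_{i}=1$ is established, the remainder is purely formal: feeding $\{N_{i}\}_{i\geq 1}$ into Proposition \ref{P2} delivers every assertion of the corollary, the homeomorphism part being the last clause of Theorem \ref{T1} transported along the homeomorphism $A\cong\varprojlim_{i}A_{i}$ exactly as in the proof of Proposition \ref{P2}.
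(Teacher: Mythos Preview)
Your proposal is correct and follows essentially the same route as the paper's proof: both take the closed derived series $A(i+1)=\overline{[A(i),A(i)]}$ as the descending chain of characteristic closed subgroups, verify $\bigcap_{i}A(i)=1$ by pushing down to the finite solvable quotients (where the derived series terminates in finitely many steps), and then invoke Proposition~\ref{P2}. The only cosmetic difference is that you phrase the key containment as $N_{d_{j}}=\overline{A^{(d_{j})}}\subseteq K_{j}$ using closedness of $K_{j}$, whereas the paper phrases it as $\varphi_{r}(A(d_{r}))\subseteq A_{r}(d_{r})=1$; these are the same observation.
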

	\begin{proof}
		Suppose $ A $ is a prosolvable group. Then one can write $ A =\varprojlim_{\stackrel{ \ }{r \in R}}A_{r} $ with $ A_{r} $ finite solvable group for each $ r \in R $, and continuous surjective homomorphisms $ \varphi_{r} : A \longrightarrow A_{r} $ such that $ \cap_{r\in R}\Ker\varphi_{r}=1 $ cf. \cite[p.~22, Theorem 2.1.3, (c)]{MR2599132}. 
		
		Define $ A(i+1)\stackrel{\mathrm{def}}{=}\overline{[A(i):A(i)]} $ to be the closure of the commutator subgroup of $ A(i) $, i.e., $ [A(i):A(i)] $ is generated by commutator elements of $ A(i) $, for $ i \in \mathbb{N} $, and let $ A(0)=A $. By induction on $ i $, one can show that $ A(i) $ is a characteristic subgroup of $ A $ for every $ i \geq 1 $; we set $ A_{i}=A/A(i) $. Now $ \varphi_{r}(A(i)) \subseteq A_{r}(i) $ for all $ i \geq 1 $, and since for all $ r \in R $ there exists $ d_{r}\in \mathbb{N} $ with $ A_{r}(d_{r}) =1 $ (since $ A_{r} $ is solvable for each $ r \in R $)  one has $ \varphi_{r}(A(d_{r}))=1 $, hence $ \cap_{i\geq1}A(i) \subset A(d_{r}) \subset \Ker\varphi_{r} $ for all $ r \in R $ i.e., $ \cap_{i\geq 1}A(i) \subseteq \cap_{r} \Ker\varphi_{r} =1 $. Therefore, one has a natural homeomorphism $ A \cong\varprojlim_{\stackrel{ \ }{i \geq 1}}A_{i} $, and $ \varprojlim_{\stackrel{ \ }{i \geq 1}}A_{i} $ satisfies the assumption of Theorem \ref{T1}. Now the corollary follows from Proposition \ref{P2}. 	
	\end{proof}
	\section{$ G $-Modules and The Second Main Theorem}\label{S4} 
	In this section we shall fix $ G $ to be a compact and Hausdorff topological group, and work with abelian $ G $-groups, which also known, and we shall refer to, as $ G $-modules, also $ R $ is a fixed directed poset and we assume $ \{A_{r}, \varphi_{rs}, R\} $ is an inverse system of compact topological $ G $-modules, where the transition maps are $ G $-group homomorphism. Given a $ G $-module $ A $, one can define the continuous cohomology sets $ H^{n}_{\text{cts}}(G,A) $ for all $ n\geq0 $ cf. \cite[p.~137]{MR2392026}. Denote by $ G^{n} $ the product of $ n $ copies of $ G $ endowed with product topology. Then the main result of this article is the following.
	\begin{theorem}\label{T2}
		Let $ A $ a compact $ G $-module and fix $ n\geq 1 $. Assume $ A $ has a presentation $ A= \varprojlim_{\stackrel{ \ }{r\in R}}A_{r} $, where $ A_{r} $ is evenly continuous with respect to $ G^{n-1} $ for all $ r\in R $. Denote by $ \varphi_{r}: A\longrightarrow A_{r} $ the natural projections. Then there exists a continuous bijection \[ \Theta_{n} : H_{\text{cts}}^{n}(G,A)_{\text{po}} \longrightarrow \varprojlim_{\stackrel{ \ }{r\in R}} H_{\text{cts}}^{n}(G,A_{r})_{\text{po}} \ \ \text{given by} \ \ [a]\longmapsto ([\varphi_{r\ast}(a)]). \]
		In particular, if $ A $ is evenly continuous with respect to $ G^{n} $, then $ \Theta_{n} $ is a homeomoprhism.
	\end{theorem}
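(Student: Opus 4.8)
The plan is to mirror the architecture of the proof of Theorem~\ref{T1}, replacing the torsor and orbit machinery of \S\ref{SB3}--\S\ref{SB5} by the inhomogeneous cochain complex. Write $C^{m}_{\text{cts}}(G,A)_{\text{po}}=M_{\text{cts}}(G^{m},A)_{\text{po}}$ for the space of continuous $m$-cochains; by Lemma~\ref{L8} this is an abelian topological group, indeed a $G$-module. The first preliminary step is to check that the coboundary operator $d^{m}\colon C^{m}_{\text{cts}}(G,A)_{\text{po}}\longrightarrow C^{m+1}_{\text{cts}}(G,A)_{\text{po}}$ is a continuous group homomorphism: additivity is clear since $A$ is abelian, and continuity is a direct verification on a subbasic open set $\beta(\{(s_{1},\dots,s_{m+1})\},V)$, unwinding $d^{m}$ into finitely many evaluations composed with $ac$ and $m_{A}$ and applying their continuity repeatedly, exactly as in the proof of Lemma~\ref{L7} and Proposition~\ref{P1}. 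Consequently $Z^{n}_{\text{cts}}(G,A)_{\text{po}}=\Ker d^{n}$ is a closed subgroup, being the preimage of the closed point $0$ in the Hausdorff group $C^{n+1}_{\text{cts}}(G,A)_{\text{po}}$ (Lemma~\ref{L3}).

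The decisive topological input is that coboundaries form a closed subgroup, and this is exactly what the even continuity hypothesis supplies. Since $G$ is compact and Hausdorff, so is $G^{n-1}$; hence, as each $A_{r}$ is compact, Hausdorff and evenly continuous with respect to $G^{n-1}$, Lemma~\ref{L5} shows that $C^{n-1}_{\text{cts}}(G,A)_{\text{po}}=M_{\text{cts}}(G^{n-1},A)_{\text{po}}$ is compact, and likewise each $C^{n-1}_{\text{cts}}(G,A_{r})_{\text{po}}$ is compact. Therefore $B^{n}_{\text{cts}}(G,A_{r})_{\text{po}}=d^{n-1}\bigl(C^{n-1}_{\text{cts}}(G,A_{r})_{\text{po}}\bigr)$ is the continuous image of a compact space, so it is a compact, hence closed, subgroup of the Hausdorff group $C^{n}_{\text{cts}}(G,A_{r})_{\text{po}}$; thus $H^{n}_{\text{cts}}(G,A_{r})_{\text{po}}=Z^{n}/B^{n}$ is a Hausdorff topological abelian group, and the same holds for $A$ in place of $A_{r}$.

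With these facts in hand the construction of $\Theta_{n}$ is routine functoriality: a $G$-module homomorphism induces continuous maps on $C^{m}$, $Z^{m}$ and $B^{m}$, hence on $H^{m}$ (the last as in Remark~\ref{R4}, using that the quotient map is open), so $\{H^{n}_{\text{cts}}(G,A_{r})_{\text{po}},\overline{\varphi}_{rt\ast},R\}$ is an inverse system and the projections $\varphi_{r}$ induce a commutative square relating $\theta$ — which is a homeomorphism by Lemma~\ref{L4}, as $A$ is compact and each $A_{r}$ Hausdorff — to a well-defined continuous map $\Theta_{n}$. Injectivity and surjectivity then repeat Lemmas~\ref{L11} and~\ref{L12} with cosets in place of orbits. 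For injectivity, if $\Theta_{n}([a])=\Theta_{n}([b])$ then $S_{r}=\{c\in C^{n-1}_{\text{cts}}(G,A_{r})_{\text{po}}:d^{n-1}c=\varphi_{r}a-\varphi_{r}b\}$ is a coset of the compact group $Z^{n-1}_{\text{cts}}(G,A_{r})_{\text{po}}$, hence nonempty, compact and Hausdorff, and $\{S_{r}\}$ is an inverse system because $\varphi_{rt}$ commutes with $d^{n-1}$; so $\varprojlim_{r\in R}S_{r}\neq\emptyset$ by \cite[p.~4, Proposition 1.1.4]{MR2599132}, and transporting any element through $\theta^{-1}$ produces $c\in C^{n-1}_{\text{cts}}(G,A)_{\text{po}}$ with $d^{n-1}c=a-b$ (injectivity of $\theta$), whence $[a]=[b]$. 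For surjectivity, given $([a_{r}])\in\varprojlim_{r\in R}H^{n}_{\text{cts}}(G,A_{r})_{\text{po}}$ with cocycle representatives $a_{r}$, the fibres $T_{r}=a_{r}+B^{n}_{\text{cts}}(G,A_{r})_{\text{po}}$ are translates of a compact group, hence nonempty, compact and Hausdorff, and form an inverse system; any $a\in\varprojlim_{r\in R}T_{r}\hookrightarrow\varprojlim_{r\in R}Z^{n}_{\text{cts}}(G,A_{r})_{\text{po}}$ gives $\theta^{-1}(a)\in Z^{n}_{\text{cts}}(G,A)_{\text{po}}$ with $\Theta_{n}([\theta^{-1}(a)])=([a_{r}])$. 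Finally, if $A$ is evenly continuous with respect to $G^{n}$ then $C^{n}_{\text{cts}}(G,A)_{\text{po}}$ is compact by Lemma~\ref{L5}, so $Z^{n}_{\text{cts}}(G,A)_{\text{po}}$ and hence $H^{n}_{\text{cts}}(G,A)_{\text{po}}$ is compact, and a continuous bijection from a compact space onto the Hausdorff space $\varprojlim_{r\in R}H^{n}_{\text{cts}}(G,A_{r})_{\text{po}}$ is a homeomorphism.

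I expect the main obstacle to be precisely the ``coboundaries are closed'' step of the second paragraph: without compactness of the space of continuous $(n-1)$-cochains — which is exactly what even continuity with respect to $G^{n-1}$ provides — the subgroup $B^{n}$ need not be closed, $H^{n}$ need not be Hausdorff, and the inverse-limit-of-nonempty-compacta arguments underlying injectivity and surjectivity break down. A secondary, purely technical nuisance is the bookkeeping needed to verify continuity of $d^{m}$ directly in the point-open topology, but this runs entirely parallel to the computations already performed for $F_{s,t}$ in Lemma~\ref{L7} and for $cb$ in Proposition~\ref{P1}.
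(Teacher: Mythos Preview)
Your proposal is correct and follows essentially the same route as the paper: the paper packages the argument by letting the compact group $M_{\text{cts}}(G^{n-1},A)_{\text{po}}$ act on $Z^{n}_{\text{cts}}(G,A)_{\text{po}}$ via $(a,f)\mapsto a+d_{n}f$, so that your ``$B^{n}$ is compact hence closed, $H^{n}$ Hausdorff'' step becomes their Corollary~\ref{C6}/\ref{C7} and Lemma~\ref{L14}, and your cosets $S_{r}$ and translates $T_{r}$ are literally their stabiliser-cosets and orbits in Lemmas~\ref{L16} and~\ref{L17}. The only cosmetic difference is that you argue directly with the closed subgroup $B^{n}$ while the paper invokes the compact-group-action machinery of \cite{MR0413144}; the underlying mathematics is identical.
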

	
	The proof of Theorem \ref{T2} is somewhat similar to the proof of Theorem \ref{T1}. We fix $ n\geq1 $ and start by introducing the point-open topology on the set of $ n $-cocycles of $ G $ with values in $ A $,  $ Z_{\text{cts}}^{n}(G,A) $. Since $ A_{r} $ are compact $ G $-modules, which are evenly continuous with respect to $ G^{n-1} $ and for all $ r \in R $, we have that $ M_{\text{cts}}(G^{n-1},A) $ is a compact and Hausdorff space. Then we interpret $ H_{\text{cts}}^{n}(G,A) $ as a quotient space of $ Z_{\text{cts}}^{n}(G,A) $ by a continuous action of a compact and Hausdorff topological group $ M_{\text{cts}}(G^{n-1},A) $, and from these we deduce our theorem.
	
	We fix a $ G $-module $ A $, sometimes we shall use ''$ + $'' instead of the operation of $ m_{A} $ of $ A $.  As before, we denote by $ M_{\text{cts}}(G^{n},A)_{\text{po}} $ the set of all continuous maps from $ G^n\longrightarrow A $ endowed with point-open topology. Recall we let $ M_{\text{cts}}(G^{0},A)_{\text{po}} $ be the set of all constant maps from $ G\longrightarrow A $, so we have natural identification $ M_{\text{cts}}(G^{0},A)_{\text{po}}\cong A $. It follows from Lemma \ref{L8} that $ M_{\text{cts}}(G^{n},A)_{\text{po}} $ is a $ G $-module for all $ n\geq 1 $. Now there exists a well-known \textit{differential homomorphism} \[d_{n}:M_{\text{cts}}(G^{n-1},A)_{\text{po}}\longrightarrow M_{\text{cts}}(G^{n},A)_{\text{po}} \ \ \text{given by} \ \ f\longmapsto d_{n}f  \] for $ n\geq1 $, defined by
	\[d_{n}f_{s_{1},...,s_{n}}\stackrel{\mathrm{def}}{=} \ ^{s_{1}}f_{s_{2},...,s_{n}}+ \sum_{i=1}^{n-1}(-1)^{i}f_{s_{1},...,s_{i}s_{i+1},...s_{n}}+(-1)^{n}f_{s_{1},..,s_{n-1}}, \]      
	which has the property that $ d_{n+1}d_{n}=0 $. 
	\begin{lemma}\label{L13}
		Let $ A $ be a $ G $-module. Then for each $ n\geq1 $ the map  \[d_{n}:M_{\text{cts}}(G^{n-1},A)_{\text{po}}\longrightarrow M_{\text{cts}}(G^{n},A)_{\text{po}} \ \ \text{given by} \ \ f\longmapsto d_{n}f,  \]
		defined above, is a continuous group homomorphism.
	\end{lemma}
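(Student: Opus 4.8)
The plan is to treat the two assertions separately — that $d_{n}$ is additive, and that it is continuous — and to reduce continuity to the continuity of finitely many evaluation maps, exploiting that the point-open topology on $M_{\text{cts}}(G^{n},A)$ is the subspace topology induced from the product (Tychonoff) topology on $M(G^{n},A)_{\text{po}}\cong\prod_{b\in G^{n}}A_{b}$, hence the initial topology for the projections $\varepsilon_{b}$, $b\in G^{n}$.

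First, that $d_{n}$ is a group homomorphism. Since $A$ is abelian and $G$ acts on $A$ by group automorphisms (Lemma~\ref{L6}), for $f,g\in M_{\text{cts}}(G^{n-1},A)_{\text{po}}$ and $(s_{1},\dots,s_{n})\in G^{n}$ one expands $d_{n}(f+g)_{s_{1},\dots,s_{n}}$ term by term, applies $^{s_{1}}(x+y)={}^{s_{1}}x+{}^{s_{1}}y$ to the twisted summand, and regroups (legitimately, as $A$ is commutative) to obtain $d_{n}f_{s_{1},\dots,s_{n}}+d_{n}g_{s_{1},\dots,s_{n}}$; thus $d_{n}(f+g)=d_{n}f+d_{n}g$. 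That $d_{n}f$ is again continuous, so that $d_{n}$ is well defined as asserted, is immediate: each of its $n+1$ summands is a composition of continuous maps, namely the face maps $G^{n}\to G^{n-1}$ built from the multiplication of $G$ and from projections, the action $ac$, the inversion $\iota_{A}$, and the multiplication $m_{A}$.

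For continuity, since $M_{\text{cts}}(G^{n},A)_{\text{po}}$ is a subspace of $M(G^{n},A)_{\text{po}}\cong\prod_{b\in G^{n}}A_{b}$, a map into it is continuous as soon as it lands in $M_{\text{cts}}(G^{n},A)$ and each coordinate $\varepsilon_{b}\circ(-)$ is continuous; hence it suffices to check that for each $b=(s_{1},\dots,s_{n})\in G^{n}$ the composite $\varepsilon_{b}\circ d_{n}:M_{\text{cts}}(G^{n-1},A)_{\text{po}}\to A$ is continuous. By the defining formula,
\[ \varepsilon_{b}(d_{n}f)={}^{s_{1}}\!\bigl(\varepsilon_{(s_{2},\dots,s_{n})}(f)\bigr)+\sum_{i=1}^{n-1}(-1)^{i}\,\varepsilon_{(s_{1},\dots,s_{i}s_{i+1},\dots,s_{n})}(f)+(-1)^{n}\,\varepsilon_{(s_{1},\dots,s_{n-1})}(f), \]
and each evaluation occurring here is continuous; post-composing the first one with $\rho(s_{1})=ac_{\mid_{\{s_{1}\}\times A}}\in\text{Aut}_{\text{cts}}(A)$, which is continuous by Lemma~\ref{L6}, preserves continuity; and finally combining the resulting $n+1$ continuous maps $M_{\text{cts}}(G^{n-1},A)_{\text{po}}\to A$ by means of the continuous operations $m_{A}$ and $\iota_{A}$ of $A$ produces a continuous map. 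So $\varepsilon_{b}\circ d_{n}$ is continuous for every $b$, and $d_{n}$ is continuous. Alternatively, one can argue directly in the style of Lemma~\ref{L7} and Lemma~\ref{L8}, peeling off basic open sets $\beta(\{c\},U)$ (with $c$ running over the relevant points of $G^{n-1}$) from a preimage $d_{n}^{-1}(\beta(\{(s_{1},\dots,s_{n})\},V))$ by repeated use of continuity of $m_{A}$, $\iota_{A}$, and $ac$.

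I expect the only mildly delicate point to be the twisted summand $^{s_{1}}f_{s_{2},\dots,s_{n}}$: although $s_{1}$ ranges over all of $G$ in the cocycle formula, when checking continuity at a single evaluation point $(s_{1},\dots,s_{n})$ the element $s_{1}$ is fixed, so $f\mapsto{}^{s_{1}}f_{s_{2},\dots,s_{n}}$ is nothing but the continuous evaluation $\varepsilon_{(s_{2},\dots,s_{n})}$ followed by the fixed continuous automorphism $\rho(s_{1})$ of $A$ — there is no need to invoke joint continuity of the action here.
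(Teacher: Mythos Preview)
Your proof is correct and takes a different, more streamlined route than the paper's own argument. The paper proves continuity directly in the style of Lemma~\ref{L7} and Lemma~\ref{L8}: it fixes a subbasic open set $\beta_{\text{cts}}(\{(s_{1},\dots,s_{n})\},V)$ in the codomain, takes $f$ in its preimage, and by repeatedly applying the continuity of $m_{A}$ and of $ac$ peels off open subsets $U_{j},V_{j}\subseteq A$ until it has exhibited an open neighbourhood of $f$, written as a finite intersection of subbasic sets $\beta_{\text{cts}}(\{\cdot\},\cdot)$ in $M_{\text{cts}}(G^{n-1},A)_{\text{po}}$, that is contained in $d_{n}^{-1}(\beta_{\text{cts}}(\{(s_{1},\dots,s_{n})\},V))$. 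You instead invoke the universal property of the product topology --- the point-open topology is the initial topology for the evaluations $\varepsilon_{b}$ --- to reduce to showing that each $\varepsilon_{b}\circ d_{n}$ is continuous, which then follows by inspection from the defining formula together with the continuity of $\rho(s_{1})$, $m_{A}$, $\iota_{A}$, and the evaluations on $M_{\text{cts}}(G^{n-1},A)_{\text{po}}$. Your approach is shorter and more conceptual, and makes transparent why the point-open topology is well suited here; the paper's approach is more hands-on and avoids any appeal to initial topologies, at the cost of a longer iterative computation. You even anticipate the paper's method in your closing paragraph as an alternative.
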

	\begin{proof}
		Fix $ n\geq1 $. Then the map $ d_{n} $ is a group homomorphism, so we only require to that show $ d_{n} $ is a continuous map. Fix a point $ \textbf{s}\stackrel{\mathrm{def}}{=}(s_{1},...,s_{n}) \in G^{n}  $ and an open set $ V \subseteq A $. Let $ \beta_{\text{cts}}(\{\textbf{s}\},V) \subseteq M_{\text{cts}}(G^{n},A)_{\text{po}}  $ be an element of the subbase for the point-open topology on $ M_{\text{cts}}(G^{n},A)_{\text{po}}  $. Let $ f \in d_{n}^{-1}(\beta_{\text{cts}}(\{\textbf{s}\},V)) $. Then \[d_{n}f_{s_{1},...,s_{n}}\stackrel{\mathrm{def}}{=} m_{A}\left( \ ^{s_{1}}f_{s_{2},...,s_{n}}, m_{A}\left(\sum_{i=1}^{n-1}(-1)^{i}f_{s_{1},...,s_{i}s_{i+1},...s_{n}},(-1)^{n}f_{s_{1},..,s_{n-1}}\right)\right) \in V, \]
		since $ m_{A} $ is a continuous map and $ V $ is an open subset, there exists open subsets $ U_{1},V_{1} \subseteq A $ such that \[\left( \ ^{s_{1}}f_{s_{2},...,s_{n}}, m_{A}\left(\sum_{i=1}^{n-1}(-1)^{i}f_{s_{1},...,s_{i}s_{i+1},...s_{n}},(-1)^{n}f_{s_{1},..,s_{n-1}}\right)\right) \in U_{1}\times V_{1} \subseteq m_{A}^{-1}(V). \] 
		Therefore, $ ^{s_{1}}f_{s_{2},...,s_{n}} \in U_{1} $ and $ m_{A}\left(\sum_{i=1}^{n-1}(-1)^{i}f_{s_{1},...,s_{i}s_{i+1},...s_{n}},(-1)^{n}f_{s_{1},..,s_{n-1}}\right) \in V_{1}  $. Now since the action map $ ac: G\times A \longmapsto A $ is continuous, there exists open subsets $ U_{2} \subseteq G $ and $ V_{2} \subseteq A $ such that \[(s_{1},f_{s_{2},...,s_{n}}) \in U_{2}\times V_{2} \subseteq ac^{-1}(U_{1}),   \]
		so $ s_{1} \in U_{2} $ and $ f \in \beta_{\text{cts}}(\{(s_{2},...,s_{n})\},V_{2}) $. Now since \[ m_{A}\left(\sum_{i=1}^{n-1}(-1)^{i}f_{s_{1},...,s_{i}s_{i+1},...s_{n}},(-1)^{n}f_{s_{1},..,s_{n-1}}\right) \in V_{1}, \] by continuity of $ m_{A} $, there exists open subsets $ U_{3},V_{3} \subseteq A $ such that \[\left(\sum_{i=1}^{n-1}(-1)^{i}f_{s_{1},...,s_{i}s_{i+1},...s_{n}},(-1)^{n}f_{s_{1},..,s_{n-1}}\right) \in V_{3}\times U_{3} \subseteq m_{A}^{-1}(V_{1}), \] 
		so $ f \in \beta_{\text{cts}}(\{(s_{1},...,s_{n-1})\},(-1)^{n}U_{3})  $ and \[\sum_{i=1}^{n-1}(-1)^{i}f_{s_{1},...,s_{i}s_{i+1},...s_{n}}\stackrel{\mathrm{def}}{=} m_{A}\left(-f_{s_{1}s_{2},...,s_{n}}, \sum_{i=2}^{n-1}(-1)^{i}f_{s_{1},...,s_{i}s_{i+1},...s_{n}}\right) \in V_{3}. \]
		Again by continuity of $ m_{A} $, there exists open subsets $ U_{4},V_{4} \subseteq A $ such that \[\left(-f_{s_{1}s_{2},...,s_{n}}, \sum_{i=2}^{n-1}(-1)^{i}f_{s_{1},...,s_{i}s_{i+1},...s_{n}}\right) \in U_{4}\times V_{4} \subseteq m_{A}^{-1}(V_{3}), \]
		so $ f \in \beta_{\text{cts}}(\{(s_{1}s_{2},...,s_{n})\},-U_{4})  $ and \[\sum_{i=2}^{n-1}(-1)^{i}f_{s_{1},...,s_{i}s_{i+1},...s_{n}}\stackrel{\mathrm{def}}{=} m_{A}\left(f_{s_{1},s_{2}s_{3},...,s_{n}}, \sum_{i=3}^{n-1}(-1)^{i}f_{s_{1},...,s_{i}s_{i+1},...s_{n}}\right) \in V_{4}. \] 
		And again by the continuity of $ m_{A} $, there exists open subsets $ U_{5},V_{5} \subseteq A $ such that \[\left(f_{s_{1},s_{2}s_{3},...,s_{n}}, \sum_{i=3}^{n-1}(-1)^{i}f_{s_{1},...,s_{i}s_{i+1},...s_{n}}\right) \in U_{5}\times V_{5} \subseteq m_{A}^{-1}(V_{4}), \]
		so $ f \in \beta_{\text{cts}}(\{(s_{1},s_{2}s_{3},...,s_{n})\}, U_{5})  $ and \[\sum_{i=3}^{n-1}(-1)^{i}f_{s_{1},...,s_{i}s_{i+1},...s_{n}}\stackrel{\mathrm{def}}{=} m_{A}\left(f_{s_{1},s_{2},s_{3}s_{4},...,s_{n}}, \sum_{i=4}^{n-1}(-1)^{i}f_{s_{1},...,s_{i}s_{i+1},...s_{n}}\right) \in V_{5}. \] 
		Therefore, continuing in this form, and if we denote by $ \textbf{s}_{j}\stackrel{\mathrm{def}}{=} (s_{1},...,s_{j}s_{j+1},...,s_{n}) \in G^{n-1} $, we find that open subset $ U_{j+3},V_{j+3} \subseteq A $ such that
		\[\left(f_{s_{1},...,s_{j}s_{j+1},...,s_{n}}, \sum_{i=j}^{n-1}(-1)^{i}f_{s_{1},...,s_{i}s_{i+1},...s_{n}}\right) \in U_{j+3}\times V_{j+3} \subseteq m_{A}^{-1}(V_{j+2}), \]
		so $ f_{\textbf{s}_{j}} \in (-1)^{j}U_{3+j} $ and \[\sum_{i=j}^{n-1}(-1)^{i}f_{s_{1},...,s_{i}s_{i+1},...s_{n}}\stackrel{\mathrm{def}}{=} m_{A}\left(f_{s_{1},...,s_{j}s_{j+1},...,s_{n}}, \sum_{i=j}^{n-1}(-1)^{i}f_{s_{1},...,s_{i}s_{i+1},...s_{n}}\right) \in V_{j+3}, \]
		for $ j=1,...,n-2 $; therefore, we find 
		\begin{scriptsize}
			\[f \in \left(\cap_{j=1}^{n-2}\beta_{\text{cts}}(\{\textbf{s}_{j}\}, (-1)^{j}U_{3+j})\right) \cap \beta_{\text{cts}}(\{\textbf{s}_{n-1}\}, (-1)^{n-1}V_{n+1}) \cap \beta_{\text{cts}}(\{(s_{2},...,s_{n})\},V_{2}) \cap \beta_{\text{cts}}(\{(s_{1},...,s_{n-1})\},(-1)^{n}U_{3}),  \]          
		\end{scriptsize}
		which is an open subset of $ M_{\text{cts}}(G^{n-1},A)_{\text{po}}  $, and if we take 
		\begin{scriptsize}
			\[g \in \left(\cap_{j=1}^{n-2}\beta_{\text{cts}}(\{\textbf{s}_{j}\}, (-1)^{j}U_{3+j})\right) \cap \beta_{\text{cts}}(\{\textbf{s}_{n-1}\}, (-1)^{n-1}V_{n+1}) \cap \beta_{\text{cts}}(\{(s_{2},...,s_{n})\},V_{2}) \cap \beta_{\text{cts}}(\{(s_{1},...,s_{n-1})\},(-1)^{n}U_{3}),  \]          
		\end{scriptsize}
		then one checks that $ g \in d_{n}^{-1}(\beta_{\text{cts}}(\{\textbf{s}\},V)) $, which show that $ d_{n}^{-1}(\beta_{\text{cts}}(\{\textbf{s}\},V)) $ is an open subset of $ M_{\text{cts}}(G^{n-1},A)_{\text{po}} $ i.e., $ d_{n} $ is a continuous map.     
	\end{proof}
	
	Now since $ A $ is a Hausdorff space, by Lemma \ref{L8} $ M_{\text{cts}}(G^{n},A)_{\text{po}} $ is a Hausdorff topological group for all $ n\geq0 $, so the subset containing the zero map $ \{0\} \subseteq M_{\text{cts}}(G^{n+1},A)_{\text{po}} $ is a closed subset. We define \[Z^{n}_{\text{cts}}(G,A)_{\text{po}} \stackrel{\mathrm{def}}{=} d_{n+1}^{-1}(\{0\})\stackrel{\mathrm{def}}{=} \Ker d_{n+1},\] by the continuity of the homomorphism $ d_{n+1} $, as proved in Lemma \ref{L17}, the subset $ Z^{n}_{\text{cts}}(G,A)_{\text{po}} \subseteq M_{\text{cts}}(G^{n},A)_{\text{po}} $ is closed. In particular, if $ A $ is a compact and evenly continuous with respect to $ G^{n} $, then we deduce, using Lemma \ref{L5}, that both $ Z^{n}_{\text{cts}}(G,A)_{\text{po}} $ and $ M_{\text{cts}}(G^{n},A)_{\text{po}} $ are compact spaces for all $ n\geq 0 $.    
	\begin{proposition}\label{P3}
		Let $ A $ be a $ G $-module. Then for each $ n\geq1 $ we have a map \[ cb_{n} : Z^{n}_{\text{cts}}(G,A)_{\text{po}}\times M_{\text{cts}}(G^{n-1},A)_{\text{po}} \longrightarrow Z^{n}_{\text{cts}}(G,A)_{\text{po}} \ \ \text{given by} \ \  (a,f)\longmapsto a+d_{n}f,  \]
		which defines a continuous (for the product topology on $ Z^{n}_{\text{cts}}(G,A)_{\text{po}}\times M_{\text{cts}}(G^{n-1},A)_{\text{po}} $) right action of $ M_{\text{cts}}(G^{n-1},A)_{\text{po}} $ on $ Z^{n}_{\text{cts}}(G,A)_{\text{po}} $. 
	\end{proposition}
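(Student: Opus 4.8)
The plan is to verify, in order, that $cb_n$ takes its values in $Z^n_{\text{cts}}(G,A)_{\text{po}}$, that it satisfies the two axioms of a right action, and that it is continuous. For well-definedness I would first note that, for $a \in Z^n_{\text{cts}}(G,A)_{\text{po}}$ and $f \in M_{\text{cts}}(G^{n-1},A)_{\text{po}}$, both $a$ and $d_n f$ are continuous maps $G^n \longrightarrow A$ (the latter by Lemma \ref{L13}), hence so is their sum $a + d_n f$, since $M_{\text{cts}}(G^n,A)_{\text{po}}$ is a topological group by Lemma \ref{L8}. Moreover, as $Z^n_{\text{cts}}(G,A)_{\text{po}} = \Ker d_{n+1}$ and $d_{n+1}d_n = 0$,
\[ d_{n+1}(a + d_n f) = d_{n+1}a + d_{n+1}d_n f = 0, \]
so $a + d_n f \in \Ker d_{n+1} = Z^n_{\text{cts}}(G,A)_{\text{po}}$.

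Next I would verify the action axioms using only that $d_n$ is an additive homomorphism (Lemma \ref{L13}): one has $cb_n(a,0) = a + d_n 0 = a$, and for $f,g \in M_{\text{cts}}(G^{n-1},A)_{\text{po}}$,
\[ cb_n(cb_n(a,f),g) = (a + d_n f) + d_n g = a + d_n(f+g) = cb_n(a,f+g). \]
Hence $cb_n$ is a right action of the abelian group $M_{\text{cts}}(G^{n-1},A)_{\text{po}}$ on $Z^n_{\text{cts}}(G,A)_{\text{po}}$.

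For continuity I plan to exhibit $cb_n$ as a composition of continuous maps rather than check it on subbasic open sets by hand. Namely, $cb_n$ is the composite of the map $Z^n_{\text{cts}}(G,A)_{\text{po}} \times M_{\text{cts}}(G^{n-1},A)_{\text{po}} \longrightarrow M_{\text{cts}}(G^n,A)_{\text{po}} \times M_{\text{cts}}(G^n,A)_{\text{po}}$ sending $(a,f)$ to $(a,d_n f)$ --- which is continuous because it is the subspace inclusion $Z^n_{\text{cts}}(G,A)_{\text{po}} \hookrightarrow M_{\text{cts}}(G^n,A)_{\text{po}}$ on the first coordinate and the continuous homomorphism $d_n$ of Lemma \ref{L13} on the second --- followed by the group multiplication $m_{\text{po}}$ of $M_{\text{cts}}(G^n,A)_{\text{po}}$, which is continuous by Lemma \ref{L8}. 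By the well-definedness step the image of this composite lies in $Z^n_{\text{cts}}(G,A)_{\text{po}}$, which carries the subspace topology, so the corestricted map $cb_n$ is continuous. If a self-contained argument is preferred, one can instead mimic the proof of Proposition \ref{P1}: fix a subbasic open set $\beta_{\text{cts}}(\{\textbf{s}\},V)$ with $\textbf{s} = (s_1,\dots,s_n)$, and peel off open neighbourhoods successively using the continuity of $m_A$, $\iota_A$ and $ac$ together with the explicit formula for $d_n f$, exactly as in the proof of Lemma \ref{L13}. I do not anticipate a genuine obstacle; the only points requiring care are invoking the identity $d_{n+1}d_n = 0$ for well-definedness and the continuity of $d_n$ from Lemma \ref{L13}, both of which are already available.
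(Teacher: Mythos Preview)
Your proposal is correct and follows essentially the same approach as the paper: well-definedness via $d_{n+1}d_n=0$, the action axioms from additivity of $d_n$, and continuity by composing the continuous map $d_n$ of Lemma \ref{L13} with the continuous group operation of $M_{\text{cts}}(G^n,A)_{\text{po}}$ from Lemma \ref{L8}. The paper's proof is terser (it simply notes that the group operation and $d_n$ are continuous and that the action property is clear), while you spell out the factorisation and the action axioms explicitly, but the substance is identical.
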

	\begin{proof}
		Fix $ n\geq1 $. Note first that since $ d_{n+1}d_{n}=0 $ and $ Z^{n}_{\text{cts}}(G,A)_{\text{po}} \stackrel{\mathrm{def}}{=} d_{n+1}^{-1}(\{0\}) $, the map $ cb_{n} $ is well-defined. Now It follows from Lemma \ref{L5}, i), and the definition of $ Z^{n}_{\text{cts}}(G,A)_{\text{po}} $, that both $ Z^{n}_{\text{cts}}(G,A)_{\text{po}} $ and $ M_{\text{cts}}(G^{n-1},A)_{\text{po}} $ are Hausdorff topological groups. Therefore, the group operation of $ Z^{n}_{\text{cts}}(G,A)_{\text{po}} $ is a continuous map; since by Lemma \ref{L13} $ d_{n} $ is a continuous map, we conclude that $ cb_{n} $ is a continuous map; finally, it is clear that $ cb_{n} $ is an action of $ M_{\text{cts}}(G^{n-1},A)_{\text{po}} $ on $ Z^{n}_{\text{cts}}(G,A)_{\text{po}} $.    
	\end{proof}
	\begin{corollary}\label{C6}
		Let $ A $ be a $ G $-module, fix $ n\geq 1 $ and an element $ a \in Z_{\text{cts}}^{n}(G,A)_{\text{po}} $, and denote by $ \text{Stab}_{n,A}(a)  $ the subset of elements of $ M_{\text{cts}}(G^{n-1},A)_{\text{po}} $ fixing $ a $. Then the subset $ \text{Stab}_{n,A}(a) \subseteq M_{\text{cts}}(G^{n-1},A)_{\text{po}}  $ is a closed subgroup. In particular, if $ A $ is a compact $ G $-module, which is evenly continuous with respect to $ G^{n-1} $, then $ \text{Stab}_{n,A}(a) \subseteq M_{\text{cts}}(G^{n-1},A)_{\text{po}} $ is a nonempty, compact, and Huasdorff space. 
	\end{corollary}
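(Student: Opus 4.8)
The plan is to follow the pattern of Corollary \ref{C1}, but the abelian situation collapses the argument considerably. First I would record that, under the action $cb_{n}$ of Proposition \ref{P3}, an element $f \in M_{\text{cts}}(G^{n-1},A)_{\text{po}}$ fixes $a$ precisely when $a + d_{n}f = a$. Since $M_{\text{cts}}(G^{n},A)_{\text{po}}$ is a (Hausdorff) topological group by Lemma \ref{L8}, we may cancel $a$, so this is equivalent to $d_{n}f = 0$. Hence
\[
\text{Stab}_{n,A}(a) = \Ker d_{n} = d_{n}^{-1}(\{0\});
\]
in particular $\text{Stab}_{n,A}(a)$ does not actually depend on $a$, and it is a subgroup of $M_{\text{cts}}(G^{n-1},A)_{\text{po}}$ automatically, being the kernel of a group homomorphism.

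Next I would establish closedness. As $A$ is Hausdorff (Definition \ref{D2}), Lemma \ref{L8} (or Lemma \ref{L3}, i)) gives that $M_{\text{cts}}(G^{n},A)_{\text{po}}$ is a Hausdorff topological group, so the singleton $\{0\}$ is closed in it. By Lemma \ref{L13} the differential $d_{n}$ is continuous, whence $d_{n}^{-1}(\{0\}) \subseteq M_{\text{cts}}(G^{n-1},A)_{\text{po}}$ is a closed subset. (Alternatively one could restrict $cb_{n}$ to $\{a\} \times M_{\text{cts}}(G^{n-1},A)_{\text{po}}$, note $\{a\}$ is closed in the Hausdorff space $Z_{\text{cts}}^{n}(G,A)_{\text{po}}$, and pull it back; but the description as $\Ker d_{n}$ is cleaner.) This proves the first assertion.

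For the last statement, assume $A$ is a compact $G$-module which is evenly continuous with respect to $G^{n-1}$. Since $G$ is compact and Hausdorff by the standing hypothesis of this section, $G^{n-1}$ is compact and Hausdorff as a finite product, so Lemma \ref{L5} yields that $M_{\text{cts}}(G^{n-1},A)_{\text{po}}$ is a compact space, and it is Hausdorff by Lemma \ref{L3}, i). A closed subspace of a compact Hausdorff space is itself compact and Hausdorff (cf. \cite[p.~119, 17.14 Theorem. a)]{MR2048350} and \cite[p.~87, 13.8 Theorem. a)]{MR2048350}), so $\text{Stab}_{n,A}(a) = \Ker d_{n}$ is compact and Hausdorff; it is nonempty because the constant zero map belongs to it ($d_{n}0 = 0$). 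I do not expect any genuine obstacle here: the only point deserving a moment's care is the reduction $\text{Stab}_{n,A}(a) = \Ker d_{n}$, which uses cancellability in the group $M_{\text{cts}}(G^{n},A)_{\text{po}}$, and after that everything is a direct application of Lemmas \ref{L13}, \ref{L8}, \ref{L3} and \ref{L5}.
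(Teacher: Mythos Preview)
Your proposal is correct and follows essentially the same route as the paper: identify $\text{Stab}_{n,A}(a)=\Ker d_{n}=Z_{\text{cts}}^{n-1}(G,A)_{\text{po}}$, note this is closed by continuity of $d_{n}$ and Hausdorffness of the target, and then invoke Lemma \ref{L5} for compactness. The paper's version is terser (it absorbs the closedness argument into the discussion preceding Proposition \ref{P3}), but your added detail on cancellation and nonemptiness is harmless and accurate.
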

	\begin{proof}
		Note we have $ \text{Stab}_{n,A}(a)=\Ker d_{n}\stackrel{\mathrm{def}}{=} Z_{\text{cts}}^{n-1}(G,A)_{\text{po}} $ which is a closed subgroup of $ M_{\text{cts}}(G^{n-1},A)_{\text{po}} $. Now if $ A $ is a compact $ G $-module, which is evenly continuous with respect to $ G^{n-1} $, then by Lemma \ref{L5} the space $ M_{\text{ecst}}(G^{n-1},A)_{\text{po}} $ is compact, and since $ Z_{\text{cts}}^{n-1}(G,A)_{\text{po}} $ is a closed subset of a compact space, we conclude that it is also compact cf. \cite[p.~119, 17.5 Theorem. a)]{MR2048350}, now the proof of the corollary follows. 
	\end{proof}
	\begin{corollary}\label{C7}
		Fix $ n\geq 1 $. Let $ A $ be a compact $ G $-module, which is evenly continuous with respect to $ G^{n-1} $, and let $ a \in Z_{\text{cts}}^{n}(G,A)_{\text{po}} $. Let $ \text{Orb}_{n,A}(a) $ be the orbit of the element $ a $ under the action of $ M_{\text{cts}}(G^{n-1},A)_{\text{po}} $. If $ A $ , then the subset $ \text{Orb}_{n,A}(a) \subseteq Z_{\text{cts}}^{n}(G,A)_{\text{po}} $, is a nonempty, compact, and Hausdorff subspace. 
	\end{corollary}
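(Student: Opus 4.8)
The plan is to mirror the proof of Corollary~\ref{C3}, replacing the compact group $A$ acting on $Z^{1}_{\text{cts}}(G,A)_{\text{po}}$ by the compact group $M_{\text{cts}}(G^{n-1},A)_{\text{po}}$ acting on $Z^{n}_{\text{cts}}(G,A)_{\text{po}}$ via $cb_{n}$. First I would record that, under the standing hypotheses of this section ($G$ compact and Hausdorff) together with the assumption that $A$ is a compact $G$-module evenly continuous with respect to $G^{n-1}$, Lemma~\ref{L5} shows that $M_{\text{cts}}(G^{n-1},A)_{\text{po}}$ is compact, while Lemma~\ref{L8} and Lemma~\ref{L3}, i), make it a compact and Hausdorff topological group; moreover $Z^{n}_{\text{cts}}(G,A)_{\text{po}} \subseteq M_{\text{cts}}(G^{n},A)_{\text{po}}$ is a Hausdorff space. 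By Proposition~\ref{P3} this compact group acts continuously on the Hausdorff space $Z^{n}_{\text{cts}}(G,A)_{\text{po}}$, so we are in the setting of \cite[p.~32, 1]{MR0413144}.

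Next I would restrict the continuous action map $cb_{n}$ to the slice $\{a\}\times M_{\text{cts}}(G^{n-1},A)_{\text{po}}$, obtaining a continuous surjection onto $\text{Orb}_{n,A}(a)$ whose fibres are exactly the right cosets of $\text{Stab}_{n,A}(a)$; the latter is a closed subgroup of $M_{\text{cts}}(G^{n-1},A)_{\text{po}}$ by Corollary~\ref{C6}. Hence this restricted map factors through a continuous bijection
\[ \overline{cb}_{n} : \{a\}\times M_{\text{cts}}(G^{n-1},A)_{\text{po}}/\text{Stab}_{n,A}(a) \longrightarrow \text{Orb}_{n,A}(a). \]
The domain is the image of the compact space $M_{\text{cts}}(G^{n-1},A)_{\text{po}}$ under the continuous open quotient map by the closed subgroup $\text{Stab}_{n,A}(a)$, hence compact \cite[p.~119, Theorem 17.1]{MR2048350}, and it is Hausdorff by \cite[p.~38, Theorem 3.1, (1)]{MR0413144}; the target is a subspace of the Hausdorff space $Z^{n}_{\text{cts}}(G,A)_{\text{po}}$, hence Hausdorff by \cite[p.~87, 13.8 Theorem. a)]{MR2048350}. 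A continuous bijection from a compact space to a Hausdorff space is a homeomorphism \cite[p.~123, 17.14 Theorem]{MR2048350}, so $\text{Orb}_{n,A}(a)\cong M_{\text{cts}}(G^{n-1},A)_{\text{po}}/\text{Stab}_{n,A}(a)$ is compact and Hausdorff; it is nonempty since it contains $a$.

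I do not anticipate a genuine obstacle. The only point needing care is the appeal to compactness of $M_{\text{cts}}(G^{n-1},A)_{\text{po}}$: this is precisely why even continuity with respect to $G^{n-1}$ is imposed as a hypothesis, since without it the Ascoli-type argument of Lemma~\ref{L5} is unavailable and the orbit need not be compact. Everything else is a transcription of Corollary~\ref{C3} under the substitutions $A \rightsquigarrow M_{\text{cts}}(G^{n-1},A)_{\text{po}}$, $Z^{1}_{\text{cts}}(G,A)_{\text{po}} \rightsquigarrow Z^{n}_{\text{cts}}(G,A)_{\text{po}}$, and $\text{Stab}_{A}(a) \rightsquigarrow \text{Stab}_{n,A}(a)$.
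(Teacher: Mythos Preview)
Your proposal is correct and follows essentially the same argument as the paper's proof: restrict the continuous action $cb_{n}$ to $\{a\}\times M_{\text{cts}}(G^{n-1},A)_{\text{po}}$, factor through the quotient by the closed stabiliser from Corollary~\ref{C6}, and conclude via the standard compact-to-Hausdorff bijection argument. The only cosmetic difference is that you spell out up front (via Lemmas~\ref{L5}, \ref{L8}, \ref{L3}) why $M_{\text{cts}}(G^{n-1},A)_{\text{po}}$ is a compact Hausdorff group, whereas the paper leaves this implicit in its appeal to Corollary~\ref{C6}.
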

	\begin{proof}
		By Proposition \ref{P3}, the map $ cb_{n}: Z_{\text{cts}}^{n}(G,A)_{\text{po}}\times M_{\text{cts}}(G^{n-1},A)_{\text{po}} \longrightarrow Z_{\text{cts}}^{n}(G,A)_{\text{po}} $ is continuous. Therefore, we obtain a continuous map \[ cb_{\mid_{\{a\} \times M_{\text{cts}}(G^{n-1},A)_{\text{po}} }} : \{a\} \times M_{\text{cts}}(G^{n-1},A)_{\text{po}} \longrightarrow Z_{\text{cts}}^{n}(G,A)_{\text{po}}, \]	
		by restriction of $ cb_{n} $ to subspace $ \{a\} \times M_{\text{cts}}(G^{n-1},A)_{\text{po}} \subseteq Z^{n}(G,A)_{\text{po}}\times M_{\text{cts}}(G^{n-1},A)_{\text{po}} $. In particular, the image of $ cb_{\mid_{\{a\} \times A }}  $ is $ \text{Orb}_{n,A}(a) $. Therefore, we have a surjective continuous map
		\[ cb_{\mid_{\{a\} \times M_{\text{cts}}(G^{n-1},A)_{\text{po}} }} : \{a\} \times M_{\text{cts}}(G^{n-1},A)_{\text{po}} \longrightarrow \text{Orb}_{n,A}(a), \]
		and $ cb_{\mid_{\{a\} \times M_{\text{cts}}(G^{n-1},A)_{\text{po}} }}(a,x)=cb_{\mid_{\{a\} \times M_{\text{cts}}(G^{n-1},A)_{\text{po}} }}(a,y) $ if and only if $ xy^{-1} \in \text{Stab}_{n,A}(a) $. Hence we have a commutative diagram 
		\[ 
		\begin{tikzcd}[row sep=1.5em , column sep=7em]
		\{a\} \times M_{\text{cts}}(G^{n-1},A)_{\text{po}} \arrow[two heads]{d}[]{}\arrow{r}[]{cb_{\mid_{\{a\} \times M_{\text{cts}}(G^{n-1},A)_{\text{po}} }}} & \text{Orb}_{n,A}(a) \arrow[equal]{d}[]{} \\ \{a\} \times M_{\text{cts}}(G^{n-1},A)_{\text{po}}/\text{Stab}_{n,A}(a) \arrow{r}[]{\overline{cb}_{\mid_{\{a\} \times M_{\text{cts}}(G^{n-1},A)_{\text{po}} }}} & \text{Orb}_{n,A}(a).
		\end{tikzcd} \]
		
		By Corollary \ref{C6}, $ \text{Stab}_{n,A}(a) $ is a compact and Hausdorff topological group. Now $ \text{Stab}_{n,A}(a)  $ acts continuously on the right of $ M_{\text{cts}}(G^{n-1},A)_{\text{po}} $, then $ M_{\text{cts}}(G^{n-1},A)_{\text{po}}/\text{Stab}_{n,A}(a) $ is a compact space being continuous image of the compact space $ M_{\text{cts}}(G^{n-1},A)_{\text{po}} $ under the continuous open map $ M_{\text{cts}}(G^{n-1},A)_{\text{po}} \twoheadrightarrow M_{\text{cts}}(G^{n-1},A)_{\text{po}}/\text{Stab}_{n,A}(a) $ cf. \cite[p.~119, Theorem 17.1]{MR2048350}, and $ M_{\text{cts}}(G^{n-1},A)_{\text{po}}/\text{Stab}_{n,A}(a) $ is Hausdorff cf. \cite[p.~38, Theorem 3.1, (1)]{MR0413144}. Since $ Z_{\text{cts}}^{n}(G,A)_{\text{po}} $ is a Hausdorff space, $ \text{Orb}_{n,A}(a) \subseteq Z_{\text{cts}}^{n}(G,A)_{\text{po}} $ is a Hausdorff space cf. \cite[p.~87, 13.8 Theorem. a)]{MR2048350}. Now \[ \overline{cb}_{\mid_{\{a\} \times M_{\text{cts}}(G^{n-1},A)_{\text{po}} }} :\{a\} \times M_{\text{cts}}(G^{n-1},A)_{\text{po}}/\text{Stab}_{n,A}(a)\longrightarrow \text{Orb}_{n,A}(a) \] is a continuous bijection from a compact space to a Hausdorff space, so $ \overline{cb}_{\mid_{\{a\} \times M_{\text{cts}}(G^{n-1},A)_{\text{po}} }} $ is a homeomorphism cf. \cite[p.~123, 17.14 Theorem]{MR2048350}; therefore \[ M_{\text{cts}}(G^{n-1},A)_{\text{po}}/\text{Stab}_{n,A}(a)\cong \{a\} \times M_{\text{cts}}(G^{n-1},A)_{\text{po}}/\text{Stab}_{n,A}(a) \cong \text{Orb}_{n,A}(a). \] 
		
		The above shows $ \text{Orb}_{n,A}(a) $ is a compact and Hausdorff space, it contains $ a $, so it is a nonempty, compact, and Hausdorff space. 
	\end{proof}
	\begin{remark}\label{R6}
		A $ G $-group homomorphism $ \varphi : A\longrightarrow A' $ between $ G $-groups $ A $ and $ A' $ induces a continuous map, for each $ n\geq1 $, $ \varphi_{\ast} : Z^{n}_{\text{cts}}(G,A)_{\text{po}}\longrightarrow Z^{n}_{\text{cts}}(G,A')_{\text{po}} $, and for $ (a,f) \in  Z^{n}(G,A)_{\text{po}}\times M_{\text{cts}}(G^{n-1},A)_{\text{po}} $ we have $ \varphi_{\ast} cb_{n,A}(a,f)=cb_{n,A'}(\varphi_{\ast}(a),\varphi_{\ast}(f)) $ i.e., $ \varphi $ induces the following commutative diagram \[ 
		\begin{tikzcd}[row sep=1.5em , column sep=2em]
		Z^{n}_{\text{cts}}(G,A)_{\text{po}}\times M_{\text{cts}}(G^{n-1},A)_{\text{po}} 	\arrow[]{d}[]{cb_{n,A}}\arrow{r}[]{\varphi_{\ast} \times \varphi_{\ast} } & Z^{n}_{\text{cts}}(G,A')_{\text{po}}\times M_{\text{cts}}(G^{n-1},A')_{\text{po}} \arrow{d}[]{cb_{n,A'}} \\ Z^{n}_{\text{cts}}(G,A)_{\text{po}} \arrow{r}[]{\varphi_{\ast}} & Z^{n}_{\text{cts}}(G,A')_{\text{po}},
		\end{tikzcd} \]
		which follows from Lemma \ref{L1} and properties of $ \varphi $.
	\end{remark}
	
	Now the $ n $-th continuous cohomology group of $ G $ with coefficients in $ A $ can be defined as the quotient space of $ Z_{\text{cts}}^{n}(G,A)_{\text{po}} $, by the action of $ M_{\text{cts}}(G^{n-1},A)_{\text{po}} $ i.e., \[ H_{\text{cts}}^{n}(G,A)_{\text{po}}\stackrel{\mathrm{def}}{=} Z_{\text{cts}}^{n}(G,A)_{\text{po}}/M_{\text{cts}}(G^{n-1},A)_{\text{po}}, \ \ \text{for} \  n\geq1 . \] 
	For each $ n\geq1 $, the set $ H_{\text{cts}}^{n}(G,A)_{\text{po}} $ is endowed with the quotient topology; in particular, the natural surjective homomorphism \[\pi_{n,A} : Z_{\text{cts}}^{n}(G,A)_{\text{po}}\longrightarrow H_{\text{cts}}^{n}(G,A)_{\text{po}}, \] which sends an element $ a \in Z_{\text{cts}}^{n}(G,A)_{\text{po}} $ to its orbit under the action of $ M_{\text{cts}}(G^{n-1},A)_{\text{po}}  $ is a continuous open map cf. \cite[p.~37]{MR0413144}, also $ H_{\text{cts}}^{n}(G,A)_{\text{po}} $ is a topological group. 
	\begin{remark}\label{R7}
		Fix $ n\geq1 $. Then by Remark \ref{R6} a $ G $-group homomorphism $ \varphi : A\longrightarrow A' $ between $ G $-groups $ A $ and $ A' $ induces a commutative \[ 
		\begin{tikzcd}[row sep=1.5em , column sep=2em]
		Z^{n}_{\text{cts}}(G,A)_{\text{po}}\times M_{\text{cts}}(G^{n-1},A)_{\text{po}}  \arrow[]{d}[]{cb_{n,A}}\arrow{r}[]{\varphi_{\ast} \times \varphi_{\ast} } & Z^{n}_{\text{cts}}(G,A')_{\text{po}}\times M_{\text{cts}}(G^{n-1},A')_{\text{po}}  \arrow{d}[]{cb_{n,A'}} \\ Z^{n}_{\text{cts}}(G,A)_{\text{po}} \arrow{r}[]{\varphi_{\ast}} & Z^{n}_{\text{cts}}(G,A')_{\text{po}}.
		\end{tikzcd} \]
		Therefore, $ \varphi $ induces a commutative diagram
		\[ 
		\begin{tikzcd}[row sep=1.5em , column sep=2.0em]
		Z^{n}_{\text{cts}}(G,A)_{\text{po}} \arrow[two heads]{d}[]{\pi_{n,A}}\arrow{r}[]{\varphi_{\ast} } & Z^{n}_{\text{cts}}(G,A')_{\text{po}} \arrow[two heads]{d}[]{\pi_{n,A'}} \\ H^{n}_{\text{cts}}(G,A)_{\text{po}} \arrow{r}[]{\overline{\varphi}_{\ast}} & H^{n}_{\text{cts}}(G,A')_{\text{po}},
		\end{tikzcd} \]
		where for $ [a] \in H^{n}_{\text{cts}}(G,A)_{\text{po}} $ we set $ \overline{\varphi}_{\ast}([a])\stackrel{\mathrm{def}}{=} [\varphi_{\ast}(a)] $. The map $ \overline{\varphi}_{\ast} $ is continuous, which one can check with similar calculation as done in Remark \ref{R4}. 
		
		Similarly, if $ \psi : H\longrightarrow G $ is a continuous group homomorphism, one obtains a commutative diagram of continuous maps
		\[ 
		\begin{tikzcd}[row sep=1.5em , column sep=2.0em]
		Z^{n}_{\text{cts}}(G,A)_{\text{po}} \arrow[two heads]{d}[]{\pi_{n,A}}\arrow{r}[]{\psi^{\ast}} & Z^{n}_{\text{cts}}(H,A)_{\text{po}} \arrow[two heads]{d}[]{\pi_{n,A}} \\ H^{n}_{\text{cts}}(G,A)_{\text{po}} \arrow{r}[]{\overline{\psi}^{\ast}} & H^{n}_{\text{cts}}(H,A)_{\text{po}},
		\end{tikzcd} \] 
		where for $ [a] \in H^{n}_{\text{cts}}(G,A)_{\text{po}} $ we set $ \overline{\psi}^{\ast}([a])\stackrel{\mathrm{def}}{=} [\psi^{\ast}(a)] $. 	  
	\end{remark}
	\begin{lemma}\label{L14}
		Let $ A $ be a compact $ G $-module, fix $ n\geq1 $ and assume $ A $ is evenly continuous with respect to $ G^{n-1} $. Then the following facts hold:
		\begin{enumerate}[i)]
			\item The topological group $ H_{\text{cts}}^{n}(G,A)_{\text{po}} $ is Hausdorff.
			\item The topological group $ Z_{\text{cts}}^{n}(G,A)_{\text{po}} $ is compact if and only if the topological groups $ H_{\text{cts}}^{n}(G,A)_{\text{po}} $ is compact; in particular if $ A $ is evenly continuous with respect to $ G^{n-1} $, then $ H_{\text{cts}}^{n}(G,A)_{\text{po}} $ is a compact space.	
		\end{enumerate} 
	\end{lemma}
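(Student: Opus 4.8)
\textbf{Proof proposal for Lemma \ref{L14}.} The plan is to recognise this as a quotient-by-a-compact-group situation and invoke the same machinery used for Lemma \ref{L9}. First I would record the relevant ambient facts. Since $G$ is compact and Hausdorff (the standing hypothesis of Section \ref{S4}), the power $G^{n-1}$ is compact and Hausdorff by Tychonoff cf. \cite[p.~120, 17.8 Theorem (Tychonoff)]{MR2048350} together with \cite[p.~87, 13.8 Theorem. b)]{MR2048350}. As $A$ is a compact $G$-module which is evenly continuous with respect to $G^{n-1}$, Lemma \ref{L5} gives that $M_{\text{cts}}(G^{n-1},A)_{\text{po}}$ is a compact space; it is Hausdorff by Lemma \ref{L3} i), and it is a topological group by Lemma \ref{L8}. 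Likewise $M_{\text{cts}}(G^{n},A)_{\text{po}}$ is Hausdorff by Lemma \ref{L3} i), hence so is its subspace $Z^{n}_{\text{cts}}(G,A)_{\text{po}}=\Ker d_{n+1}$.

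Next I would invoke Proposition \ref{P3}: the map $cb_{n}$ is a continuous right action of the compact Hausdorff topological group $M_{\text{cts}}(G^{n-1},A)_{\text{po}}$ on the Hausdorff space $Z^{n}_{\text{cts}}(G,A)_{\text{po}}$. This is exactly the situation of \cite[p.~32, 1]{MR0413144}. Then i) is \cite[p.~38, Theorem 3.1, 1)]{MR0413144}, which says that the orbit space of a Hausdorff space by a compact group action is Hausdorff; and the equivalence ``$Z^{n}_{\text{cts}}(G,A)_{\text{po}}$ compact $\iff$ $H^{n}_{\text{cts}}(G,A)_{\text{po}}$ compact'' in ii) is \cite[p.~38, Theorem 3.1, 4)]{MR0413144} (one direction is also immediate since $\pi_{n,A}$ is a continuous surjection).

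For the ``in particular'' clause I would argue as follows: assuming in addition that $A$ is evenly continuous with respect to $G^{n}$, Lemma \ref{L5} shows $M_{\text{cts}}(G^{n},A)_{\text{po}}$ is a compact space. By Lemma \ref{L13} the homomorphism $d_{n+1}$ is continuous, and since $A$ is Hausdorff the singleton $\{0\}\subseteq M_{\text{cts}}(G^{n+1},A)_{\text{po}}$ is closed, so $Z^{n}_{\text{cts}}(G,A)_{\text{po}}=d_{n+1}^{-1}(\{0\})$ is a closed subset of the compact space $M_{\text{cts}}(G^{n},A)_{\text{po}}$, hence compact cf. \cite[p.~120, 17.5 Theorem. a)]{MR2048350}. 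Applying ii) (or directly, as the continuous image under $\pi_{n,A}$ of a compact space cf. \cite[p.~119, Theorem 17.1]{MR2048350}), $H^{n}_{\text{cts}}(G,A)_{\text{po}}$ is compact. I do not anticipate a genuine obstacle here; the only point requiring care is bookkeeping of the evenly-continuous hypotheses ($G^{n-1}$ versus $G^{n}$) so that Lemma \ref{L5} is applied to the correct power of $G$, and ensuring the compact-group action hypothesis of \cite{MR0413144} is met, which is precisely what Proposition \ref{P3} and Lemma \ref{L5} supply.
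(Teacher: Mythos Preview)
Your proof is correct and follows essentially the same route as the paper: check that $M_{\text{cts}}(G^{n-1},A)_{\text{po}}$ is a compact Hausdorff topological group acting continuously via $cb_{n}$ on the Hausdorff space $Z^{n}_{\text{cts}}(G,A)_{\text{po}}$, then invoke \cite[Theorem 3.1]{MR0413144} for both (i) and (ii). Your treatment of the ``in particular'' clause is in fact more careful than the paper's own, which writes $G^{n-1}$ there (already the standing hypothesis, and insufficient to make $Z^{n}_{\text{cts}}(G,A)_{\text{po}}$ compact); you correctly require even continuity with respect to $G^{n}$ so that Lemma \ref{L5} applies to $M_{\text{cts}}(G^{n},A)_{\text{po}}$.
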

	\begin{proof}
		By Proposition \ref{P3} the cohomology map $ cb_{n} $ is continuous. Since $ A $ is compact $ G $-module, which is evenly continuous with respect to $ G^{n-1} $, by Lemma \ref{L3}, i), and Lemma \ref{L5}, the space $ M_{\text{cts}}(G^{n-1},A)_{\text{po}} $ is a compact and Hausdorff space, also $ Z_{\text{cts}}^{n}(G,A)_{\text{po}} $ is a Hausdorff group. Now $ M_{\text{cts}}(G^{n-1},A)_{\text{po}} $ is a compact and Hausdorff topological group, which acts continuously on a Hausdorff space $ Z_{\text{cts}}^{n}(G,A)_{\text{po}} $. Therefore, $ Z_{\text{cts}}^{n}(G,A)_{\text{po}} $ is a $ M_{\text{cts}}(G^{n-1},A)_{\text{po}}  $-space with $ M_{\text{cts}}(G^{n-1},A)_{\text{po}}  $ a compact space, which matches the definition of \cite[p.~32, 1]{MR0413144}. Now i) follows from \cite[p.~39, 3.1 Theorem, (1)]{MR0413144}, and ii) follows from \cite[p.~39, 3.1 Theorem, (4)]{MR0413144}; in particular, if $ A $ is evenly continuous with respect to $ G^{n-1} $, then the space $ Z_{\text{cts}}^{n}(G,A)_{\text{po}} $ is a compact space, which implies that the space $ H_{\text{cts}}^{n}(G,A)_{\text{po}} $ is compact.
	\end{proof} 
	
	Now we prove Theorem \ref{T2}. The proof is similar the proof of Theorem \ref{T1}, and it is established using 3 lemmas; by showing there exists a well-defined natural continuous bijection $ \Theta_{n} $.
	\begin{lemma}\label{L15}
		Under the assumption of Theorem \ref{T2}, there exists a well-defined continuous map \[ \Theta_{n} : H^{n}_{\text{cts}}(G,A)_{\text{po}} \longrightarrow \varprojlim_{\stackrel{ \ }{r\in R}} H^{n}_{\text{cts}}(G,A_{r})_{\text{po}}  \ \ \text{given by} \ \ [a]\longmapsto ([\varphi_{r\ast}(a)]), \] such that following diagram is commutative
		\[ 
		\begin{tikzcd}[row sep=1.5em , column sep=2.0em]
		Z_{\text{cts}}^{n}(G,A)_{\text{po}} \arrow[two heads]{d}[]{\pi_{n,A}}\arrow{r}[]{\theta_{n}} & \varprojlim_{\stackrel{ \ }{r\in R}} Z_{\text{cts}}^{n}(G,A_{r})_{\text{po}} \arrow{d}[]{\varprojlim_{\stackrel{ \ }{r\in R}}\pi_{n,A_{r}}} \\ H^{n}_{\text{cts}}(G,A)_{\text{po}} \arrow{r}[]{\Theta_{n}} & \varprojlim_{\stackrel{ \ }{r\in R}} H^{n}_{\text{cts}}(G,A_{r})_{\text{po}}.
		\end{tikzcd} \]
	\end{lemma}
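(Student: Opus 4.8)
The plan is to follow the proof of Lemma \ref{L10} almost verbatim, with the nonabelian cohomology map $cb$ and the quotient of $Z^{1}_{\text{cts}}$ by $A$ replaced throughout by the abelian cohomology map $cb_{n}$ of Proposition \ref{P3} and the quotient of $Z^{n}_{\text{cts}}(G,A)_{\text{po}}$ by $M_{\text{cts}}(G^{n-1},A)_{\text{po}}$; the map $\Theta_{n}$ will then drop out of the universal property of the inverse limit. First I would record that for $r\geq t$ the transition maps $\varphi_{rt}\colon A_{r}\to A_{t}$ are continuous $G$-module homomorphisms, so Remark \ref{R7} supplies commutative squares linking $\pi_{n,A_{r}}$ and $\pi_{n,A_{t}}$ through $\varphi_{rt\ast}$ and $\overline{\varphi}_{rt\ast}$; hence $\{Z^{n}_{\text{cts}}(G,A_{r})_{\text{po}},\varphi_{rt\ast},R\}$ and $\{H^{n}_{\text{cts}}(G,A_{r})_{\text{po}},\overline{\varphi}_{rt\ast},R\}$ are inverse systems of topological groups, whose inverse limits exist.

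Next, the natural projections $\varphi_{r}\colon A\to A_{r}$ are $G$-module homomorphisms, so Remark \ref{R7} gives, for each $r\in R$, a commutative square
\[
\begin{tikzcd}[row sep=1.5em , column sep=2.0em]
Z^{n}_{\text{cts}}(G,A)_{\text{po}} \arrow[two heads]{d}[]{\pi_{n,A}}\arrow{r}[]{\varphi_{r\ast}} & Z^{n}_{\text{cts}}(G,A_{r})_{\text{po}} \arrow[two heads]{d}[]{\pi_{n,A_{r}}} \\ H^{n}_{\text{cts}}(G,A)_{\text{po}} \arrow{r}[]{\overline{\varphi}_{r\ast}} & H^{n}_{\text{cts}}(G,A_{r})_{\text{po}},
\end{tikzcd} \]
and these are compatible with the transition maps of both systems above. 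Passing to the inverse limit over $R$ then produces the asserted commutative square, with top arrow $\theta_{n}$ (the map induced by the $\varphi_{r\ast}$) and a unique bottom arrow $\Theta_{n}$, necessarily given by $[a]\mapsto([\varphi_{r\ast}(a)])$.

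It remains to see that the maps in the square behave well. By Lemma \ref{L4} with $B=G^{n}$, the natural map $M_{\text{cts}}(G^{n},A)_{\text{po}}\to\varprojlim_{r\in R}M_{\text{cts}}(G^{n},A_{r})_{\text{po}}$ is a continuous bijection, indeed a homeomorphism here since $A$ is compact and each $A_{r}$ is Hausdorff. Because the differential $d_{n+1}$ is natural in the coefficient module (equivalently, by Remark \ref{R6} $\varphi_{r\ast}$ carries $Z^{n}_{\text{cts}}(G,A)_{\text{po}}=\Ker d_{n+1}$ into $Z^{n}_{\text{cts}}(G,A_{r})_{\text{po}}$), this homeomorphism restricts to the closed subspace $Z^{n}_{\text{cts}}(G,A)_{\text{po}}$; and given $(a_{r})\in\varprojlim_{r\in R}Z^{n}_{\text{cts}}(G,A_{r})_{\text{po}}$ its unique preimage $a$ satisfies $\varphi_{r\ast}(d_{n+1}a)=d_{n+1}a_{r}=0$ for all $r$, whence $d_{n+1}a=0$ by the injectivity of $M_{\text{cts}}(G^{n+1},A)_{\text{po}}\to\varprojlim_{r\in R}M_{\text{cts}}(G^{n+1},A_{r})_{\text{po}}$ from Lemma \ref{L4}. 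So $\theta_{n}$ is a continuous bijection (in fact a homeomorphism onto $\varprojlim_{r\in R}Z^{n}_{\text{cts}}(G,A_{r})_{\text{po}}$), and $\varprojlim_{r\in R}\pi_{n,A_{r}}$ is continuous as an inverse limit of continuous maps.

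Finally, $\Theta_{n}$ is well defined since $a=b+d_{n}f$ for some $f\in M_{\text{cts}}(G^{n-1},A)_{\text{po}}$ forces $\varphi_{r\ast}a=\varphi_{r\ast}b+d_{n}(\varphi_{r\ast}f)$, hence $[\varphi_{r\ast}a]=[\varphi_{r\ast}b]$ for every $r$; and it is continuous by the argument of Remark \ref{R4}: for open $V\subseteq\varprojlim_{r\in R}H^{n}_{\text{cts}}(G,A_{r})_{\text{po}}$ the commutative square gives $\pi_{n,A}^{-1}(\Theta_{n}^{-1}(V))=\theta_{n}^{-1}\big((\varprojlim_{r\in R}\pi_{n,A_{r}})^{-1}(V)\big)$, which is open, and since $\pi_{n,A}$ is a continuous open surjection $\Theta_{n}^{-1}(V)=\pi_{n,A}\big(\pi_{n,A}^{-1}(\Theta_{n}^{-1}(V))\big)$ is open. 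I expect the only step requiring genuine attention is the identification of the image of $\theta_{n}$ with $\varprojlim_{r\in R}Z^{n}_{\text{cts}}(G,A_{r})_{\text{po}}$, which rests on the naturality of $d_{n+1}$ together with the injectivity half of Lemma \ref{L4}; the rest is the same formal diagram-chase as in Lemma \ref{L10}.
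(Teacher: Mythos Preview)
Your proof is correct and follows essentially the same approach as the paper's own proof of Lemma~\ref{L15}: both build the inverse systems via Remark~\ref{R7}, pass to the limit to obtain the commutative square, invoke Lemma~\ref{L4} for $\theta_{n}$, and use the openness of $\pi_{n,A}$ for the continuity of $\Theta_{n}$. If anything, you supply more detail than the paper does---in particular the explicit verification that the image of $\theta_{n}$ is exactly $\varprojlim_{r\in R}Z^{n}_{\text{cts}}(G,A_{r})_{\text{po}}$ via the naturality of $d_{n+1}$ and the injectivity half of Lemma~\ref{L4}---which the paper leaves implicit.
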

	\begin{proof}
		Since $ \varphi_{rt}: A_{r}\longrightarrow A_{t} $ for $ r\geq t $ are continuous $ G $-group homomorphisms, by Remark \ref{R7} they induce a commutative diagram of continuous maps
		\[ 
		\begin{tikzcd}[row sep=1.5em , column sep=2.0em]
		Z_{\text{cts}}^{n}(G,A_{r})_{\text{po}} \arrow[two heads]{d}[]{\pi_{n,A_{r}}}\arrow{r}[]{\varphi_{rt\ast}} & Z_{\text{cts}}^{n}(G,A_{t})_{\text{po}} \arrow[two heads]{d}[]{\pi_{A_{t}}} \\ H^{n}_{\text{cts}}(G,A_{r})_{\text{po}} \arrow{r}[]{\overline{\varphi}_{rt\ast}} & H^{n}_{\text{cts}}(G,A_{t})_{\text{po}}.
		\end{tikzcd} \] 
		Note $ \varphi_{rt} $ are transition maps, and so $ \{H^{n}_{\text{cts}}(G,A_{r})_{\text{po}}, \overline{\varphi}_{rt\ast}, R \} $ is an inverse system of topological groups. 
		
		Now for $ r \in R $ the continuous maps $ \varphi_{r}: A\longrightarrow A_{r} $ are $ G $-group homomorphisms, so by Remark \ref{R7} they induces a commutative diagrams of continuous maps
		\[ 
		\begin{tikzcd}[row sep=1.5em , column sep=2.0em]
		Z_{\text{cts}}^{n}(G,A)_{\text{po}} \arrow[two heads]{d}[]{\pi_{n,A}}\arrow{r}[]{\varphi_{r\ast}} & Z_{\text{cts}}^{n}(G,A_{r})_{\text{po}} \arrow[two heads]{d}[]{\pi_{n,A_{r}}} \\ H^{n}_{\text{cts}}(G,A)_{\text{po}} \arrow{r}[]{\overline{\varphi}_{r\ast}} & H^{n}_{\text{cts}}(G,A_{r})_{\text{po}},
		\end{tikzcd} \]
		compatible with transition maps of the inverse systems $ \{Z^{n}_{\text{cts}}(G,A_{r})_{\text{po}}, \varphi_{rt\ast}, R \} $ and $ \{H^{n}_{\text{cts}}(G,A_{r})_{\text{po}}, \overline{\varphi}_{rt\ast}, R \} $. Taking the inverse limit of the above diagram we obtain a unique commutative diagram 
		\[ 
		\begin{tikzcd}[row sep=1.5em , column sep=2.0em]
		Z_{\text{cts}}^{n}(G,A)_{\text{po}} \arrow[two heads]{d}[]{\pi_{n,A}}\arrow{r}[]{\theta_{n}} & \varprojlim_{\stackrel{ \ }{r\in R}} Z_{\text{cts}}^{n}(G,A_{r})_{\text{po}} \arrow{d}[]{\varprojlim_{\stackrel{ \ }{r\in R}}\pi_{n,A_{r}}} \\ H^{n}_{\text{cts}}(G,A)_{\text{po}} \arrow{r}[]{\Theta_{n}} & \varprojlim_{\stackrel{ \ }{r\in R}} H^{n}_{\text{cts}}(G,A_{r})_{\text{po}}.
		\end{tikzcd} \] 
		This shows that the map $ \Theta_{n} $ is well-defined and is given by $ [a]\longmapsto ([\varphi_{r\ast}(a)]) $ for $ [a] \in H^{n}_{\text{cts}}(G,A)_{\text{po}} $. Now it follows from Lemma \ref{L4} that the map $ \theta_{n} $ is continuous (a homeomorphism in fact since $ A $ is compact), and $ \varprojlim_{\stackrel{ \ }{r\in R}}\pi_{n,A_{r}} $ is a continuous map (being inverse limit of continuous maps), also since $ \pi_{n,A} $ is an open map, we find that $ \Theta_{n} $ is a continuous map. 
	\end{proof} 
	\begin{lemma}\label{L16}
		Under the assumption of Theorem \ref{T2} the continuous map \[ \Theta_{n} : H^{n}_{\text{cts}}(G,A)_{\text{po}} \longrightarrow \varprojlim_{\stackrel{ \ }{r\in R}} H^{n}_{\text{cts}}(G,A_{r})_{\text{po}}, \] 
		defined in Lemma \ref{L15}, is injective.
	\end{lemma}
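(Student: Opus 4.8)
The plan is to mimic, almost verbatim, the proof of Lemma \ref{L11}, replacing the compact group $A_r$ acting on $Z^1_{\text{cts}}(G,A_r)_{\text{po}}$ by the compact group $M_{\text{cts}}(G^{n-1},A_r)_{\text{po}}$ acting on $Z^n_{\text{cts}}(G,A_r)_{\text{po}}$ via $cb_{n,A_r}$, and invoking Corollary \ref{C6} in place of Corollary \ref{C1} and Lemma \ref{L4} (with $B=G^{n-1}$) in place of its $B=G$ instance.

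First I would take $[a],[b]\in H^n_{\text{cts}}(G,A)_{\text{po}}$ with $\Theta_n([a])=([\varphi_{r\ast}a])=([\varphi_{r\ast}b])=\Theta_n([b])$. By definition of the quotient by the action of $M_{\text{cts}}(G^{n-1},A_r)_{\text{po}}$, for each $r\in R$ there is $f_r\in M_{\text{cts}}(G^{n-1},A_r)_{\text{po}}$ with $\varphi_{r\ast}a=\varphi_{r\ast}b+d_n f_r$. Set
\[ S_r=\{\,\widetilde f\in M_{\text{cts}}(G^{n-1},A_r)_{\text{po}} : \varphi_{r\ast}a=\varphi_{r\ast}b+d_n\widetilde f\,\}. \]
Since $d_n(\widetilde f-f_r)=0$ precisely when $\widetilde f\in S_r$, the set $S_r$ is the coset $f_r+\Ker d_n=f_r+\text{Stab}_{n,A_r}(\varphi_{r\ast}b)$, and $\text{Stab}_{n,A_r}(\varphi_{r\ast}b)=\Ker d_n=Z^{n-1}_{\text{cts}}(G,A_r)_{\text{po}}$. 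As $A_r$ is a compact $G$-module evenly continuous with respect to $G^{n-1}$, Corollary \ref{C6} tells us this stabiliser is a nonempty, compact, Hausdorff subgroup of the topological group $M_{\text{cts}}(G^{n-1},A_r)_{\text{po}}$; translating by $f_r$ (a homeomorphism of a topological group) shows $S_r$ itself is nonempty, compact, and Hausdorff.

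Next I would verify inverse-system compatibility. Using that the $G$-module homomorphisms $\varphi_{rt}$ commute with the differentials, $\varphi_{rt\ast}d_n=d_n\varphi_{rt\ast}$ (this is exactly what underlies Remark \ref{R6}, following from Lemma \ref{L1} and the fact that $\varphi_{rt}$ respects both the addition and the $G$-action), for $r\geq t$ and $\widetilde f\in S_r$ one computes
\[ \varphi_{t\ast}b+d_n(\varphi_{rt\ast}\widetilde f)=\varphi_{rt\ast}\bigl(\varphi_{r\ast}b+d_n\widetilde f\bigr)=\varphi_{rt\ast}\varphi_{r\ast}a=\varphi_{t\ast}a, \]
so $\varphi_{rt\ast}\widetilde f\in S_t$. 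Hence $\{S_r,\varphi_{rt\ast},R\}$ is an inverse system of nonempty compact Hausdorff spaces, and therefore $\varprojlim_{r\in R}S_r$ is nonempty by \cite[p.~4, Proposition 1.1.4]{MR2599132}. Picking $f\in\varprojlim_{r\in R}S_r\hookrightarrow\varprojlim_{r\in R}M_{\text{cts}}(G^{n-1},A_r)_{\text{po}}$ and applying Lemma \ref{L4} with $B=G^{n-1}$, there is a unique $\widetilde f\in M_{\text{cts}}(G^{n-1},A)_{\text{po}}$ with $\varphi_{r\ast}\widetilde f=f_r$ for all $r$. Then $\theta_n(b+d_n\widetilde f)=(\varphi_{r\ast}b+d_n f_r)=(\varphi_{r\ast}a)=\theta_n(a)$, and since $\theta_n$ is injective (see Lemma \ref{L15}, resp.\ Lemma \ref{L4}) we conclude $a=b+d_n\widetilde f$, i.e.\ $[a]=[b]$.

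I do not expect a serious obstacle here, since the argument is structurally identical to Lemma \ref{L11}. The only two points needing a word of justification are the identification of $S_r$ as a coset of $Z^{n-1}_{\text{cts}}(G,A_r)_{\text{po}}$ (so that its compact Hausdorff-ness is delivered by Corollary \ref{C6}, which in turn rests on the even-continuity hypothesis of Theorem \ref{T2}), and the naturality identity $\varphi_{rt\ast}d_n=d_n\varphi_{rt\ast}$ that makes $\{S_r\}$ an inverse system; everything else is the standard nonemptiness of an inverse limit of nonempty compact Hausdorff spaces together with the injectivity of $\theta_n$.
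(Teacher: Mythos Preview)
Your proposal is correct and follows essentially the same approach as the paper's own proof: identify the fibres $S_r$ as cosets of $\text{Stab}_{n,A_r}(\varphi_{r\ast}b)=Z^{n-1}_{\text{cts}}(G,A_r)_{\text{po}}$, use Corollary~\ref{C6} and the even-continuity hypothesis to see they are nonempty compact Hausdorff, check the inverse-system compatibility via $\varphi_{rt\ast}d_n=d_n\varphi_{rt\ast}$, and conclude by nonemptiness of the inverse limit together with the injectivity of $\theta_n$ from Lemma~\ref{L4}. The only cosmetic issue is that you reuse the symbol $f_r$ both for your initial choices and for the components of the element $f\in\varprojlim S_r$; the paper avoids this by working directly with $f$ via the identification $\varprojlim M_{\text{cts}}(G^{n-1},A_r)_{\text{po}}\cong M_{\text{cts}}(G^{n-1},A)_{\text{po}}$.
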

	\begin{proof}
		Let $ [a], [b] \in H^{n}_{\text{cts}}(G,A)_{\text{po}} $, and suppose \[ \Theta_{n}([a])=([\varphi_{r}a])=\Theta_{n}([b])=([\varphi_{r}b]). \] Then for all $ r \in R $ there exists $ f_{r} \in M_{\text{cts}}(G^{n-1},A_{r})_{\text{po}} $ such that $ \varphi_{r}a=\varphi_{r}b + d_{n}f_{r} $. Therefore, the set $ S_{n,r}=\{\widetilde{f} \in M_{\text{cts}}(G^{n-1},A_{r})_{\text{po}}: \varphi_{r}a=\varphi_{r}b+ d_{n}\widetilde{f} \} $ is nonempty for all $ r \in R $. Now $ S_{n,r} $ is equal to the coset $ \text{Stab}_{n,A_{r}}(\varphi_{r}b)+f_{r} $, where $ \text{Stab}_{n,A_{r}}(\varphi_{r}b) $ is the stabiliser of $ \varphi_{r}b $ for the action of $ M_{\text{cts}}(G^{n-1},A_{r})_{\text{po}} $ on $ Z_{\text{cts}}^{n}(G,A_{r})_{\text{po}} $. The set $ \text{Stab}_{n,A_{r}}(\varphi_{r}b) \subseteq M_{\text{cts}}(G^{n-1},A_{r})_{\text{po}} $ is a closed, compact, and Hausdorff subgroup by Corollary \ref{C6}.
		Therefore, $ S_{n,r}=\text{Stab}_{n,A_{r}}(\varphi_{r}b) + f_{r} $ is nonempty, compact, and Hausdorff for each $ r \in R $. In particular, if $ r\geq t $ and $ \widetilde{f} \in S_{n,r} $, then \[ \varphi_{t\ast}(b) + d_{n}\varphi_{rt\ast}(\widetilde{f})=\varphi_{rt\ast}\varphi_{r\ast}(b)  +d_{n}\varphi_{rt\ast}(\widetilde{f})=\varphi_{rt\ast}(\varphi_{r\ast}(b)  +d_{n}\widetilde{f})=\varphi_{rt\ast}\varphi_{r\ast}(a)=\varphi_{t\ast}(a), \]
		so $ \varphi_{rt\ast}(\widetilde{f}) \in S_{n,t} $. Hence, the set $ \{S_{n,r}, \varphi_{rt\ast}, R\} $ is an inverse system of nonempty, compact, and Hausdorff spaces. Therefore, $ \varprojlim_{\stackrel{ \ }{r \in R}} S_{r} $ is nonempty, compact, and Hausdorff cf. \cite[p.~4, Proposition 1.1.4]{MR2599132}. Now for \[ f \in \varprojlim_{\stackrel{ \ }{r \in R}} S_{r} \hookrightarrow \varprojlim_{\stackrel{ \ }{r \in R}} M_{\text{cts}}(G^{n-1},A_{r})_{\text{po}} \cong M_{\text{cts}}(G^{n-1},A)_{\text{po}},\] one has \[ \varphi_{r\ast}( b+d_{n}f)=\varphi_{r\ast}(b) \ +d_{n}\varphi_{r\ast}(f)=\varphi_{r\ast}(a) \] for all $ r \in R $, so $ \theta_{n}(b+d_{n}f)=\theta_{n}(a) $, and since it follows from Lemma \ref{L4} that the map $ \theta_{n} $ is injective, we must have $ b+d_{n}f=a $; therefore $ [a]=[b] $, and this proves that the map $ \Theta_{n} $ is injective. 
	\end{proof}
	\begin{lemma}\label{L17}
		Under the assumption of Theorem \ref{T2}. Then the continuous map \[ \Theta_{n} : H^{n}_{\text{cts}}(G,A)_{\text{po}} \longrightarrow \varprojlim_{\stackrel{ \ }{r\in R}} H^{n}_{\text{cts}}(G,A_{r})_{\text{po}}, \] 
		defined in Lemma \ref{L15}, is surjective.
	\end{lemma}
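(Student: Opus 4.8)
The plan is to follow the proof of Lemma \ref{L12} essentially verbatim, with the compact group $ A_{r} $ acting on $ Z^{1}_{\text{cts}}(G,A_{r})_{\text{po}} $ replaced by the compact group $ M_{\text{cts}}(G^{n-1},A_{r})_{\text{po}} $ acting on $ Z^{n}_{\text{cts}}(G,A_{r})_{\text{po}} $ via $ cb_{n} $, and with Corollary \ref{C3} replaced by Corollary \ref{C7}. Recall from Lemma \ref{L15} that the surjective continuous maps $ \pi_{n,A_{r}}:Z^{n}_{\text{cts}}(G,A_{r})_{\text{po}}\longrightarrow H^{n}_{\text{cts}}(G,A_{r})_{\text{po}} $ are compatible with the transition maps of the inverse systems $ \{Z^{n}_{\text{cts}}(G,A_{r})_{\text{po}},\varphi_{rt\ast},R\} $ and $ \{H^{n}_{\text{cts}}(G,A_{r})_{\text{po}},\overline{\varphi}_{rt\ast},R\} $, and that $ \theta_{n} $ is a bijection (indeed a homeomorphism) since $ A $ is compact.

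Fix $ ([a_{r}])\in\varprojlim_{r\in R}H^{n}_{\text{cts}}(G,A_{r})_{\text{po}} $. For each $ r\in R $ the fibre $ \pi_{n,A_{r}}^{-1}([a_{r}]) $ equals the orbit $ \text{Orb}_{n,A_{r}}(a_{r}) $ of $ a_{r} $ under $ M_{\text{cts}}(G^{n-1},A_{r})_{\text{po}} $; since each $ A_{r} $ is a compact $ G $-module evenly continuous with respect to $ G^{n-1} $, Corollary \ref{C7} shows that $ \text{Orb}_{n,A_{r}}(a_{r}) $ is a nonempty, compact, and Hausdorff space. For $ r\geq t $, the identity $ \pi_{n,A_{t}}\varphi_{rt\ast}=\overline{\varphi}_{rt\ast}\pi_{n,A_{r}} $ together with $ \overline{\varphi}_{rt\ast}([a_{r}])=[a_{t}] $ shows that $ \varphi_{rt\ast} $ carries $ \text{Orb}_{n,A_{r}}(a_{r}) $ into $ \text{Orb}_{n,A_{t}}(a_{t}) $, so $ \{\text{Orb}_{n,A_{r}}(a_{r}),\varphi_{rt\ast},R\} $ is an inverse system of nonempty, compact, and Hausdorff spaces. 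Hence $ \varprojlim_{r\in R}\text{Orb}_{n,A_{r}}(a_{r}) $ is nonempty, compact, and Hausdorff cf. \cite[p.~4, Proposition 1.1.4]{MR2599132}.

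Finally, choose $ a\in\varprojlim_{r\in R}\text{Orb}_{n,A_{r}}(a_{r})\hookrightarrow\varprojlim_{r\in R}Z^{n}_{\text{cts}}(G,A_{r})_{\text{po}} $ and put $ [b]=\pi_{n,A}\bigl(\theta_{n}^{-1}(a)\bigr)\in H^{n}_{\text{cts}}(G,A)_{\text{po}} $, which is legitimate as $ \theta_{n} $ is bijective. By the commutative square of Lemma \ref{L15}, $ \Theta_{n}([b])=\Theta_{n}\pi_{n,A}\theta_{n}^{-1}(a)=\bigl(\varprojlim_{r\in R}\pi_{n,A_{r}}\bigr)(a)=([a_{r}]) $, the last equality holding because each component of $ a $ lies in $ \pi_{n,A_{r}}^{-1}([a_{r}]) $. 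Therefore $ \Theta_{n} $ is surjective. The only step requiring attention is checking that the transition maps respect the orbits so that $ \{\text{Orb}_{n,A_{r}}(a_{r})\} $ genuinely forms an inverse system — a short diagram chase — after which the nonemptiness of the inverse limit of nonempty compact Hausdorff spaces, which is the crux of surjectivity, is immediate; no substantive obstacle arises beyond assembling Corollary \ref{C7}, Lemma \ref{L15}, and \cite[p.~4, Proposition 1.1.4]{MR2599132}.
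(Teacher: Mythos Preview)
Your proof is correct and follows essentially the same approach as the paper: both argue that the fibres $\pi_{n,A_{r}}^{-1}([a_{r}])=\text{Orb}_{n,A_{r}}(a_{r})$ are nonempty compact Hausdorff by Corollary~\ref{C7}, form an inverse system, and hence have nonempty inverse limit, from which a preimage under $\Theta_{n}$ is obtained via $\pi_{n,A}\circ\theta_{n}^{-1}$. If anything, your write-up is slightly more explicit than the paper's in verifying that the transition maps respect the orbits.
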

	\begin{proof}
		Recall we have surjective continuous maps \[\pi_{n,A_{r}}: Z^{n}_{\text{cts}}(G,A_{r})_{\text{po}} \longrightarrow H^{n}_{\text{cts}}(G,A_{r})_{\text{po}}  \] 
		for  $ r \in R $, which are compatible with the transition maps of $ \{Z^{n}_{\text{cts}}(G,A_{r})_{\text{po}}, \varphi_{rt\ast}, R \} $ and $ \{H^{n}_{\text{cts}}(G,A_{r})_{\text{po}}, \overline{\varphi}_{rt\ast}, R \} $. Let $ ([a_{r}]) \in \varprojlim_{\stackrel{ \ }{r\in R}} H^{n}_{\text{cts}}(G,A_{r})_{\text{po}} $. Then since $ A_{r} $ is compact and Hausdorff and evenly continuous with respect to $ G^{n-1} $ for each $ r \in R $, the set $ \pi_{n,A_{r}}^{-1}([a_{r}])=\text{Orb}_{n,A_{r}}(a_{r}) $ is a nonempty, compact, and Hausdorff space for each $ r \in R $ by Corollary \ref{C7}. In particular the set $ \{\text{Orb}_{n,A_{r}}(a_{r}), \overline{\varphi}_{rt\ast}, R\} $ forms an inverse system of nonempty, compact, and Hausdorff spaces. Therefore, $ \varprojlim_{\stackrel{ \ }{r \in R}} \text{Orb}_{n,A_{r}}(a_{r}) $ is nonempty, compact, and Hausdorff cf. \cite[p.~4, Proposition 1.1.4]{MR2599132}. Now for \[ a \in \varprojlim_{\stackrel{ \ }{r \in R}}\text{Orb}_{n,A_{r}}(a_{r}) \hookrightarrow \varprojlim_{\stackrel{ \ }{r \in R}} Z^{n}_{\text{cts}}(G,A_{r})_{\text{po}} \] we have $ \pi_{n,A}\theta^{-1}(a) \in H^{n}_{\text{cts}}(G,A)_{\text{po}} $ and $ \Theta_{n}\pi_{n,A}\theta_{n}^{-1}(a)=([a_{r}]) $; this proves that the map $ \Theta_{n} $ is surjective. 
	\end{proof}
	
	Now we provide the proof of the Theorem \ref{T2}. We recall the statement of Theorem \ref{T2} again.
	\begin{theorem}[]
		Let $ A $ a compact $ G $-module and fix $ n\geq 1 $. Assume $ A $ has a presentation $ A= \varprojlim_{\stackrel{ \ }{r\in R}}A_{r} $, where $ A_{r} $ is evenly continuous with respect to $ G^{n-1} $ for all $ r\in R $. Denote by $ \varphi_{r}: A\longrightarrow A_{r} $ the natural projections. Then there exists a continuous bijection \[ \Theta_{n} : H_{\text{cts}}^{n}(G,A)_{\text{po}} \longrightarrow \varprojlim_{\stackrel{ \ }{r\in R}} H_{\text{cts}}^{n}(G,A_{r})_{\text{po}} \ \ \text{given by} \ \ [a]\longmapsto ([\varphi_{r\ast}(a)]). \]
		In particular, if $ A $ is evenly continuous with respect to $ G^{n} $, then $ \Theta_{n} $ is a homeomoprhism.
	\end{theorem}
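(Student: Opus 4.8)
The plan is to obtain Theorem \ref{T2} by combining Lemmas \ref{L15}, \ref{L16}, and \ref{L17} in exactly the way Theorem \ref{T1} was obtained from Lemmas \ref{L10}--\ref{L12}. First I would invoke Lemma \ref{L15}, which already furnishes the well-defined continuous map $\Theta_n : H^n_{\text{cts}}(G,A)_{\text{po}} \to \varprojlim_{r\in R} H^n_{\text{cts}}(G,A_r)_{\text{po}}$ together with the commutative square relating it, through the vertical quotient maps $\pi_{n,A}$ and $\varprojlim_{r\in R}\pi_{n,A_r}$, to the map $\theta_n : Z^n_{\text{cts}}(G,A)_{\text{po}} \to \varprojlim_{r\in R} Z^n_{\text{cts}}(G,A_r)_{\text{po}}$ of Lemma \ref{L4} (which is a homeomorphism because $A$ is compact and each $A_r$ is Hausdorff). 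Then Lemma \ref{L16} gives injectivity of $\Theta_n$ and Lemma \ref{L17} gives surjectivity; hence $\Theta_n$ is the asserted natural continuous bijection, and it is given on classes by $[a]\longmapsto([\varphi_{r\ast}(a)])$ by construction.

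For the homeomorphism statement I would argue by compactness versus Hausdorffness. Each $A_r$ is evenly continuous with respect to $G^{n-1}$, so Lemma \ref{L14}, i), makes each $H^n_{\text{cts}}(G,A_r)_{\text{po}}$ Hausdorff, whence the inverse limit $\varprojlim_{r\in R} H^n_{\text{cts}}(G,A_r)_{\text{po}}$, being a subspace of $\prod_{r\in R}H^n_{\text{cts}}(G,A_r)_{\text{po}}$, is Hausdorff. Now assume additionally that $A$ is evenly continuous with respect to $G^n$; since $G$ is compact and Hausdorff, so is $G^n$, and Lemma \ref{L5} then yields that $M_{\text{cts}}(G^n,A)_{\text{po}}$ is compact. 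Its closed subset $Z^n_{\text{cts}}(G,A)_{\text{po}}$ is therefore compact, and so its continuous image $H^n_{\text{cts}}(G,A)_{\text{po}} = Z^n_{\text{cts}}(G,A)_{\text{po}}/M_{\text{cts}}(G^{n-1},A)_{\text{po}}$ under the open continuous map $\pi_{n,A}$ is compact (this is also Lemma \ref{L14}, ii)). A continuous bijection from a compact space to a Hausdorff space is a homeomorphism, so $\Theta_n$ is a homeomorphism in this case.

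The genuine content of the theorem is not in this assembly but in the commutation of the quotient by the $M_{\text{cts}}(G^{n-1},A_r)_{\text{po}}$-actions with the inverse limit, which is already settled by Lemmas \ref{L16} and \ref{L17}. The crux of surjectivity is Corollary \ref{C7}: each fibre $\pi_{n,A_r}^{-1}([a_r]) = \text{Orb}_{n,A_r}(a_r)$ is nonempty, compact, and Hausdorff, so $\{\text{Orb}_{n,A_r}(a_r),\varphi_{rt\ast},R\}$ is an inverse system whose limit is nonempty, and a point of that limit is transported through $\theta_n^{-1}$ to a cocycle over $A$ mapping to the given compatible family. The crux of injectivity is Corollary \ref{C6}: the set $S_{n,r}$ of $\widetilde f \in M_{\text{cts}}(G^{n-1},A_r)_{\text{po}}$ with $\varphi_{r\ast}a = \varphi_{r\ast}b + d_n\widetilde f$ is a coset of the closed compact Hausdorff subgroup $\text{Stab}_{n,A_r}(\varphi_{r\ast}b)$, so $\{S_{n,r},\varphi_{rt\ast},R\}$ again has nonempty inverse limit, and the injectivity of $\theta_n$ forces the two cocycles over $A$ to be cohomologous. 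Granting these inputs the remaining argument is a formal diagram chase; the only subtlety worth flagging is that it is even continuity with respect to $G^n$ — not merely with respect to $G^{n-1}$ — that is needed to make the source compact and thereby upgrade the continuous bijection $\Theta_n$ to a homeomorphism.
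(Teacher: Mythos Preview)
Your proposal is correct and follows essentially the same route as the paper: invoke Lemma~\ref{L15} for the well-defined continuous map $\Theta_n$, Lemmas~\ref{L16} and~\ref{L17} for injectivity and surjectivity, then use Lemma~\ref{L14}~i) to make the target Hausdorff and Lemma~\ref{L5} together with Lemma~\ref{L14}~ii) to make the source compact under the extra even-continuity hypothesis on $G^n$, concluding via the compact-to-Hausdorff homeomorphism criterion. Your additional commentary recapitulating the mechanisms behind Lemmas~\ref{L16} and~\ref{L17} is accurate but not needed for the assembly argument itself.
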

	\begin{proof}
		By Lemma \ref{L15} there exists a commutative diagram of continuous maps
		\[ 
		\begin{tikzcd}[row sep=1.5em , column sep=2.0em]
		Z^{n}(G,A)_{\text{co}} \arrow[two heads]{d}[]{\pi_{n,A}}\arrow{r}[]{\theta_{n}} & \varprojlim_{\stackrel{ \ }{r\in R}} Z^{n}(G,A_{r})_{\text{co}} \arrow{d}[]{\varprojlim_{\stackrel{ \ }{r\in R}}\pi_{n,A_{r}}} \\ H^{n}_{cts}(G,A) \arrow{r}[]{\Theta_{n}} & \varprojlim_{\stackrel{ \ }{r\in R}} H^{n}_{cts}(G,A_{r}),
		\end{tikzcd} \]
		by Lemma \ref{L16} the map $ \Theta_{n} $ is injective, and by Lemma \ref{L17} the map $ \Theta_{n} $ is surjective. Therefore, $ \Theta_{n} $ is a continuous bijection. Using Lemma \ref{L14}, i), we deduce that $ \varprojlim_{\stackrel{ \ }{r\in R}} H^{n}_{\text{cts}}(G,A_{r}) $ is a Hausdorff space for each $ r \in R $. In particular, if $ A $ is evenly continuous with respect to $ G^{n} $, then using Lemma \ref{L5} and Lemma \ref{L14}, ii), we have that $ H^{n}_{\text{cts}}(G,A) $ is a compact space; therefore, $ \Theta_{n} $ is a continuous bijection from a compact space to a Hausdorff space, which implies that  $ \Theta_{n} $ is a homeomorphism cf. \cite[p.~123, 17.14 Theorem]{MR2048350}. 
	\end{proof}
	\subsection{One Application of The Second Main Theorem}\label{SB7} 
	One immediate application of the Theorem \ref{T2} is the following. Assume $ G $ is a profinite group.
	\begin{corollary}\label{C8}
		Let $ A $ be a profinite $ G $-module. Then $ A $ is an inverse limit of finite discrete $ G $-modules i.e., $ A $ has a presentation $ A\cong\varprojlim_{\stackrel{ \ }{r\in R}}A_{r} $ satisfying the assumptions of Theorem \ref{T2}, and if $ A_{r} $ is evenly continuous with respect to $ G^{n-1} $, then there exists a continuous bijection \[ \Theta_{n} : H_{\text{cts}}^{n}(G,A)_{\text{po}} \longrightarrow \varprojlim_{\stackrel{ \ }{r\in R}} H_{\text{cts}}^{n}(G,A_{r})_{\text{po}} \ \ \text{given by} \ \ [a]\longmapsto ([\varphi_{r,\ast}(a)]); \]
		which is homeomorphism if $ A $ is evenly continuous with respect to $ G^{n} $.    
	\end{corollary}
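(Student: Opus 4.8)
The plan is to deduce the statement directly from Theorem \ref{T2}; the only work not already contained in that theorem is to exhibit the presentation $A\cong\varprojlim_{r\in R}A_{r}$ by finite discrete $G$-modules and to check that it fits the inverse-system framework fixed at the start of Section \ref{S4}. So the first and essentially only substantive step is: produce a cofinal family of open $G$-submodules of $A$ with trivial intersection. Granting this, I would take $R$ to be that family ordered by reverse inclusion — it is directed, since the intersection of two open $G$-submodules is again an open $G$-submodule — and set $A_{r}=A/N_{r}$; the quotient maps $\varphi_{rt}\colon A/N_{r}\twoheadrightarrow A/N_{t}$ for $N_{r}\subseteq N_{t}$ are $G$-equivariant surjections, each $A_{r}$ is a finite discrete $G$-module, hence a compact Hausdorff topological $G$-module, and $\bigcap_{r}N_{r}=1$ gives $A\cong\varprojlim_{r\in R}A_{r}$; thus the hypotheses preceding Theorem \ref{T2} are satisfied. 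Adding the stated hypothesis that each $A_{r}$ is evenly continuous with respect to $G^{n-1}$, Theorem \ref{T2} furnishes the continuous bijection $\Theta_{n}$, and the further hypothesis that $A$ is evenly continuous with respect to $G^{n}$ gives, again by Theorem \ref{T2}, that $\Theta_{n}$ is a homeomorphism.

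For the cofinality step I would argue as follows. Since $A$ is profinite, the open subgroups of $A$ form a neighbourhood basis of $1$ with $\bigcap_{U}U=1$ (by total disconnectedness and Hausdorffness) cf.\ \cite{MR2599132}, so it suffices to find inside each open subgroup $U\leqslant A$ an open $G$-submodule. Fix such a $U$. For each $u\in U$, continuity of $ac\colon G\times A\longrightarrow A$ at $(1,u)$ together with $ac(1,u)=u\in U$ yields open neighbourhoods $W_{u}\ni 1$ in $G$ and $V_{u}\ni u$ in $A$ with $ac(W_{u}\times V_{u})\subseteq U$; as $U$ is closed in the compact space $A$ it is compact, so finitely many $V_{u_{1}},\dots,V_{u_{k}}$ cover $U$, and $W:=\bigcap_{i}W_{u_{i}}$ is a neighbourhood of $1$ in $G$ with $WU\subseteq U$. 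Choosing an open subgroup $H$ of $G$ contained in $W$ (possible since $G$ is profinite) gives $HU\subseteq U$, whence $HU=U$ because $H$ is a group; consequently ${}^{g}U$ depends only on the coset $gH$, and since $[G\colon H]<\infty$ the set $N_{U}:=\bigcap_{g\in G}{}^{g}U=\bigcap_{gH\in G/H}{}^{g}U$ is a finite intersection of open subgroups, hence open, and it is manifestly $G$-invariant, i.e.\ an open $G$-submodule of $A$ contained in $U$. This is the module analogue of Proposition \ref{P2}, with ``$G$-submodule'' in place of ``characteristic subgroup''.

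The main (and essentially the only) obstacle is precisely the compactness–continuity argument just sketched, producing for each open $U\leqslant A$ an open subgroup $H\leqslant G$ with ${}^{H}U=U$; everything after that is bookkeeping built on Theorem \ref{T2}. I would also flag two points that explain the form of the statement. First, Proposition \ref{P2} itself cannot be invoked here, because an abelian profinite group need not possess a cofinal family of open characteristic subgroups — for instance $\prod_{i\in\mathbb{N}}\mathbb{Z}/p$ has no proper open characteristic subgroup — so the presentation genuinely must be assembled from $G$-submodules rather than characteristic subgroups. Second, the even continuity conditions on the $A_{r}$ and on $A$ are real hypotheses, not automatic consequences of profiniteness: a finite discrete $G$-module need not be evenly continuous with respect to the profinite, non-discrete space $G^{n-1}$, which is why these conditions are carried over verbatim from Theorem \ref{T2} into the statement of the corollary.
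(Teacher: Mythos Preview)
Your proof is correct and follows the same overall strategy as the paper: exhibit a presentation of $A$ by finite discrete $G$-modules and then invoke Theorem \ref{T2}. The difference is that the paper disposes of the presentation step in one line by citing \cite[p.~170, Lemma 5.3.3, (c)]{MR2599132}, whereas you reprove that lemma from scratch via the compactness--continuity argument producing a cofinal family of open $G$-submodules. Your version is thus more self-contained, and your side remarks --- that Proposition \ref{P2} is inapplicable because abelian profinite groups need not admit a cofinal system of open \emph{characteristic} subgroups (your example $\prod_{\mathbb{N}}\mathbb{Z}/p$ is apt), and that the even-continuity hypotheses are genuine and not automatic --- are accurate observations that the paper does not make explicit.
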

	\begin{proof}
		It follows from \cite[p.~170, Lemma 5.3.3, (c)]{MR2599132} that $ A $ is an inverse limit of finite discrete $ G $-modules; therefore has a presentation $ A\cong \varprojlim_{\stackrel{ \ }{r\in R}}A_{r} $ by finite discrete $ G $-modules; and the corollary can be deduced from Theorem \ref{T2}. 
	\end{proof}
	
	Recall the theorem \cite[p.~141, (2.7.5) Theorem]{MR2392026}, which states that if $ G $ is a profinite group, and $ A $ is a compact $ G $-module having a presentation $ A=\varprojlim_{n \in \mathbb{N}}A_{n} $ as a countable inverse limit of finite discrete $ G $-modules $ A_{n} $, then there exist a natural exact sequence  \[ 
	\begin{tikzcd}[row sep=1.2em , column sep=1.2em]
	1 \arrow{r}[]{}\arrow{r}{}[swap]{} & \varprojlim_{n \in \mathbb{N}}^{1}H^{i-1}(G,A_{n})   \arrow{r}[]{} & H^{i}_{\text{cts}}(G,A) \arrow{r}[]{} & \varprojlim_{n \in \mathbb{N}}H^{i}(G,A_{n}) \arrow{r}[]{} & 1. 
	\end{tikzcd} \] 
	Therefore, Corollary \ref{C7} shows that if $ A_{n} $ is evenly continuous with respect to $ G^{i-1} $ for all $ n \in \mathbb{N} $, then $ \varprojlim_{n \in \mathbb{N}}^{1}H^{i-1}(G,A_{n}) =1 $.
\bibliographystyle{alpha} 
\bibliography{KNZccepot}
\end{document}